\documentclass[a4paper,11pt]{article}
\usepackage{xcolor}
\usepackage{epsfig}
\usepackage{amsmath}
\usepackage{amssymb}
\usepackage{amsthm}
\usepackage{mathtools}
\usepackage{a4wide}
\usepackage{microtype}
\usepackage{lmodern}
\usepackage[utf8]{inputenc}
\usepackage{float}
\usepackage{bm}
\usepackage{verbatim}
\setlength{\parindent}{0pt}

\newcommand{\ds}{\displaystyle }

\newtheorem{thm}{Theorem}[section]
\newtheorem{Proposition}[thm]{Proposition}
\newtheorem{cor}[thm]{Corollary}

\newtheorem{rmk}[thm]{Remark}
\newtheorem{defin}[thm]{Definition}

\newcommand{\beq}{\begin{equation}}
\newcommand{\eeq}{\end{equation}}

%
%
%
%

\begin{document}

\author{Giuseppe D'Onofrio$^a$, Thomas M. Michelitsch$^b$ \\ Federico Polito$^c$, 
Alejandro P. Riascos$^d$ 
\\[1ex]
\footnotesize{$^a$ Department of Mathematical Sciences, Politecnico di Torino, Italy}  \\
\footnotesize{E-mail: giuseppe.donofrio@polito.it}\\[1ex]
\footnotesize{$^b$ Sorbonne Université, CNRS, Institut Jean Le Rond d’Alembert, F-75005 Paris, France } \\
\footnotesize{E-mail: michel@lmm.jussieu.fr}\\[1ex]
\footnotesize{$^c$ Department of Mathematics ``G.\ Peano'', University of Torino, Italy}  \\
\footnotesize{E-mail: federico.polito@unito.it}\\[1ex]
\footnotesize{$^d$ Departamento de Física, Universidad Nacional de Colombia, Bogot\'a, Colombia}  \\
\footnotesize{E-mail: alperezri@unal.edu.co} 
}

\title{On discrete-time arrival processes and related random motions}
 
\maketitle

\begin{abstract}
\noindent We consider three kinds of  discrete-time arrival processes: transient, intermediate
and recurrent, characterized by a finite, possibly finite and infinite number of events, respectively. 
%
%
In this framework, we study renewal processes which are externally stopped at an independent stopping time which may be defective or non-defective.
For defective stopping time, the resulting arrival process is of intermediate nature.
For non-defective stopping time, the resulting arrival process
is transient, i.e.\! stopped almost surely.
For these processes we obtain finite time and asymptotic properties. 
%
%
Particular attention is devoted to the class of transient renewal processes, that is, renewal processes with defective interarrival times. Among these, we consider two examples: \lq\lq Defective Bernoulli Process'' and the \lq\lq Defective Sibuya Process\rq\rq.
We validate some analytical results using Monte Carlo simulations.
%
%
%
%
%
%
We apply these results
to biased and unbiased random walks on the $d$-dimensional infinite lattice and as a special case on the two-dimensional triangular lattice. We study the spatial propagator of the walker
and its large time asymptotics. 
In particular, we observe the emergence of a superdiffusive (ballistic) behavior in the case of biased walks.
For geometrically distributed stopping times, the propagator converges to a stationary non-equilibrium steady state (NESS), which is universal in the sense that it is independent of the stopped process.
In dimension one, for both light- and heavy-tailed step distributions, the NESS has an integral representation involving alpha-stable distributions.

\bigskip

\noindent \textit{MSC2020:} 60G50; 60K05; 60K15; 05C81

\noindent \textit{Keywords:} Defective random variables; transient renewal processes; discrete-time semi-Markov processes; random walks; 
non-equilibrium steady state (NESS).

\end{abstract}
\newpage
\tableofcontents
\section{Introduction}
\label{Intro}

The concept of renewal process provides a simple and powerful model to describe the occurrence in time of random sequences of events. The basic assumption is that the random time intervals between the events (also known as waiting times) are independent and identically distributed (IID).
The simplest models consider waiting times that are exponentially or geometrically distributed in the continuous- and discrete-time setting, respectively. These models are in fact the classical homogeneous Poisson process and the Bernoulli process, both characterized by the Markov property.
Already in the mid-twentieth century, the Markov property was relaxed by considering IID non-exponentially distributed waiting times, generating semi-Markov analogs of the above processes (see e.g.~\cite{cox}).
Given the vastness of the relative literature it would be futile to aim at being exhaustive. Hence, we limit ourselves to recalling some recent related papers: \cite{borovkov,PolitoCahoy2013,godreke,GorenfloMainardi2008,Laskin2003,MiRiaFract-calc2020,MichelitschPolitoRiascos2021}.

On the other hand, the very much related concept of continuous-time random walks (CTRWs), in fact a subclass of continuous-time semi-Markov processes, was introduced in the fundamental work by Montroll and Weiss \cite{MontrollWeiss1968}. Since then, it has been extensively developed in different directions. Among the many papers on the topic we refer to the following:  \cite{KutnerMasoliver2017,MeerschertSikorski2019,MeerschertStraka2014,MetzlerKlafter2000}.

Remarkably, as described in the previously cited papers, models of random motions based on CTRWs proved to be able to explain anomalous diffusion phenomena, for instance, by assuming asymptotically power-law waiting times with fat-tails.

Similarly to the theory in continuous time, semi-Markov generalizations of discrete-time processes and in general semi-Markov chains have been developed as well:
see for instance \cite{BarbuLimnios2008,limnios_2023, PachonPolitoRicciuti2021}, or the semi-Markov generalizations of the discrete-time telegrapher's process, see \cite{SRW2022,GSD-Squirrel-walk2023,mirko2}, to cite a few. 

The works mentioned above consider recurrent renewal processes with IID continuous or discrete waiting times, that is they comprise infinitely many events almost surely. There are, however, classes of arrival processes (processes modeling the occurrence of events) that are transient, that is where only a finite number of events occur almost surely in an infinitely long observation time span. Such processes may sometimes arise as renewal processes which are stopped by an external event independent of the renewal process itself. 

Transient arrival processes are ubiquitous in nature. Real-world examples include the annual migrations of birds or droves which are suddenly stopped or modified by the occurrence of natural catastrophes (collapse or destruction of food resources due to climate change, storms, wildfires, earthquakes, volcanic eruptions, etc.) \cite{Carey2009}. Indeed, the occurrence of catastrophic events may force species to change regular habits (such as the recurrent annual migration for foraging) and have been identified as major driving forces that boost evolution \cite{Grant-etal2016}.
Furthermore, transient processes are also  appropriate models in other fields: in population dynamics \cite{def_galton}, or in finance and gambling where a sequence of independent probabilistic trials, or games, are externally stopped by random or deterministic events (for instance by a stopping rule) \cite{czarna_palm,limnios_2023}.

Although transient arrival processes are widespread phenomena, their appearance in the literature is relatively rare compared to their recurrent counterparts,
for which a well established framework exists \cite{Feller1964,Levy1956,PachonPolitoRicciuti2021}.
Consult \cite[p.19]{BarbuLimnios2008}, for a brief discussion of some features of transient discrete-time renewal processes governed by defective interarrival times.
%
%
In particular, a model for transient Markov processes appears in the paper by Latouche et al. \cite{Latouche-et-al2003} in terms of a random walk model on a two-dimensional infinite graph. The authors derived expressions for the distributions of the absorbing state and time in which the arrival process is stopped. A further model on the distributions of stopping times in compound and renewal processes has been studied by Zacks et al. \cite{Zacks-etal1999}. The authors obtain explicit expressions in the case in which a Poisson process crosses an upper boundary, and introduce an approach for a series of similar problems for compound Poisson processes.

In this work we analyze discrete-time arrival processes which we classify as transient, recurrent
or intermediate, based on having finitely many events with probability one, zero or belonging
to $(0,1)$, respectively. For instance, a discrete-time renewal process with defective IID waiting
times is transient. 
We also consider the class of recurrent discrete-time renewal processes
which are stopped (or absorbed) by an independent random variable, which may be defective. If the stopping time is defective, the resulting process is of
intermediate type, otherwise it is transient.
We investigate the rich dynamics arising from these three types of arrival processes, which
seem to have widespread applications in random walks, anomalous transport and in a cross-disciplinary fields.
%
%
%
%
%
%
In particular, for the cases of defective and non-defective geometrically-distributed absorbing time we conduct a thorough analysis by deriving the discrete probability density function of the resulting processes, their moments together with their asymptotic behavior. A similar analysis is done in the case of stopping times drawn from a discrete version of completely monotone distributions. To this end, we evoke a discrete variant of Bernstein's theorem, 
see for instance \cite{jedidi}.
As a byproduct, we show that if a Bernoulli process is stopped by a further independent, possibly defective, Bernoulli process, i.e. with geometric defective or non-defective stopping time then the resulting dynamics is non-Markovian, and so are the associated time-changed random walks.

Indeed, we make use of such a construction to define some related biased and unbiased random walks in the $d$-dimensional infinite lattice. Properties of recurrent, transient, and intermediate arrival processes are inherited by these random walks via stochastic time-change.
For them, we deduce the basic properties, such as the spatial probability density function of the position of the walker. 
Particular attention is given to the intermediate random walks where the stopping time follows a Bernoulli process with defective waiting times. If this walk is biased we observe the emergence of an asymptotic superdiffusive-ballistic diffusion whose fluctuations reach their maximum when the defect equals $1/2$. If, instead, the walk is unbiased the long-time asymptotics is of a diffusive nature.
We further derive a stationary infinite-time propagator (the NESS) which emerges in a well-scaled continuous space limit as long as the defectivity vanishes. Recall, the concept of NESS frequently arises in the context of stochastic resetting \cite{Barkai-etal-2023,dicre_tel,Eule-et-al2016,Sandev-et-al2022}. Here, the interpretation of the NESS is that it represents the spatial PDF of the rescaled end point of the eventually stopped sample path. This NESS  is universal in the sense that it is of the same kind, irrespectively of the underlying stopped process. These results hold true for both biased and unbiased walks with light- or heavy-tailed step distributions. In particular, in dimension one, the NESS has an integral representation involving alpha-stable distributions.

The paper is organized as follows. 
After a brief recap of some basic properties of discrete-time renewal processes and of defective distributions (Section \ref{preliminaries}),  
we introduce a classification scheme of arrival processes.
As a first example of 
transient arrival processes, we consider defective renewal processes (Section \ref{transient_renewal}).
These are renewal processes in discrete time with IID defective interarrival times. We then consider two special cases, the Defective Bernoulli
process (DBP) (Section \ref{DBP}), and the Defective Sibuya process (DSP) (Section \ref{DSP}), for which we obtain the state probabilities in explicit form and conduct an asymptotic analysis for the event counting process. We also empirically validate these results with Monte Carlo simulations.
In Section \ref{appendix_c}, we summarize some essential features of transient renewal processes.
%
%
Section \ref{pertinent_GFs} is devoted to  the analysis of discrete-time renewal processes stopped at a random time instant. The stopping time is independent of the renewal process and is either defective or non-defective. 
We analyze several cases of  transient and of intermediate types. 
Among the transient ones we consider Bernoulli and Sibuya renewal processes stopped by a non-defective stopping time. For these, we perform sample path simulations to validate the analytical results. The end of Section \ref{pertinent_GFs} is devoted to intermediate arrival processes
which emerge when a recurrent  renewal process is geometrically stopped with a defective stopping time. We then study the case in which the stopped process is a Bernoulli process.
Finally a particular interesting intermediate arrival process emerges when a Bernoulli process is stopped with a defective Sibuya stopping time.
For this arrival process, we obtain (Section \ref{defective_Sibuya_stop}) explicit formulae for the first and second moment of the event counting process. In the limit of a non-defective  Sibuya stopping time this arrival process becomes transient and
the moments diverge for large times with a specific power law behavior.
%
%
%
%
%
%
\\
Random walks governed by all the three kinds of the considered arrival processes are addressed in Section \ref{RWs}. We focus on general biased and unbiased random walks in $d$-dimensional lattices. We explore the classes of random walks whose features are a direct consequence of the classification we introduced in Section \ref{classification}.  
In this regard, the explicit results of Section \ref{pertinent_GFs} allow the application on biased and unbiased random walks in triangular lattices. Finally, the reader might be interested also in the Appendix \ref{type_I_limit}: we describe walks generated by an arbitrary recurrent renewal process with non-defective geometrically distributed stopping time and we look at the behavior of its NESS.

\section{Preliminaries}
\label{preliminaries}
\subsection{Discrete-time renewal processes}
\label{DTRW}

Let us briefly recall the notion of discrete-time renewal (counting) processes
${\cal R}(t) \in \mathbb{N}_0 =\{0,1,\dots\}$ (see e.g.\ \cite{BarbuLimnios2008,ADTRW2021,PachonPolitoRicciuti2021}). First consider the renewal chain process
\beq
\label{renewal_chain}
J_n=\sum_{j=1}^n \Delta t_j, \qquad n \in \mathbb{N}=\{1,2,\dots\}, \hspace{0.5cm} J_0=0,
\eeq
where the increments $\Delta t_j  \in \mathbb{N}$ 
(`interarrival times'  or `waiting times' in the renewal picture) are IID copies of $\Delta t$ characterized by the probability density function (PDF)\footnote{We also use the term `waiting time density' and employ these terms for discrete-and continuous times.}
\beq
\label{first_arrival}
\mathbb{P}[\Delta t=t]= \psi(t) , \hspace{1cm}  t\in \mathbb{N}.
\eeq
Now define
\beq
\label{discrete-time}
{\cal R}(t) = \max(n\geq 0: J_n \leq t)  , \hspace{1cm} {\cal R}(0)=0 , \hspace{1cm} t \in\mathbb{N}_0.
\eeq
%
%
%
%
The process (\ref{discrete-time}) counts the events up to and including time $t$, and it is
the inverse of the discrete subordinator \eqref{renewal_chain}. 
%
%
%
%
%
It is useful to introduce the generating function (GF) of the waiting time density:
\beq
\label{gen_fu}
\mathbb{E} u^{\Delta t} = {\overline \psi}(u) = \sum_{t=1}^{\infty}\psi(t) u^t , \hspace{1cm} |u| \leq 1.
\eeq
Note that ${\overline \psi}(u)\big|_{u=1}=1$ reflects the normalization of the PDF (\ref{first_arrival}). Recall also that ${\overline \psi}(u)\big|_{u=0}= \psi(0)=0$ due to the strict positivity of the waiting times $\Delta t$.

In the following we will denote the GF (or discrete Laplace transform if $u=e^{-s}$) of a discrete density $f(t)$ ($t\in \mathbb{N}_0$) by
${\overline f}(u) = \sum_{t=0}^{\infty} f(t)u^t$ with suitable $|u|$ within the radius of convergence. 
For discrete convolutions of two causal functions (we call a function $f(r)$ causal if $f(r)=0$ for $r<0$), say $a(t),b(t)$, supported on $t\in \mathbb{N}_0$ we will use the equivalence
\beq
\label{discrete-time-convol}
(a \star b)(t)=(b \star a)(t) =: \sum_{r=0}^t a(r)b(t-r) = \frac{1}{t!}\frac{d^t}{du^t}[{\overline a}(u) {\overline b}(u)]\big|_{u=0}.
\eeq
Further, we will denote the discrete convolution powers with $(a \star)^n = (a\star \ldots \star a)(t)$. 
The discrete Heaviside unit step function defined on 
$r\in \mathbb{Z}$ is such that
\beq
\label{discrete-Heaviside}
\Theta(r) =\sum_{k=-\infty}^{r} \delta_{0,k} =  \left\{\begin{array}{cl} 1 ,& r \geq 0 \\ \\
	0 , & r<0 \end{array}\right. \hspace{1cm} (r \in \mathbb{Z}).
\eeq
Note in particular that $\Theta(0)=1$. Finally, we will use $\delta_{r,k}=\delta_{r\,k}$ as equivalent notations for the Kronecker symbol.

Let us now  consider the product
\beq
\label{incomp_PDF}
\psi_{\cal F}(t)=\psi(t){\cal F}(t) , \hspace{1cm} t \in \mathbb{N}
\eeq
in which ${\cal F}(t) \in [0,1]$ for every $t \in \mathbb{N}_0$ 
%
%
%
%
where $\exists \, t_1 \in \mathbb{N} : \, {\cal F}(t_1) >0$
and $\psi_{\cal F}(\infty) = 0$, 
where $\ds \psi_{\cal F}(\infty)$ stands for the infinite time limit $\ds \psi_{\cal F}(\infty)=\lim_{t \to \infty} \ds \psi_{\cal F}(t)$.
%
%
The GF of (\ref{incomp_PDF}) then reads 
\beq
\label{GF-defective}
{\overline \psi}_{\cal F}(u) = \mathbb{E} u^{\Delta t}{\cal F}(\Delta t) = \sum_{t=1}^{\infty} \psi(t) {\cal F}(t) u^t,
\eeq
converging at least for $|u| \leq 1$. 
The special case in which ${\cal F}(t)=1$ for every $t \in \mathbb{N}_0$
gives ${\overline \psi}_{\cal F}(u) = \mathbb{E} \, u^{\Delta t} = {\overline \psi}(u)$, the non defective case.
%
%
Clearly, we have 
\beq
\label{incomplete_normalization}
{\overline \psi}_{\cal F}(u)\bigg|_{u=1} = \mathcal{Q} \in (0,1],
\eeq
where $\psi_{\cal F}(t)$ is not a proper PDF if $\mathcal{Q}<1$. In the latter case we will assume the missing probability mass $1-\mathcal{Q}$ being placed onto $\infty$. 
\begin{defin}\label{defI}
We call random variables with PDF (\ref{incomp_PDF}) and $\mathcal{Q}<1$ `{\it incomplete}' 
or `{\it defective}'.
Accordingly, if $\mathcal{Q}=1$ we refer to `{\it complete}' or `{\it non-defective}' random variables.
\end{defin}
With an abuse of terminology we will also talk of `{\it defective}' and `{\it non-defective}' distributions.
We will consider defective distributions subsequently in more details (Section \ref{pertinent_GFs}).
A straightforward defective case is represented by $ {\cal F}(t)=Q_0^{\, t}$, with $Q_0< 1$ for which the GFs 
${\overline \psi}_{\cal F}(u) = {\overline \psi}(Q_0u)$ and $\left. {\overline \psi}_{\cal F}(u)\right|_{u=1}  =  
{\overline \psi}(Q_0) =\mathcal{Q}$.

Let $R(t)$ be the counting process defined similarly to \eqref{discrete-time} but with a renewal chain $({\cal J}_m)_{m \in \mathbb{N}_0}$ with IID increments having PDF \eqref{incomp_PDF}. Consider the probability
that $R(t)$ exceeds a certain value $n_0$ at time $t$
\beq
\label{probability}
F_{n_0}(t) =
\mathbb{P}[R(t) >n_0] =  \mathbb{E} \Theta(t-{\cal J}_{n_0+1}).
\eeq
Hence, (\ref{probability}) has GF
\beq
\label{GF_Fn0}
{\overline F}_{n_0}(u) = \mathbb{E}\sum_{t={\cal J}_{n_0+1}}^{\infty} u^t  = \frac{[{\overline \psi_\mathcal{F}}(u)]^{n_0+1}}{1-u}.
\eeq
Note that we used the IID property so that $\mathbb{E} u^{{\cal J}_m} = [{\overline \psi_\mathcal{F}}(u)]^{m}$.
This GF in fact contains the large-time limit of $F_{n_0}(t)$. Namely,
\beq
\label{large-timeproba}
F_{n_0}(\infty) = \mathbb{P}[R(\infty) > n_0] = [{\overline \psi_\mathcal{F}}(u)]^{n_0+1}\bigg|_{u=1} = \mathcal{Q}^{n_0+1}
\eeq
i.e.\ $F_{n_0}(\infty)=1$ if $\mathcal{F}(t)=1$ for all $t$ (non-defective). 
Note that the number of arrivals almost surely exceeds any finite number.
On the contrary, if $\psi_\mathcal{F}$ is defective ($\mathcal{Q} < 1$) we have
$\lim_{n_0\to \infty} \mathbb{P}[R(\infty) > n_0] =  0$. 

In Sections \ref{DBP} and  \ref{DSP} we consider some essential features of transient variants of Sibuya and Bernoulli renewal processes, respectively.

Sometimes we evoke in our paper the picture of a renewal process (RP) as a sequence of possibly dependent trials
performed at integer time instants. If the outcome of a trial is a 'success' this corresponds to an event in the renewal picture, whereas if the outcome is a 'fail' the time instant is uneventful.
For our convenience we introduce the boolean random indicator variable which is telling us the outcomes, i.e. 
$Z_t =\{0,1\}$ where $Z_t=1$ (success) 
if $t$ is eventful, and $Z_t=0$ (fail) if $t$ is uneventful. We fix $Z_0=0$ almost surely.
The counting variable (\ref{discrete-time}) thus writes 
\beq
\label{counting_N}
{\cal R}(t)= \sum_{r=1}^t Z_r,
\eeq
indicating the number of events occurred up to and including time $t$. 

\subsection{Classification of arrival processes} \label{classification}

In general, we consider simple counting processes $\mathcal N(t)$, i.e.\ integer-valued stochastic processes with non-decreasing sample paths and such that, with probability $1$, there is no time at which two or more events occur simultaneously. 
In analogy with \cite{Latouche-et-al2003} we refer to $\mathcal{N}(t)$ as an arrival process (AP).
In the present paper we consider distinct classes of APs:
\begin{description}
\item {\bf Transient arrival processes} -- ({\bf Type I AP}): \, $\mathbb{P}[{\cal N}(\infty) < \infty]=1$, corresponding to an almost surely finite number of events with waiting times that can be infinite with strictly positive probability. This class contains discrete-time renewal processes with defective inter-arrival distribution (\lq defective RP' for simplicity), introduced  above.
\item {\bf Recurrent arrival processes} -- ({\bf Type II AP}):\,
 $\mathbb{P}[{\cal N}(\infty) = \infty] = 1$, corresponding to an infinite number of events with almost surely finite waiting times.  This class contains discrete-time renewal processes with non-defective inter-arrival distribution.
\item {\bf Intermediate arrival processes (IAPs)}: \, $\mathbb{P}[{\cal N}(\infty) < \infty] = {\cal Q}$, \, ${\cal Q}\in (0,1)$. 
IAPs can be viewed as a mixture of type I and type II processes.

\end{description}
 %
 %
 %
 %
%
%
The above classification includes the classes of transient and recurrent renewal processes mentioned in reference \cite{BarbuLimnios2008}.
%
%

Let
$P(n_0,t)= \mathbb{P}[{\cal N}(t) > n_0]$
and define the limiting quantity
\beq
\label{Lambda}
\Lambda = \lim_{n_0 \to \infty}  \lim_{t \to \infty} P(n_0,t) = \lim_{n_0 \to \infty} P(n_0,\infty).
\eeq
%
%
Clearly, $\Lambda =\mathbb{P}[{\cal N}(\infty)=\infty]$, that is the probability that the AP ${\cal N}(t)$ never stops.
Note that, for type I APs, $\Lambda = 0$, while for type II APs, $\Lambda = 1$, and finally, $\Lambda = 1 - \mathcal{Q} \in (0,1)$ for IAPs.
%
%
%
%
%
%
%
%
%
%
%
%
%
%
%

\section{Transient renewal processes}
\label{transient_renewal}
%
%
Here we consider a class of type I APs, namely transient renewal processes with defective IID interarrival times, i.e., they have a defective PDF $\psi_I(t)$, that is $\sum_{t=1}^{\infty}\psi_{I}(t) = \mathcal{Q} < 1$. We describe hereafter two special cases.
%
%

\subsection{Defective Bernoulli process (DBP)} 
\label{DBP}
We introduce a type I renewal process which we refer to as `defective Bernoulli process' (DBP).
Here we denote with $N_I(t)$ the DBP counting variable and
define the DBP by the waiting time GF
\beq
\label{DFB-GF}
{\overline \psi}_{I}(u) = \mathcal{Q}\frac{pu}{1-qu} ,\hspace{1cm} p+q=1 , \hspace{1cm} \mathcal{Q}\in (0,1],
\eeq
of the PDF $\psi_{I}(t)=\mathcal{Q}pq^{t-1}$, $t\in \mathbb{N}$. 
This gives straightforwardly the survival probability
\beq
\label{DBP-survival}
\Phi_{I}^{(0)}(t)= \mathcal{P}+\mathcal{Q}q^t
\eeq
with $\mathcal{P}=1-\mathcal{Q}$ and  $\Phi_{I}^{(0)}(0)=1$. The probability
$\Phi_{I}^{(0)}(t) \to \mathcal{P}$ as $t\to \infty $ and 
it has the GF 
\beq
\label{DFP-suvivalGF}
{\overline \Phi}_{I}^{(0)}(u) = \frac{\mathcal{P}}{1-u}+ \frac{\mathcal{Q}}{1-qu}.
\eeq
For $\mathcal{Q}=1$ all these relations turn into those of the Bernoulli process. 
Now, consider the state probabilities $\mathbb{P}[N_{I}(t)=n]=\Phi_{I}^{(n)}(t)$, i.e.\ the probability that $N_I$ counts $n$ events up to and including time $t$. Its GF reads (cf.~\eqref{GF_state_II})
\begin{align}
    \label{DBP-state-prob}
    {\overline \Phi}_{I}^{(n)}(u) & = \sum_{t=0}^{\infty}u^t \Phi_{I}^{(n)}(t) = {\overline \Phi}_{I}^{(0)}(u) [{\overline \psi}_{I}(u)]^n , \qquad  n,t \in \mathbb{N}_0, \qquad |u|<1, \\
    & = \left(\frac{\mathcal{P}}{1-u}+ \frac{\mathcal{Q}}{1-qu}\right)\frac{(\mathcal{Q} pu)^n}{(1-qu)^n}. \notag
\end{align}
The sum over $n$ of \eqref{DBP-state-prob} is $(1-u)^{-1}$ and this corresponds to the normalization $\sum_{n=0}^{\infty}\Phi_{I}^{(n)}(t)=1$.
Further, we see that 
\beq
\label{large-t-state}
\lim_{t \to \infty}{ \Phi}_{I}^{(n)}(t) = \lim_{u\to 1} (1-u){\overline \Phi}_{I}^{(n)}(u)= \mathcal{P} \mathcal{Q}^n 
\eeq
i.e.\ asymptotically the probabilities that a number $n$ of events occur tends geometrically to zero as $n \to \infty$.
This is different from the non-defective case in which they are zero regardless of the value of $n$.
By inverting (\ref{DBP-state-prob}) we obtain for $n > 0$
\beq
\label{DBP-state-probs}
\ds \Phi_{I}^{(n)}(t)  = \ds \mathcal{Q}^n p^n\left(\mathcal{Q}\binom{t}{n}q^{t-n} + 
\frac{\mathcal{P}}{q^n}\sum_{r=n}^t \binom{r-1}{n-1} q^{r} \right), 
\eeq
while for $n=0$ we have the survival probability (\ref{DBP-survival}).
We observe that $\Phi_{I}^{(n)}(t)=0$ for $t<n$,  thus the initial condition $\Phi_{I}^{(n)}(0)=\delta_{n0}$ is fulfilled.
In the type II limit $\mathcal{Q} \to 1-$, (\ref{DBP-state-probs}) reduces to the
binomial distribution  of  Bernoulli.

\begin{figure}
\centerline{\includegraphics[width=0.9\textwidth]{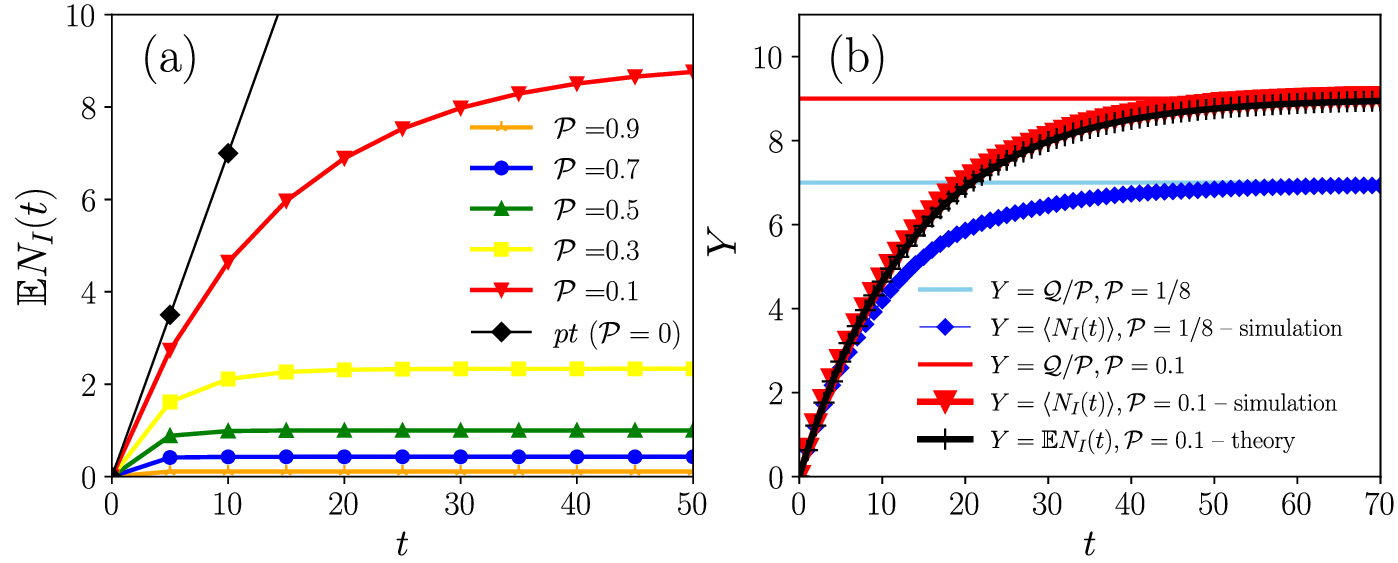}}
\vspace{-4mm}
\caption{
(a): Plot of  $\mathbb{E} N_{I}(t)$ in (\ref{expnum}) for different values of $\mathcal{P}$ and fixed $p=0.7$. For small $t$ a linear behavior of order $\mathcal{Q} pt$ is observed, and for large $t$ the asymptotic value $\mathcal{Q}/\mathcal{P}$ is approached. 
(b): Sample average $\langle N_I(t)\rangle$ of $1000$ realizations  obtained by Monte Carlo simulations compared to (\ref{expnum}) for fixed $p=0.7$.
}
\label{arrivals_def_ber}
\end{figure}
We are now interested in the expected number of DBP arrivals which, by \eqref{GF_NII} and \eqref{DFB-GF}, has the GF
\begin{align}
\label{exp_num-DBP-arrivals}
 \overline{\mathbb{E}  {N}_{I}}(u)   =  \frac{\partial }{\partial v}\bar {\cal P}_{I}(v,u)\Big|_{v=1} 
  =  \frac{1}{1-u}\frac{{\overline \psi}_{I}(u)}{1-{\overline \psi}_{I}(u)} = \frac{\mathcal{Q}pu}{(1-u)[1-u(1-p\mathcal{P})]}    
\end{align}
with $ \bar {\cal P}_{I}(v,u)  = \sum_{t=0}^{\infty}u^t \mathbb{E}  v^{{N}_{I}(t)}$. 
We can invert this GF getting the formula
\beq
\label{expnum}
 \mathbb{E}   N_{I}(t)  = \mathcal{Q} p \sum_{r=1}^t (1-p\mathcal{P})^{r-1} =  \mathcal{Q}p \sum_{r=0}^{t-1} (1-p\mathcal{P})^{r}= 
 \begin{cases}
    pt, & \mathcal{Q} = 1, \\
    \frac{\mathcal{Q}}{\mathcal{P}}\left(1-[1-p\mathcal{P}]^t\right), & \mathcal{Q} \in (0,1),
 \end{cases}
 \eeq
where, for $\mathcal{Q} \in (0,1)$, the number of arrivals approaches geometrically the finite asymptotic value
$\mathbb{E}   N_{I}(\infty) =
\mathcal{Q}/\mathcal{P}, 
$
which diverges in the type II limit $\mathcal{P}\to 0+$.
We notice that in the type II limit $\mathcal{P}\to 0+$ the expected number of arrivals increases linearly in time as for the  Bernoulli process. Namely,
\beq
\label{limit-to-type-II}
\lim_{\mathcal{P}\to 0+}  \mathbb{E}  N_{I}(t)  = \lim_{\mathcal{P}\to 0+} \frac{1}{\mathcal{P}}\left(1-[1- p\mathcal{P}t+O(\mathcal{P}^2)]\right) =  pt \, .
\eeq
%
%
In Fig.~\ref{arrivals_def_ber} we show $\mathbb{E} N_{I}(t)$ for some values of $\mathcal{P}$ 
while keeping the Bernoulli success probability fixed at $p=0.7$. Increasing the defectiveness of the process (i.e., increasing $\mathcal{P}$) results in a smaller expected number of events at any given time $t$.
The non-defective case ($\mathcal{P}=0$, in black in Fig.~\ref{arrivals_def_ber}) recovers the linear growth of the Bernoulli process whose state probabilities are given by the binomial distribution, see (\ref{DBP-state-probs}). 
For large time, the asymptotic value $\mathcal{Q}/\mathcal{P}$ is approached, independently of $p$.
Fig.~\ref{arrivals_def_ber}-(b), shows the sample average over $1000$ realizations of the counting variable $N_I(t)$ as a function of time. We notice the simulations match the analytical expressions. See Appendix \ref{simulation_method} for a brief sketch of the simulation method.
%
%
%
%
%
\\
In Fig.~\ref{state_probas_def_ber} we depict the state probabilities $\Phi^{(n)}(t)$ for the DBP evaluated at the specific time instant $t=6$ for several values of $\mathcal{P}$.
\begin{figure}
\centerline{\includegraphics[width=0.7\textwidth]{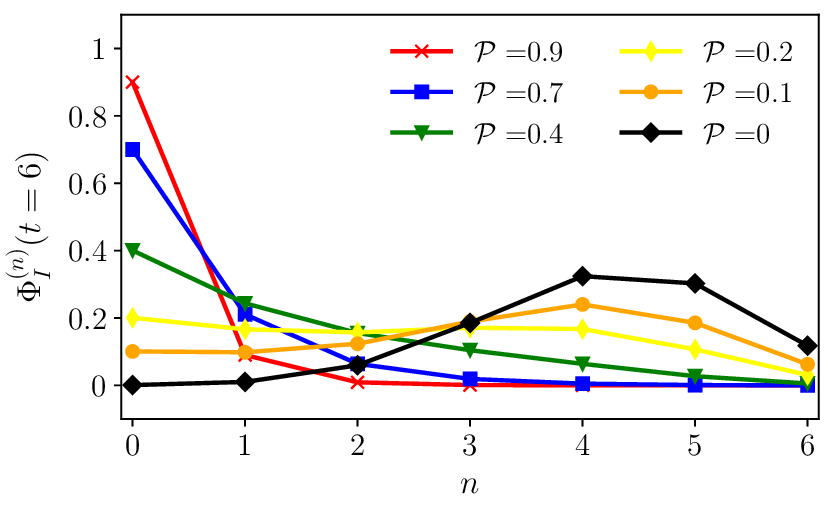}}
\vspace{-4mm}
\caption{State probabilities of DBP (Equations (\ref{DBP-state-probs}) and (\ref{DBP-survival})) at $t=6$ as a function of the number of arrivals $n$ for several values of $\mathcal{P}$ and fixed $p=0.7$. The binomial distribution of the non-defective case ${\cal P}=0$ is drawn in black color.}
\label{state_probas_def_ber}
\end{figure}

\subsection{Defective Sibuya Process (DSP)}
\label{DSP}
As a further prototypical example for a type I renewal process, we consider now the defective Sibuya process (DSP) with waiting time GF
\beq
\label{newSibuya}
{\overline \psi}_{I}(u)= \mathcal{Q}[1-(1-u)^{\mu}] , \hspace{1cm} \mathcal{Q} \in (0,1], \, \mu \in (0,1).
\eeq
The DSP waiting time density then is
\beq
\label{DSP_den}
\psi_{I}(t) = \mathcal{Q}(-1)^{t-1} \binom{\mu}{t} , \hspace{1cm} t \in \mathbb{N},
\eeq
We define the DSP as the renewal process with IID defective interarrival times which follow the PDF (\ref{DSP_den}).
The large-time asymptotics of (\ref{DSP_den}) is
\beq
\label{long-times}
\psi_{I}(t) \sim \mathcal{Q} \frac{\mu}{\Gamma(1-\mu)}t^{-\mu-1},
\eeq
and the DSP survival probability has the GF
\beq
\label{survivalGF}
{\overline \Phi}^{(0)}_{I}(u) = \frac{1-{\overline \psi}_{I}(u)}{1-u} = \frac{\mathcal{P}}{1-u} +\mathcal{Q}(1-u)^{\mu-1}
\eeq
where $\mathcal{P}=1-\mathcal{Q}$ can also be interpreted as the probability that no event has occurred in any finite time. 
Then, by conditioning, we can derive the state probabilities of the DSP as follows: 
\beq
\label{DSP-state-prob}
\Phi^{(n)}_{I}(t) = \mathbb{P}[N_{I}(t)=n] = \left(\Phi^{(0)}_{I} \star (\psi_{I} \star)^n\right)(t) =  
\frac{1}{t!}\frac{d^t}{du^t}\left({\overline \Phi}^{(0)}_{I}(u) [{\overline \psi}_{I}(u)]^n\right)\Big|_{u=0}.
\eeq
In particular, we have for $n=0$,
\beq
\label{large-time-survival}
\Phi_{I}^{(0)}(t) = \mathcal{P} + \mathcal{Q}(-1)^{t} \binom{\mu-1}{t},
\qquad t \in \mathbb{N}_0.
\eeq
We are now interested in the asymptotic expected number of events $\mathbb{E} {N}_{I}(t)$ of a DSP. 
For this purpose recall the notation
%
%
%
%
\beq {\cal P}_{I}(v,t) = \mathbb{E} v^{{N}_{I}(t)}
= \sum_{n=0}^tv^n\Phi_{I}^{(n)}(t).
\eeq
In order to get the large-time behavior, consider the GF of $\mathbb{E}  {N}_{I}(t)$ that, using (\ref{DSP-state-prob}), reads
\beq
\label{GF-expect}
\begin{array}{clr}
\ds  \overline{\mathbb{E}  { N}_{I}}(u)  &
= \ds  \sum_{t=0}^{\infty} u^t  \mathbb{E}  {N}_{I}(t)
\ds = {\overline \Phi}^{(0)}_{I}(u)  \sum_{n=0}^{\infty} n v^{n-1} [{\overline \psi}_{I}(u)]^n \Big|_{v=1} & \\ \\
& = \ds \frac{\partial }{\partial v} 
\frac{1-{\overline \psi}_{I}(u)}{1-u} \sum_{n=0}^{\infty} v^n [{\overline \psi}_{I}(u)]^n \Big|_{v=1} = \frac{\partial }{\partial v} \frac{1-{\overline \psi}_{I}(u)}{(1-u)[1-v{\overline \psi}_{I}(u)]}\Big|_{v=1} & \\ \\
 & = \ds  \frac{{\overline \psi}_{I}(u)}{(1-u)[1-{\overline \psi}_{I}(u)]} = \frac{\mathcal{Q}[1-(1-u)^{\mu}]}{(1-u)[\mathcal{P}+\mathcal{Q}(1-u)^{\mu}]} & 
 \end{array}
\eeq
which can be further written as 
\beq
\label{asymp}
\begin{array}{clr}
  \overline{\mathbb{E}  { N}_{I}}(u)   & = 
\ds \frac{\mathcal{Q}}{\mathcal{P}(1-u)}\frac{1-(1-u)^{\mu}}{1+\frac{\mathcal{Q}}{\mathcal{P}}(1-u)^{\mu}}
\\ \\ 
& = \ds \frac{\mathcal{Q}}{\mathcal{P}(1-u)}\left(1- \frac{(1-u)^{\mu}}{\mathcal{P}[1+\frac{\mathcal{Q}}{\mathcal{P}}(1-u)^{\mu}]}\right) = \frac{\mathcal{Q}}{\mathcal{P}(1-u)} -\frac{\mathcal{Q}}{\mathcal{P}^2}{\overline {\cal E}}_{\mu}(u).&
\end{array}
\eeq
Remarkably, the first line of \eqref{asymp} contains the waiting time GF of the so-called fractional Bernoulli process of type A, first introduced and analyzed in \cite{PachonPolitoRicciuti2021} (see equation (75) in that paper):
\beq
\label{FBA-GF}
{\overline \psi}_{\text{fBa}}(u) = \frac{1-(1-u)^{\mu}}{1+\frac{\mathcal{Q}}{\mathcal{P}}(1-u)^{\mu}}.
\eeq
Its inversion, the waiting time PDF $\psi_{\text{fBa}}(t)$ is a discrete approximation of the Mittag--Leffler distribution. In the second line of expression (\ref{asymp}) the quantity
\beq
\label{discrete-ML-function}
{\overline {\cal E}}_{\mu}(u)=\frac{(1-u)^{\mu-1}}{1+\frac{\mathcal{Q}}{\mathcal{P}}(1-u)^{\mu}}
\eeq
denotes the GF of ${\cal E}_{\mu}(t)$ which is a discrete approximation of the
Mittag--Leffler function (see \cite{MichelitschPolitoRiascos2021,PachonPolitoRicciuti2021} for explicit formulas).
Then by invoking Tauberian arguments we can see that the limit $u\to 1-$ in the GF \eqref{discrete-ML-function} yields the large time asymptotics of $\mathbb{E}N_I(t)$.
Indeed, for large values of $t$, the expected number of events approaches a constant value with a power-law rate, namely
\beq
\label{inverse_asyp}
\begin{array}{clr} 
\ds  \mathbb{E}  {N}_{I}(t)  & = \ds  \frac{\mathcal{Q}}{\mathcal{P}} -\frac{\mathcal{Q}}{\mathcal{P}^2}{\cal E}_{\mu}(t)   & \\ \\
& \sim \ds  \frac{\mathcal{Q}}{\mathcal{P}} -\frac{\mathcal{Q}}{\mathcal{P}^2} \frac{t^{-\mu}}{{\Gamma(1-\mu)}} & \\ \\
& \ds \underset{t \to \infty}{\xrightarrow{\qquad}} \frac{\mathcal{Q}}{\mathcal{P}}, &
\end{array}
\eeq
where
we used that
${\cal E}_{\mu}(t)\sim \frac{t^{-\mu}}{\Gamma(1-\mu)}$ for $t\to \infty$ (as ${\bar{\cal E}}_{\mu}(u) \sim (1-u)^{\mu-1}$ for $u \to 1-$).
One can see in (\ref{GF-expect}) that for $\mathcal{Q}=1$ we have $\overline{\mathbb{E} {N}_{I}}(u) \sim (1-u)^{-\mu-1}$ as $u \to 1-$. This turns into the well-known diverging power-law behavior of the expected number of events of a type II Sibuya process, that is
\beq
\label{standard_Sib}
 \mathbb{E}  N_{I}(t) \sim 
\frac{t^{\mu}}{\Gamma(1+\mu)} \underset{t \to \infty}{\xrightarrow{\qquad}} \infty.
\eeq
%
%
%
Since we do not have an  explicit expression for $\mathbb{E}  {N}_{I}(t)$, we provide in Fig. \ref{DSP_counting_var_simu} a Monte Carlo simulation based on $1000$ realizations of $N_I(t)$. 
We consider here
a Sibuya parameter $\mu=0.85$ and $\mathcal{P}=2/3$. With this choice of parameters we have a relatively rapid approach of power-law type to the asymptotic value $\mathcal{Q}/\mathcal{P}$ predicted in (\ref{inverse_asyp}).
%
%
%
\begin{figure}
\centerline{\includegraphics[width=0.7\textwidth]{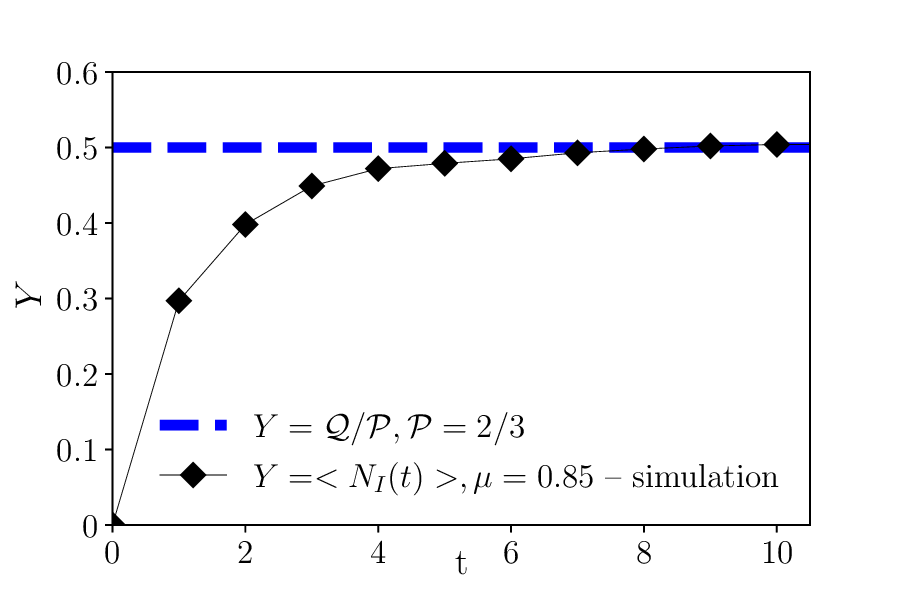}}
\vspace{-3mm}
\caption{ 
Sample path average over $1000$ realizations of $N_i(t)$ for the DSP with Sibuya parameter $\mu=0.85$  and 
 $\mathcal{P}=2/3$. For large times the
predicted asymptotics $\mathcal{Q}/\mathcal{P}$ of (\ref{inverse_asyp}) is approached.
}
\label{DSP_counting_var_simu}
\end{figure}

\subsection{General comments on defective renewal processes}
\label{appendix_c}

The fact that $\mathbb{E}N_I(\infty) = \mathcal{Q}/\mathcal{P}$
(see \eqref{expnum} for the DBP and (\ref{inverse_asyp}) for the DSP) holds in general for defective RPs. Indeed, consider a type I process with waiting time PDF $\psi_I(t)$ such that  
${\overline \psi}_I(u)=\mathcal{Q}{\overline \psi}_{II}(u)$ 
where ${\psi}_{II}(t)$ is non-defective.
Then, the expected number of arrivals of the type I RP has the GF
\beq
\label{GF_expected}
 \overline{\mathbb{E}  {N}_I}(u)  = 
\frac{\mathcal{Q}}{\mathcal{P}(1-u)} \frac{\mathcal{P}{\overline \psi}_{II}(u)}{1-\mathcal{Q}{\overline \psi}_{II}(u)}
= \frac{\mathcal{Q}}{(1-u)} \frac{{\overline \psi}_{II}(u)}{1-\mathcal{Q}{\overline \psi}_{II}(u)} 
\eeq 
which, by Tauberian arguments, gives $\mathbb{E}{N}_I(\infty) = \mathcal{Q}/\mathcal{P}$.
This result is corroborated by the following interpretation: at each renewal time a Bernoulli trial with success probability $\mathcal{P}$ is performed. If a success is obtained the related waiting time is infinitely long, otherwise it is sampled from $\psi_{II}$. Hence, in the limit $t \to \infty$ the random number of successes is geometric of parameter $\mathcal{P}$ regardless of the actual form of $\psi_{II}$. More formally.
this can be inferred from \eqref{large-t-state} which holds true for any renewal process with defective PDF (\ref{tablet}) such that ${\overline \psi}_I(u)\big|_{u=1}= \mathcal{Q} < 1$. Therefore, 
we have the general result
\beq
\label{infinite_number_state}
\Phi_I^{(n)}(\infty) = (1-u)\Phi_I^{(0)} [{\overline \psi}_I(u)]^n \Big|_{u=1} = 
[1-{\overline \psi}_I(u)] [{\overline \psi}_I(u)]^n \Big|_{u=1}
= \mathcal{P} \mathcal{Q}^n, \qquad n \in \mathbb{N}_0.
\eeq
This means that, for $t\to \infty$ the state probabilities are a geometric distributed.
From this relation, one immediately see the type I nature of the renewal processes $N_I(t)$ (see (\ref{Lambda}))
\beq
\label{type_I_confirm}
\Lambda = \lim_{n_0\to \infty} \mathbb{P}[N_I(\infty) > n_0] = \lim_{n_0\to \infty} \sum_{n=n_0+1}^{\infty}\Phi_I^{(n)}(\infty) =  \lim_{n_0\to \infty} \mathcal{Q}^{n_0+1} =0.
\eeq
Note that the process $N_I(t)$ and the process $M(t)$ considered in Section \ref{geometric_stop},
represent different kinds of type I APs: $N_I(t)$ is a renewal process with IID defective interarrival times, whereas $M(t)$ is an externally stopped arrival process and, as we have already shown, it is not a renewal process. In the case in which $M(t)$ has geometric non-defective absorbing time, the large time asymptotics of $M(t)$ is governed by an auxiliary renewal process ${\cal R}_q(t)$ which has defective IID interarrival times (see the GF (\ref{GFPm}) and the remarks thereafter).
\\
Finally, we observe in \eqref{GF_expected} (when $\mathcal{Q} < 1$) that the term $ \frac{\mathcal{P}{\overline \psi}_{II}(u)}{1-\mathcal{Q}{\overline \psi}_{II}(u)}$ is the GF of the waiting time PDF of the time-changed type II RP
${\cal N}({N}_{II}(t))$ with a  Bernoulli RP with success probability  $\mathcal{P}$, and which is independent of $N_{II}$ (see e.g. \cite{ADTRW2021}).
%
%

%
%
%
%
%
\section{Externally stopped arrival processes}
\label{pertinent_GFs}
\subsection{Non-defective $\bm{\Delta T}$}

Consider now a non-defective RP (that is a renewal type II AP) $N_{II}(t)$  with $N_{II}(0)=0$
%
%
which is externally stopped at the random time instant $\Delta T \in \mathbb{N}$ independent of 
$N_{II}(t)$. We assume, for the moment, that the stopping time $\Delta T$ 
is non-defective (recall Definition \ref{defI}), i.e. is drawn from a non-defective PDF $\psi_S(t)$. Hereafter we will denote by $\ds \Phi_S(t)$ the survival probability  of $\Delta t$. 
\begin{rmk}\label{quattrouno}
 In general one can consider  a non-defective RP \(N_{II}(t)\) which is externally stopped at the random time instant, independent of $N_{II}(t)$, defined as
$
\Delta T = \inf \{ r \in \mathbb{N} : N_S(r) \geq 1 \},
$
i.e., at the first arrival time of a further type II RP, say \(N_S(t)\), independent of \(N_{II}(t)\). 
This setting allows one to describe more general scenarios in which, for instance, the counting process is repeatedly interrupted and subsequently restarted. That is, the subsequent events of $N_S(t)$ are also taken into consideration.
\end{rmk}
%
%
%
%
%
%
%
%
We denote with $J_n^{II}$ the renewal times of $N_{II}(t)$ (recall (\ref{renewal_chain})). 
Consider now the AP  $M(t) \in \mathbb{N}_0$, $t\in \mathbb{N}_0$, such that 
\beq
\label{conditional-process}
M(t) = \Theta(\Delta T -1-t)N_{II}(t) + \Theta(t-\Delta T )N_{II}(\Delta T ) ,\hspace{1cm} M(0)=0, \hspace{1cm} \Delta T \in\mathbb{N}.
\eeq
The main goal of this section is to explore the properties of $M(t)$.
We observe that $M(t)$ behaves as $N_{II}(t)$ for $t<\Delta T$ and it is randomly stopped at $\Delta T \in \mathbb{N}$, the `stopping time' (we will also use the term `absorbing time'). After the stopping, the process keeps the 
value $M(\Delta T) = N_{II}(\Delta T)$ forever. 
The contribution $\Theta(\Delta T -1-t)N_{II}(t)$ can be seen as a resetting term: it behaves
as $N_{II}(t)$ for $t<\Delta T$ and it is set to zero at the absorbing time instant $t= \Delta T$.
For what follows, it is convenient to introduce the indicator function
\begin{align}
\label{Theta-a-b}
\Theta(a,t,b) &=\Theta(t-a)-\Theta(t-b) \notag \\
&= \sum_{r=a}^{b-1} \delta_{t,r} =\left\{\begin{array}{l} 1, \hspace{1cm} {\rm for} \hspace{0.5cm} a \leq t \leq b-1  \\ \\ 
0, \hspace{1cm} {\rm otherwise} \end{array}\right. (a,b,t \in \mathbb{N}_0 ,\hspace{0.25cm} a<b).
\end{align}
In particular, we have $\Theta(J_n^{II},t,J_{n+1}^{II})=1$ when $N_{II}(t)=n$ and zero otherwise (consult also \cite{SRW2022} for some details).
Then, we can represent $N_{II}(t)$ as
\beq
\label{NII-RP-theta}
N_{II}(t)= \sum_{\ell=1}^{\infty}\Theta(t-J^{II}_{\ell}),
\eeq
where the summation starts at $\ell=1$ to ensure that $N_{II}(0)=0$. 
\\[3ex]
%
%
%
The following results (Propositions \ref{PropoA}-\ref{propprop2}) can be obtained by classical methods related to random sums (e.g.\ Theorem 25 in \cite{Grimmet_Stirzacher}).
%
\begin{Proposition}
\label{PropoA}
Let ${\cal P}_{II}(v,t)=\mathbb{E} v^{N_{II}(t)}$ be the state polynomial of $N_{II}(t)$ . Then its GF ${\overline {\cal P}}_{II}(v,u)$ is
\beq 
\label{state_poly}
\begin{array}{clr}
\ds {\overline {\cal P}}_{II}(v,u) & = \ds 
\frac{1-{\overline \psi}_{II}(u)}{1-u} \frac{1}{1-v{\overline \psi}_{II}(u)} , & \ds  |v| \leq 1, \hspace{0.5cm} |u| <1.
\end{array}
\eeq
\end{Proposition}
%
%
%
%
%
%
%
%
%
%
We will further need the state probabilities
$\mathbb{P}[N_{II}(t)=n] = \Phi_{II}^{(n)}(t) = \mathbb{E} \Theta(J_n^{II},t,J_{n+1}^{II})$ 
having the GFs
%
%
%
\beq
\label{GF_state_II}
\begin{array}{clr}
\ds {\overline \Phi}_{II}^{(n)}(u) & = \ds \mathbb{E} \sum_{t=J_n^{II}}^{J_{n+1}^{II}-1} u^t =
\mathbb{E} u^{J_n^{II}} \sum_{r=0}^{\Delta t_{n+1}-1} u^r  & \\ \\ & = \ds \frac{1- \mathbb{E} u^{\Delta t_{n+1}}}{1-u} \prod_{j=1}^n \mathbb{E} u^{\Delta t_j} = [{\overline \psi}_{II}(u)]^n\frac{1-{\overline \psi}_{II}(u)}{1-u}  & \hspace{0.5cm} |u| < 1, \hspace{0.5cm} n \in \mathbb{N}_0.
\end{array}
\eeq 
%
%
%
%
%
%
%
%
%
%
%
%
%
%
%
%
%
%
%
%
%
\begin{Proposition}\label{propprop}
The GF of the expected number of arrivals $\mathbb{E} N_{II}(t)$ reads 
\beq
\label{GF_NII}
\overline{\mathbb{E}  N_{II}}(u)  = 
\frac{\partial}{\partial v}{\overline {\cal P}}_{II}(v,u)\bigg|_{v=1}
= \frac{{\overline \psi}_{II}(u)}{(1-u)(1-{\overline \psi}_{II}(u))}, \qquad  |u|< 1.
\eeq  
\end{Proposition}
%
%
%
%
%
%
%
 %
%
%
%
%
%
%
%
%
%
\begin{Proposition}\label{propprop2}
 We have
 \beq
\label{second_moment-GF}
 \overline{\mathbb{E}  N^2_{II}}(u) = 
 \left[\frac{\partial^2}{\partial v^2}+\frac{\partial}{\partial v}\right]{\overline {\cal P}}_{II}(v,u)\bigg|_{v=1}
 = \frac{{\overline \psi}_{II}(u)[1+{\overline \psi}_{II}(u)]}{(1-u)[1-{\overline \psi}_{II}(u)]^2},  \qquad  |u|< 1. 
\eeq
\end{Proposition}
%
%
%
%
%
%
%
%
%
%
%
The following representation of (\ref{conditional-process}) 
\beq
\label{NI-renewal}
M(t)=  \Theta(\Delta T -1-t)\sum_{\ell=1}^{\infty}\Theta(t-J^{II}_{\ell}) + 
\Theta(t-\Delta T )\sum_{\ell=1}^{\infty}\Theta(\Delta T -J^{II}_{\ell})
\eeq
will be useful in the next theorem that gives the  state probabilities of $M(t)$.
\begin{thm}
\label{first-theorem}
Let $M(t)$ be defined as in \eqref{conditional-process}, $N_{II}(t)$ a RP of type II and $\Delta T$ an independent non-defective stopping time.
The PDF of $M(t)$ reads
\begin{eqnarray}
  \label{state_M} P_m(t) &:=& \mathbb{P}[M(t)=m]  \nonumber \\
   & = &  \Phi_S^{(0)}(t)\Phi_{II}^{(m)}(t)+ \sum_{r=1}^{t} \psi_S(r)\Phi_{II}^{(m)}(r)     \nonumber \\
  &=&\sum_{r=1}^{\infty} \psi_S(r) \left(\Phi_{II}^{(m)}(t) \Theta(r-t-1)+ \Phi_{II}^{(m)}(r)\Theta(t-r)\right)
\end{eqnarray}
where $m\in \mathbb N_0$, 
 $\Phi_{II}^{(m)}(r)=\mathbb{P}[N_{II} (r)= m]$.
 \end{thm}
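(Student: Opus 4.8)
The plan is to compute $P_m(t) = \mathbb{P}[M(t) = m]$ by conditioning on the absorbing time $\Delta T$ and exploiting the independence of $N_S(t)$ and $N_{II}(t)$. From the definition \eqref{conditional-process}, the two Heaviside factors partition the sample space according to whether absorption has already occurred by time $t$: one has $\Theta(\Delta T - 1 - t) = 1$ precisely when $\Delta T > t$, in which case $M(t) = N_{II}(t)$, while $\Theta(t - \Delta T) = 1$ precisely when $\Delta T \leq t$, in which case $M(t) = N_{II}(\Delta T)$. I would first record that the boundary value $\Delta T = t$ belongs to the second (absorbed) regime, and that the two expressions coincide there since $N_{II}(\Delta T) = N_{II}(t)$ when $\Delta T = t$; hence no ambiguity arises at the boundary.

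Next I would apply the law of total probability with respect to the value of $\Delta T$. For the absorbed regime, summing over $\Delta T = r$ with $1 \leq r \leq t$ and using independence gives the contribution $\sum_{r=1}^{t} \psi_S(r)\,\mathbb{P}[N_{II}(r) = m] = \sum_{r=1}^{t} \psi_S(r)\Phi_{II}^{(m)}(r)$. For the not-yet-absorbed regime, the event $\{\Delta T > t\}$ coincides with $\{N_S(t) = 0\}$, since $\Delta T$ is the first arrival time of the non-decreasing process $N_S$; its probability is therefore $\Phi_S^{(0)}(t)$, and on this event $M(t) = N_{II}(t)$, so by independence it contributes $\Phi_S^{(0)}(t)\,\Phi_{II}^{(m)}(t)$. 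Adding the two pieces yields the first line of \eqref{state_M}.

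Finally, to recover the second line I would rewrite both summands as single series over $r \in \mathbb{N}$ weighted by $\psi_S(r)$, using the indicator identities $\Theta(r - t - 1) = 1 \Leftrightarrow r > t$ and $\Theta(t - r) = 1 \Leftrightarrow r \leq t$ afforded by the convention $\Theta(0) = 1$ in \eqref{discrete-Heaviside}. In particular $\Phi_S^{(0)}(t) = \mathbb{P}[\Delta T > t] = \sum_{r=1}^{\infty} \psi_S(r)\Theta(r - t - 1)$, so the not-yet-absorbed term becomes $\Phi_{II}^{(m)}(t)\sum_{r=1}^{\infty}\psi_S(r)\Theta(r - t - 1)$, whereas the absorbed term is $\sum_{r=1}^{\infty}\psi_S(r)\Phi_{II}^{(m)}(r)\Theta(t - r)$; collecting them under a common sum gives the stated form. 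The argument is essentially bookkeeping and the independence is what makes the conditional probabilities factor; the only point requiring genuine care is the consistent treatment of the boundary $\Delta T = t$ together with the half-open conventions of the discrete Heaviside step, which I would verify explicitly rather than leave implicit.
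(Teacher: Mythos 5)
Your proposal is correct and follows essentially the same route as the paper: both partition on whether absorption has occurred by time $t$ (your $\{\Delta T > t\}$ vs.\ $\{\Delta T \le t\}$ is exactly the paper's $\{N_S(t)=0\}$ vs.\ $\{N_S(t)\ge 1\}$), factor by independence, and sum over the values of $\Delta T$ in the absorbed regime. Your explicit check of the boundary case $\Delta T = t$ and the derivation of the second line of \eqref{state_M} via $\Phi_S^{(0)}(t)=\sum_{r=1}^{\infty}\psi_S(r)\Theta(r-t-1)$ (valid here precisely because $\psi_S$ is non-defective) are details the paper leaves implicit, but they do not constitute a different argument.
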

\begin{proof}
First consider that 
\beq
\label{state_prob_M}
%
%
P_n(t)= \mathbb{P}[M(t)=n, \Delta T >t ] + \mathbb{P}[M(t)=n, \Delta T \leq t] ,
\eeq
with $n, t \in \mathbb{N}_0$ and initial condition $P_n(t)\big|_{t=0} = \delta_{n,0}$.
In this relation the event $\{M(t)=n\}$ may occur either if $N_{II}(t)=n$ and is not stopped yet (first term), 
or when it is stopped and $N_{II}(\Delta T)=n$ (second term). 
Hence, the first term writes
\beq
\label{conditional-probNII}
\ds 
\mathbb{P}[N_{II}(t)=n, \Delta T >t] 
%
= 
\mathbb{E}\mathbf{1}_{\{N_{II}(t)=n\}}\mathbf{1}_{ \Delta T >t } 
=
\mathbb{E} \Theta(J_n^{II},t,J_{n+1}^{II})  \mathbb{E} \Theta(\Delta T-1-t)  
\eeq
by the independence of $N_{II}(t)$ and 
$\Delta T$.
%
%
Here the probability
\begin{align}
\mathbb{P}[N_{II}(t)=n]=\mathbb{E} \Theta(J_n^{II},t,J_{n+1}^{II})  = \mathbb{P}[M(t)=n|\Delta T > t ]
\end{align}
is in fact the
conditional probability that $\{M(t)=n\}$ given $\Delta T > t$

%
%
%
%
%
%
The second term in (\ref{state_prob_M}) is the joint probability that the process is absorbed and counts $n$ events,
namely
\beq
\label{second_term_M}
\begin{array}{clr} 
\ds H_n(t) := \ds 
\mathbb{P}[M(t)= n, t\geq \Delta T] 
& =  \mathbb{E} \Theta(J_n^{II},\Delta T,J_{n+1}^{II}) 
\Theta(t-\Delta T)  \\ \\ &  =\ds  \sum_{r=1}^{t} \mathbb{P}[\Delta T=r] 
\mathbb{P}[N_{II}(r)=n] = \ds \sum_{r=1}^{t} \psi_S(r)\Phi_{II}^{(n)}(r)  &
\end{array}
\eeq
where we interpret $\Phi_{II}^{(n)}(r)=\mathbb{P}[M (t)= n|\Delta T=r]$, $t \ge r$, as the conditional probability that $M(t)$ counts $n$ events given that $\Delta T=r$.
All the terms together complete the proof.
\end{proof}
%
%
%
%
%
%
%
%
%
%
%
%
%
%
We observe that $P_m(t) = 0$ for $m>t$ inheriting this feature from $N_{II}(t)$ and the initial condition $P_m(t)\big|_{t=0}=\Phi_{II}^{(m)}(t))\big|_{t=0}=\delta_{m,0}$.
Then we can define the GF (state polynomial of $M(t)$ which is of degree $t$) 
\beq
\label{statepoly_M}
\begin{array}{clr} 
\ds 
\Pi(v,t) 
&= \sum_{m=0}^t P_m(t) v^m & \\ \\
 & =\ds  \sum_{r=1}^{\infty} \psi_S(r) \left\{{\cal P}_{II}(v,t) \Theta(r-t-1)+  
 {\cal P}_{II}(v,r)\Theta(t-r)\right\}. &
 \end{array}
\eeq
From the state polynomial we obtain all the moments of $M(t)$. In particular, recalling Proposition \ref{propprop}, the expected number of arrivals at time $t$ is
\beq
\label{expect_M}
\begin{array}{clr}
\ds \mathbb{E} M(t)  &  = \ds \mathbb{E} \Theta(\Delta T -1-t) \mathbb{E} N_{II}(t)  +
\mathbb{E}\Theta(t-\Delta T )N_{II}(\Delta T )  & \\ \\ &   = \ds 
\sum_{r=1}^{\infty} \psi_S(r) \left\{ \Theta(r-t-1)\mathbb{E} N_{II}(t) +  
 \Theta(t-r)\mathbb{E} N_{II}(r)  \right\}. & 
 \end{array}
\eeq
Analogously, from Proposition \ref{propprop2}, we deduce  the second moment as
\beq
\label{second_moment}
\mathbb{E} M^2(t)  =   
%
%
\sum_{r=1}^{\infty} \psi_S(r) \left\{ \Theta(r-t-1)\mathbb{E}N_{II}^2(t) +  
 \Theta(t-r) \mathbb{E} N_{II}^2(r) \right\}.
\eeq
For the moments of order $\ell \in \mathbb{N}$ we can write
\beq
\label{higher_moment}
\mathbb{E} M^{\ell}(t)  = 
\mathbb{E} \Theta(\Delta T-1-t)  \mathbb{E} N_{II}^{\ell}(t)  
+ \mathbb{E} \Theta(t-\Delta T) N_{II}^{\ell}(\Delta T) 
\eeq
as a consequence of representation (\ref{conditional-process}) and by using the properties of the indicator functions.
We point out that the results of this section, in particular 
Theorem \ref{first-theorem}, 
%
%
hold if the stopping time $\Delta T$ is non-defective.
Section \ref{defective_psiS} will consider the defective case.
%
%
%
%
%
%

\subsubsection{Asymptotic behavior}
It is useful to consider now the GF of the probability $H_n(t)$ defined in (\ref{second_term_M}) which writes
\beq
\label{Gf_conditional_prob}
\begin{array}{l} 
\ds {\overline H}_n(u)  =
\sum_{r=1}^{\infty} \psi_S(r)\Phi_{II}^{(n)}(r) \sum_{t=r}^{\infty}u^t   
= \ds \frac{1}{1-u}\sum_{r=1}^{\infty} \psi_S(r)\Phi_{II}^{(n)}(r) u^r.
\end{array}
\eeq
This relation involves
$
\chi_n(t)= \psi_S(t)\Phi_{II}^{(n)}(t)
$
which is a defective PDF with respect to $t$ and also with respect to $n$, see (\ref{incomplete_normalization}). 
The large time limit $H_n(\infty)$ gives the probability to observe a given number $n$ of events in the almost surely absorbed process.
From this we can get the large time limit of the expected number of arrivals in the almost surely absorbed process as
\beq
\label{stopped_process}
\mathbb{E} \, M(\infty) \,  =\sum_{m=0}^{\infty} m H_m(\infty).
\eeq
A sufficient condition for the finiteness of \eqref{stopped_process} is given in the following proposition.
\begin{Proposition}\label{senzanome}
    Let $\mathbb{E}\Delta T <\infty$. Then 
    \beq
        \mathbb{E} M(t)
        %
      %
      \leq  C' t^{-\nu} 
        +
        \sum_{r=1}^t  r \psi_S(r), \hspace{1cm} 
        C' >0, \hspace{0.5cm} \nu \in (0,1), 
    \eeq
    and $\mathbb{E}M(\infty) <\infty$.
\end{Proposition}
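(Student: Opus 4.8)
The plan is to start from the closed form (\ref{expect_M}) for $\mathbb{E} M(t)$ and split it according to the two indicator functions. Since $\Theta(r-t-1)=1$ exactly when $r\ge t+1$ and $\Theta(t-r)=1$ exactly when $r\le t$, identity (\ref{expect_M}) rewrites as
\beq
\mathbb{E} M(t) = \mathbb{E} N_{II}(t)\,\mathbb{P}[\Delta T>t] + \sum_{r=1}^{t}\psi_S(r)\,\mathbb{E} N_{II}(r),
\eeq
where $\mathbb{P}[\Delta T>t]=\sum_{r=t+1}^{\infty}\psi_S(r)$ is the survival probability of the absorbing time. The two summands are the contribution of the sample paths not yet stopped at $t$ and of those already absorbed, respectively.

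The key deterministic input I would use is the almost sure bound $N_{II}(t)\le t$. Indeed, from (\ref{NII-RP-theta}) and the strict positivity of the waiting times ($\psi(0)=0$, hence $\Delta t_j\ge 1$ and $J^{(II)}_\ell\ge \ell$), no renewal with index $\ell>t$ can satisfy $J^{(II)}_\ell\le t$, so at most $t$ events can occur by time $t$. Taking expectations gives $\mathbb{E} N_{II}(t)\le t$ and likewise $\mathbb{E} N_{II}(r)\le r$. Substituting into the displayed identity yields
\beq
\mathbb{E} M(t)\le t\,\mathbb{P}[\Delta T>t] + \sum_{r=1}^{t} r\,\psi_S(r).
\eeq
Bounding the survival tail by $\mathbb{P}[\Delta T>t]\le C' t^{-\nu-1}$ turns the first summand into $C' t^{-\nu-1}t$, which is exactly the claimed inequality; the second summand is already in the stated form.

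To obtain finiteness I would then let $t\to\infty$. The second summand is nondecreasing in $t$ and, by monotone convergence, increases to $\sum_{r=1}^{\infty} r\,\psi_S(r)=\mathbb{E}\Delta T$, finite by hypothesis. For the first summand I claim $t\,\mathbb{P}[\Delta T>t]\to 0$: this follows from $\mathbb{E}\Delta T<\infty$ via $t\,\mathbb{P}[\Delta T>t]\le \mathbb{E}[\Delta T\,\mathbf{1}_{\{\Delta T>t\}}]\to 0$, and, under the polynomial tail with $\nu>0$, also directly from $C' t^{-\nu}\to 0$. Hence $\limsup_{t\to\infty}\mathbb{E} M(t)\le \mathbb{E}\Delta T<\infty$. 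Finally, since $\mathbb{E} M(\infty)=\sum_{m\ge 0} m\,H_m(\infty)$ by (\ref{stopped_process}) and the absorbed contribution converges monotonically, one identifies $\mathbb{E} M(\infty)$ with this limit, giving $\mathbb{E} M(\infty)<\infty$.

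The hard part will be controlling the survival (not-yet-stopped) term $t\,\mathbb{P}[\Delta T>t]$: this is precisely where the finite-mean hypothesis $\mathbb{E}\Delta T<\infty$ is indispensable, because a heavier tail would let the linear growth $\mathbb{E} N_{II}(t)\le t$ dominate and spoil the bound. A secondary point requiring care is the interchange of the limit $t\to\infty$ with the summation defining $\mathbb{E} M(\infty)$ in (\ref{stopped_process}); this is legitimate because the partial sums $\sum_{r=1}^t r\,\psi_S(r)$ are monotone and bounded, so monotone convergence applies while the vanishing survival term contributes nothing in the limit.
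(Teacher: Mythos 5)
Your proof is correct and follows the same skeleton as the paper's: the same decomposition (\ref{expect_M}) of $\mathbb{E}M(t)$ into the not-yet-stopped term $\Phi_S^{(0)}(t)\,\mathbb{E}N_{II}(t)$ and the absorbed sum $\sum_{r=1}^t\psi_S(r)\,\mathbb{E}N_{II}(r)$, the same deterministic bound $N_{II}(t)\le t$ a.s., and the same identification $\mathbb{E}M(\infty)=\sum_{r\ge 1}\psi_S(r)\,\mathbb{E}N_{II}(r)\le\mathbb{E}\Delta T<\infty$ in the limit. Where you genuinely differ is in the treatment of the survival term, and your version is more robust. The paper deduces from $\mathbb{E}\Delta T<\infty$ a polynomial tail bound $\Phi_S^{(0)}(t)\sim C' t^{-1-\nu}$, $\nu\in(0,1)$; this implication does not hold in general (e.g.\ a tail $\Phi_S^{(0)}(t)\asymp t^{-1}(\log t)^{-2}$ has finite mean yet is not $O(t^{-1-\nu})$ for any $\nu>0$), so, read literally, the displayed inequality of the proposition requires the polynomial tail as an additional assumption. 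Your alternative argument $t\,\mathbb{P}[\Delta T>t]\le\mathbb{E}\bigl[\Delta T\,\mathbf{1}_{\{\Delta T>t\}}\bigr]\to 0$ establishes the vanishing of the survival contribution, and hence $\mathbb{E}M(\infty)<\infty$, from the finite-mean hypothesis alone, which is the statement one actually wants. A second small point in your favour: to produce the summand $\sum_{r=1}^t r\,\psi_S(r)$ one needs $\mathbb{E}N_{II}(r)\le r$ for each $r\le t$, which is what you use; the paper's stated bound $\mathbb{E}N_{II}(r)\le t$ would only yield the weaker $t\sum_{r=1}^t\psi_S(r)$.
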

\begin{proof}
    From \eqref{expect_M},
    \beq
        \label{EM}
        \mathbb{E} M(t)  = \Phi_S^{(0)}(t)\mathbb{E} N_{II}(t)  +
        \sum_{r=1}^t\psi_S(r) \mathbb{E} N_{II}(r).
    \eeq
    By hypothesis, $\Delta T$ has a finite mean. Hence, the PDF $\psi_S(t)$ tends to zero at least as\footnote{We denote with $\sim$ asymptotic equality.} $\psi_S(t) \sim C t^{-2-\nu}$, $\nu \in (0,1)$, and the survival probability at least as $\Phi_S^{(0)}(t) \sim  C' t^{-1-\nu}$ where $C, C' >0$ are constants. Thus we have, for large values of $t$,
    \beq
        \label{upper-bound}
        \mathbb{E} M(t) \leq  C' t^{-\nu -1} t +
        \sum_{r=1}^t  r \psi_S(r),
    \eeq
    where we have used that
    $\mathbb{E} N_{II}(r)\leq t$, for every $r \le t$.
    Hence,
    \begin{align}
         C' t^{-\nu -1} t +
        \sum_{r=1}^t  r \psi_S(r) \sim C't^{-\nu} + \mathbb{E} \Delta T.
    \end{align}
    Moreover,
    \beq
        \label{limiting_M}
        \mathbb{E} M(\infty)  = \mathbb{E} N_{II}(\Delta T)  = \sum_{r=1}^{\infty}\psi_S(r) \mathbb{E} N_{II}(r) < \infty.
    \eeq 
\end{proof}

\begin{rmk}
 For non-defective $\Delta T$ with finite $\mathbb{E}\Delta T$
        the resetting term decays at least as $\Phi_S^{(0)}(t)\mathbb{E} N_{II}(t) \sim
    C' t^{-\nu }$ for large values of $t$. 
\end{rmk}

\begin{cor}
    Under the hypotheses of Proposition \ref{senzanome}, $M(t)$ is of type I. 
\end{cor}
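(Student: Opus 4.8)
The plan is to reduce the claim to the finiteness statement already secured in Proposition~\ref{senzanome}. By Remark~\ref{def_types}, an AP is of type I precisely when $\Lambda = \mathbb{P}[\mathcal{N}(\infty) = \infty] = 0$, so for $M(t)$ it suffices to prove $\mathbb{P}[M(\infty) = \infty] = 0$. First I would invoke \eqref{limiting_M}, which identifies the terminal value of the stopped process as $M(\infty) = N_{II}(\Delta T)$ and establishes $\mathbb{E} M(\infty) < \infty$ under the standing hypothesis $\mathbb{E}\Delta T < \infty$.

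The core observation is then that $M(\infty)$ is a nonnegative (indeed $\mathbb{N}_0$-valued) random variable with finite mean. Such a variable is almost surely finite, since $\mathbb{P}[M(\infty) = \infty] > 0$ would immediately force $\mathbb{E} M(\infty) = \infty$, contradicting \eqref{limiting_M}. Hence $\mathbb{P}[M(\infty) < \infty] = 1$, equivalently $\Lambda = 0$, which is exactly the defining property of a type I AP in Remark~\ref{def_types}~(i).

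An equivalent route, which I would mention for transparency, avoids moments entirely. Since $N_S(t)$ is of type II, its first arrival time $\Delta T$ is non-defective; combined with $\mathbb{E}\Delta T < \infty$ this yields $\mathbb{P}[\Delta T < \infty] = 1$. Because $N_{II}$ is a simple counting process built from strictly positive waiting times, it satisfies $N_{II}(r) \leq r$ for every finite $r$, so on the almost sure event $\{\Delta T < \infty\}$ we have $M(\infty) = N_{II}(\Delta T) \leq \Delta T < \infty$. This again gives $\mathbb{P}[M(\infty) < \infty] = 1$, hence $\Lambda = 0$.

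I expect no serious obstacle: the corollary is a direct consequence of Proposition~\ref{senzanome}. The only point needing a line of care is the standard implication that a nonnegative variable with finite expectation is almost surely finite (or, in the alternative route, the almost sure finiteness of $\Delta T$ together with the pathwise bound $N_{II}(r)\leq r$), after which the classification in Remark~\ref{def_types} closes the argument.
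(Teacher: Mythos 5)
Your proposal is correct, and your first route is precisely the paper's intended argument: the corollary is stated without a separate proof because it is immediate from Proposition~\ref{senzanome} via the standard implication that an $\mathbb{N}_0$-valued random variable with finite expectation is almost surely finite, so $\Lambda_M = \mathbb{P}[M(\infty)=\infty]=0$ and Remark~\ref{def_types}~(i) applies. Your second, pathwise route (non-defective $\Delta T$ with finite mean gives $\Delta T<\infty$ a.s., and then $M(\infty)=N_{II}(\Delta T)\leq \Delta T<\infty$ using $N_{II}(r)\leq r$) is a genuinely different argument and in fact proves more: it never uses the moment bound, only the almost sure finiteness of $\Delta T$, so it shows $M(t)$ is of type I whenever $N_S(t)$ is of type II, regardless of whether $\mathbb{E}\Delta T<\infty$. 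This stronger statement is exactly what the paper establishes later, by a different bounding argument on $\mathbb{P}[M(\infty)>m_0]$, in Section~\ref{defective_psiS} (where it is shown that $\Lambda_M=1-\mathcal{Q}_S$, hence $\Lambda_M=0$ in the non-defective case); your pathwise observation thus anticipates that general result with less machinery, while the moment route has the advantage of staying entirely within what Proposition~\ref{senzanome} already provides.
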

Now, we look for conditions on $\Delta T$
such that the large time limit of the moments (\ref{higher_moment}) of any order of $M(t)$ remains finite. Indeed,
\beq
\label{ell-moments}
\begin{array}{clr}
\ds \mathbb{E} M^{\ell}(\infty)  & = \ds \lim_{t\to\infty}\left\{ \Phi_S(t) 
\mathbb{E} N_{II}^{\ell}(t)  + \sum_{r=1}^{t} \psi_S(r) \mathbb{E} N_{II}^{\ell}(r)  \right\}
& \\ \\  & \ds \leq \lim_{t\to\infty}  t^{\ell} \Phi_S(t) + \sum_{r=1}^{\infty} r^{\ell} \psi_S(r), \hspace{1cm} \forall \ell \in \mathbb{N} &  
\end{array}
\eeq 
where we have used $N_{II}(t) \leq t$ almost surely.
Formula \eqref{ell-moments} is finite if $\mathbb{E}\Delta T^\ell$ is finite which, in turn, implies $t^{\ell}\Phi_S(t) \to 0$  as $t\to \infty$,
i.e. $\Phi_{S}(t) \to 0$ at least geometrically.

\begin{rmk}
If the stopped process $N_{II}(t)$ is Bernoulli of parameter $p$ we have the relations (see (\ref{ell-moments}); 
see also \cite{Grimmet_Stirzacher}, Theorem 25, page, 153)
%
%
\beq
\label{simple-relations}
 \mathbb{E} M(\infty)   =   p \mathbb{E} \Delta T  , 
 \qquad  \mathbb{E} M^2(\infty)    = p^2 \mathbb{E} \Delta T^2  +pq \mathbb{E} \Delta T  ,
\eeq
\label{non_def_stop_time}
\end{rmk}
%
%
provided that $\mathbb E\Delta T < \infty$.
%
%
%
%

\subsubsection{Geometrically distributed $\bm{\Delta T}$} 
\label{geometric_stop}
We discuss now the case when $M(t)$ is of type I with 
non-defective geometric absorbing time $\Delta T$
with pdf 
$\psi_S(t)=pq^{t-1}$, $t \in \mathbb{N}$ with $p \in (0,1)$ and $q=1-p$.
Recall the expected 
stopping time $\mathbb{E} \Delta T = 1/p$.
Concerning the applications on random walks described in Section \ref{RWs} we are interested in the following large time limit of the state probabilities (\ref{state_M}). 
%
%
%
%
\beq
\label{long-timePm}
\begin{array}{clr}
\ds P_m(\infty) & = \ds  \lim_{t\to \infty} \left( q^t \Phi_{II}^{(m)}(t) + \sum_{r=1}^{t} pq^{r-1} \Phi_{II}^{(m)}(r) \right)  = \frac{p}{q}\left(-\delta_{m,0}+\sum_{r=0}^{\infty} q^r \Phi_{II}^{(m)}(r)\right)  & \\ \\ 
 & = \ds 
\left\{ \begin{array}{l} \ds  \frac{q -{\overline \psi}_{II}(q)}{q} , \hspace{1cm} m=0  \\ \\ \ds
\frac{1-{\overline \psi}_{II}(q)}{q} [{\overline \psi}_{II}(q)]^m  ,  \hspace{1cm} m >0. 
\end{array}\right. &
\end{array}
\eeq
Let us consider the  case $m=0$ more closely. We denote here with $\Delta t_{II}$ the arrival time of the first event of $N_{II}(t)$.
We have that
\beq
\label{P_0_2nd_derivation}
\begin{array}{clr}
\ds \ds P_0(\infty) & =
\mathbb{P}[\Delta T < \Delta t_{II}]
& \\
&   = \ds \sum_{k=1}^{\infty}\mathbb{P}[\Delta t_{II}=k] \mathbb{P}[\Delta T < k] &  \\
 & = \ds \sum_{k=2}^{\infty} \psi_{II}(k) \sum_{r=1}^{k-1} \psi_S(r) & \\ 
 &
 = \ds \sum_{k=1}^{\infty} \psi_{II}(k)[1-q^{k-1}]  
 =  \frac{q-{\overline \psi}_{II}(q)}{q}, &
 \end{array}
\eeq
where $ \mathbb{P}[\Delta T < 1] =0$.
The type I nature of the process is also confirmed by 
\beq
\label{infty}
\Lambda_M = \lim_{m_0 \to \infty} \mathbb{P}[M(\infty) >m_0] = \lim_{m_0 \to \infty} \sum_{m=m_0+1}^{\infty} P_m(\infty)=   \lim_{m_0 \to \infty} \frac{[{\overline \psi}_{II}(q)]^{m_0+1}}{q} =0.
\eeq
The case $q \to 1-$ represents the type II limit ($\Lambda_M=1$) where 
$N_{II}(t)$ is almost surely never stopped. On the other hand in the case $q \to 0+$ the process is immediately killed at $t=1$.
In this case $P_0(\infty) =1-\alpha_1$, $P_1(\infty) = \alpha_1$, $P_m(\infty) =0$, $m\geq 2$, for some 
$\alpha_1 = \psi_{II}(t)\big|_{t=1} \in [0,1]$.
%
%

From (\ref{long-timePm}) we derive the infinite-time limit of state polynomial (\ref{statepoly_M}), namely
%
%
\beq
\label{GF_Pm}
\Pi(e^{-\lambda},\infty) = 
\sum_{m=0}^{\infty} e^{-\lambda m}P_m(\infty) = -\frac{p}{q}+ \frac{1-{\overline \psi}_{II}(q)}{q[1-e^{-\lambda} \, {\overline \psi}_{II}(q)]}  , \hspace{1cm} \lambda \geq 0                        
\eeq
%
%
where we have set $v=e^{-\lambda}$ to obtain the moment GF. Observe that
$ \Pi(e^{-\lambda},\infty)\big|_{\lambda=0} =1 $ reflecting normalization of the state probabilities $P_m(\infty)$ at infinite times.

Let us now consider the large time limit of the moments (\ref{ell-moments}) by using the relation
\beq
\label{geoNs}
\mathbb{E} M^{\ell}(\infty) = (-1)^{\ell} \frac{d^{\ell}}{d\lambda^{\ell}} \Pi(e^{-\lambda},\infty)\bigg|_{\lambda=0}  =
\frac{p}{q}\sum_{t=0}^{\infty} 
\mathbb{E} N_{II}^{\ell}(t) u^t \bigg|_{u=q} 
\eeq
where we observe the GFs of the moments of $N_{II}(t)$ come into play.
For $\ell=1$ we get the following explicit relation for the expected number of events (\ref{limiting_M}):
\beq
\label{limit_M_Ber}
\mathbb{E} M(\infty) = \frac{p}{q}
\overline{\mathbb{E}  N_{II}}(u) \bigg|_{u=q} = \frac{{\overline \psi}_{II}(q)}{q[1-{\overline \psi}_{II}(q)]}. 
\eeq
For the second moments we get (cf.\ (\ref{second_moment-GF}) and (\ref{geoNs}))
\beq
\label{expect_2ndM}
\mathbb{E} M^2(\infty)
=
\frac{1}{q} \frac{{\overline \psi}_{II}(q)[1+{\overline \psi}_{II}(q)]}{[1-{\overline \psi}_{II}(q)]^2} = \mathbb{E} M(\infty) \frac{1+{\overline \psi}_{II}(q)}{1-{\overline \psi}_{II}(q)}.
\eeq
The asymptotic variance of $M(\infty)$ thus reads
\beq
\label{variance-bernoulli-stopped}
\mathbb{V}\text{ar} M(\infty) 
= \frac{{\overline \psi}_{II}(q)(q - p{\overline \psi}_{II}(q))}{q^2[1-{\overline \psi}_{II}(q)]^2}
.
\eeq
Note that the non-negativity of \eqref{variance-bernoulli-stopped} follows from
${\overline \psi}_{II}(q) \leq q$ for every $q \in [0,1]$. 
If
$p \to 1-$ 
then $M(t)$ is immediately stopped at $t=1$ with $M(t)=N_{II}(1)$  for all $t \geq 1$. 
Hence, necessarily,
\beq
\label{tone}
\lim_{p \to 1-}\mathbb{E} M(\infty)
=\lim_{p \to 1-}\mathbb{E} M^2(\infty) 
= \alpha_1
\eeq
\beq
\label{varP1}
\lim_{p \to 1-}\mathbb{V}\text{ar} M(\infty) =\alpha_1(1-\alpha_1).
\eeq
The state probabilities \eqref{state_M} take here the form
\beq
\label{state_Ber_sto}
P_m(t) = \mathbb{P}[M(t)=m]= q^t \Phi_{II}^{(m)}(t) + \frac{p}{q}\sum_{r=1}^t q^r \Phi_{II}^{(m)}(r)
\eeq
and it is useful to evaluate their GF:
\beq
\label{GFPm}
\begin{array}{clr}
\ds {\overline P}_m(u) & = \ds  \sum_{t=0}^{\infty}\Phi_{II}^{(m)}(t)(qu)^t +\frac{p}{q} 
\sum_{t=0}^{\infty} u^t \left( -\delta_{m,0} +\sum_{r=0}^t q^r \Phi_{II}^{(m)}(r) \right)  & \\ \\
 & = \ds {\overline \Phi}_{II}^{(m)}(qu) +\frac{p}{q(1-u)}\left( -\delta_{m,0} +  {\overline \Phi}_{II}^{(m)}(qu)\right)  & \\ \\ 
&  = \ds \frac{1}{q} \frac{1-{\overline \psi}_{II}(qu)}{(1-u)}[{\overline \psi}_{II}(qu)]^m  -\frac{p}{q(1-u)}\delta_{m,0}.
\end{array}
\eeq
Note that in \eqref{GFPm} we have
\begin{align}
{\overline \Phi}_{\mathcal{R}_q}^{(m)}(u)=  \frac{1-{\overline \psi}_{II}(qu)}{(1-u)} [{\overline \psi}_{II}(qu)]^m=  \sum_{m=0}^{\infty} \sum_{t=0}^{\infty} u^t \mathbb{P}[{\cal R}_q(t) = m],
\end{align}
that is the GF of the state probabilities of an auxiliary type I renewal process, say ${\cal R}_q(t)$,
with defective waiting time PDF $\psi_q(t)=q^t\psi_{II}(t)$. The state probabilities of ${\cal R}_q(t)$
are such that
\beq
\label{typeI_renewal_state}
\mathbb{P}[\mathcal{R}_q(\infty)=m] =\lim_{u\to 1-} (1-u){\overline \Phi}_{\mathcal{R}_q}^{(m)}(u) = 
[1-{\overline \psi}_{II}(q)][{\overline \psi}_{II}(q)]^m =\mathcal{P}\mathcal{Q}^m  ,\hspace{1cm} m \in \mathbb{N}_0,
\eeq
with $\mathcal{Q} = {\overline \psi}_{II}(q)$ which is indeed a geometric distribution supported on $\mathbb{N}_0$.
From \eqref{GFPm} we immediately retrieve
$ \ds P_m(\infty) = \lim_{u\to 1-} (1-u){\overline P}_m(u)$ given in (\ref{long-timePm}). The GF of \eqref{GFPm} then writes
\begin{equation}\label{GF_state_pol}
\ds {\overline \Pi}(v,u) = \ds  \sum_{m=0}^\infty v^m {\overline P}_m(u) 
  \ds = \frac{1}{q}{\overline \Pi}_{{\mathcal R}_q}(v,u) - \frac{p}{q(1-u)}    =\frac{1}{q}\frac{1-{\overline \psi}_{II}(qu)}{(1-u)[1-v{\overline \psi}_{II}(qu)]} - \frac{p}{q(1-u)}.
\end{equation}
We retrieve the limit $ \ds \lim_{u \to 1-}(1-u){\overline \Pi}(e^{-\lambda},u) = \Pi(e^{-\lambda},\infty)$ of expression (\ref{GF_Pm}). One further verifies that
${\overline \Pi}(v,u)\big|_{v=1} = \frac{1}{1-u}$,
that is the normalization of the state probabilities, and that $\ds {\overline \Pi}(v,u)\big|_{u=0}= 1$, consistent with $M(0)=0$ almost surely.
Note that in (\ref{GF_state_pol}) ${\overline \Pi}_{{\cal R}_q}(v,u) = \frac{1-{\overline \psi}_{II}(qu)}{(1-u)[1-v{\overline \psi}_{II}(qu)]} $ is the GF of the state polynomial of the renewal process $\mathcal{R}_q(t)$. 
What is the connection between $M(t)$ and the renewal process $\mathcal{R}_{q}(t)$?
To see this, consider the link between their state polynomials:
\beq
\label{M_with_rrenewal_part}
\Pi_{\mathcal{R}_{q}}(v,t) = p + q \Pi(v,t).
\eeq
The renewal structure of $\mathcal{R}_q(t)$ is evident as $\Pi_{\mathcal{R}_q}(v,t)$ satisfies the renewal equation
\beq
\label{renewal_eq_R}
\Pi_{\mathcal{R}_q}(v,t) = \mathbb{P}[\mathcal{R}_q(t)=0] + v \sum_{r=1}^t q^r \psi_{II}(r)\Pi_{\mathcal{R}_q}(v,t-r).
\eeq
Conversely, the state polynomial $\Pi(v,t)$ of $M(t)$ does not fulfill a corresponding renewal equation. This suggests that
$M(t)$ in fact is not a renewal process (see Section \ref{appendix_c} for further discussion).

\bigskip

\noindent  {\bf (i) Geometrically stopped Sibuya process} 
\\[2ex]
Here we consider $N_{II}(t)$ to be a Sibuya process and $\Delta T$  a non-defective geometric stopping time following the PDF 
$\psi_S(t) = p_Sq_S^{t-1}$ ($q_S=1-p_S$, $t\in \mathbb{N}$) in which $p_S$ is the associated success probability 
to stop the process in an independent Bernoulli trial at each time step. 
%
%
%

\bigskip

%
%
Let $\mu \in (0,1)$ be the parameter of the Sibuya distribution, which has the form
\beq
\psi_{Sib}(t) = (-1)^{t-1} \binom{\mu}{t}, \qquad t \in \mathbb{N},
\eeq
(see Section \ref{DSP}). We consider here the Sibuya process
being stopped by a type II Bernoulli process.  
Taking into account \eqref{newSibuya} with $\mathcal{Q}=1$ we have
\beq
\label{Ber-Sib}
\mathbb{E} M(\infty)  = \frac{1-p_S^{\mu}}{q_Sp_S^{\mu}} , \quad 
\mathbb{E} M^2(\infty)  = \frac{(1-p_S^{\mu})(2-p_S^{\mu})}{q_Sp_S^{2\mu}} , \quad \mathbb{V}\text{ar} M(\infty) = 
\frac{(1-p_S^{\mu})(q_S-p_S+p_S^{\mu+1})}{q_S^2p_S^{2\mu}},
\eeq
from which the limit $p_S \to 1-$ with $\alpha_1=\mu$ is immediately retrieved (see equations \eqref{tone}, \eqref{varP1}). 
We show the large time asymptotic values (\ref{Ber-Sib}) as a function of the stopping probability $p_S$ in Fig.~\ref{M-infinity}-(b). There, one can see that these quantities decrease monotonically with $p_S$. In Fig.~\ref{M-infinity}-(c) we compare the mean $\mathbb{E} M(\infty)$ with
the sample average values over $100$ realizations of $M(\Delta T)$. 
%
%
%
%
Note that the limit $p_S\to 0+$ represents the type II limit for which the Sibuya process is not stopped almost surely.
\begin{figure}
\centerline{
\includegraphics[width=0.7\textwidth]{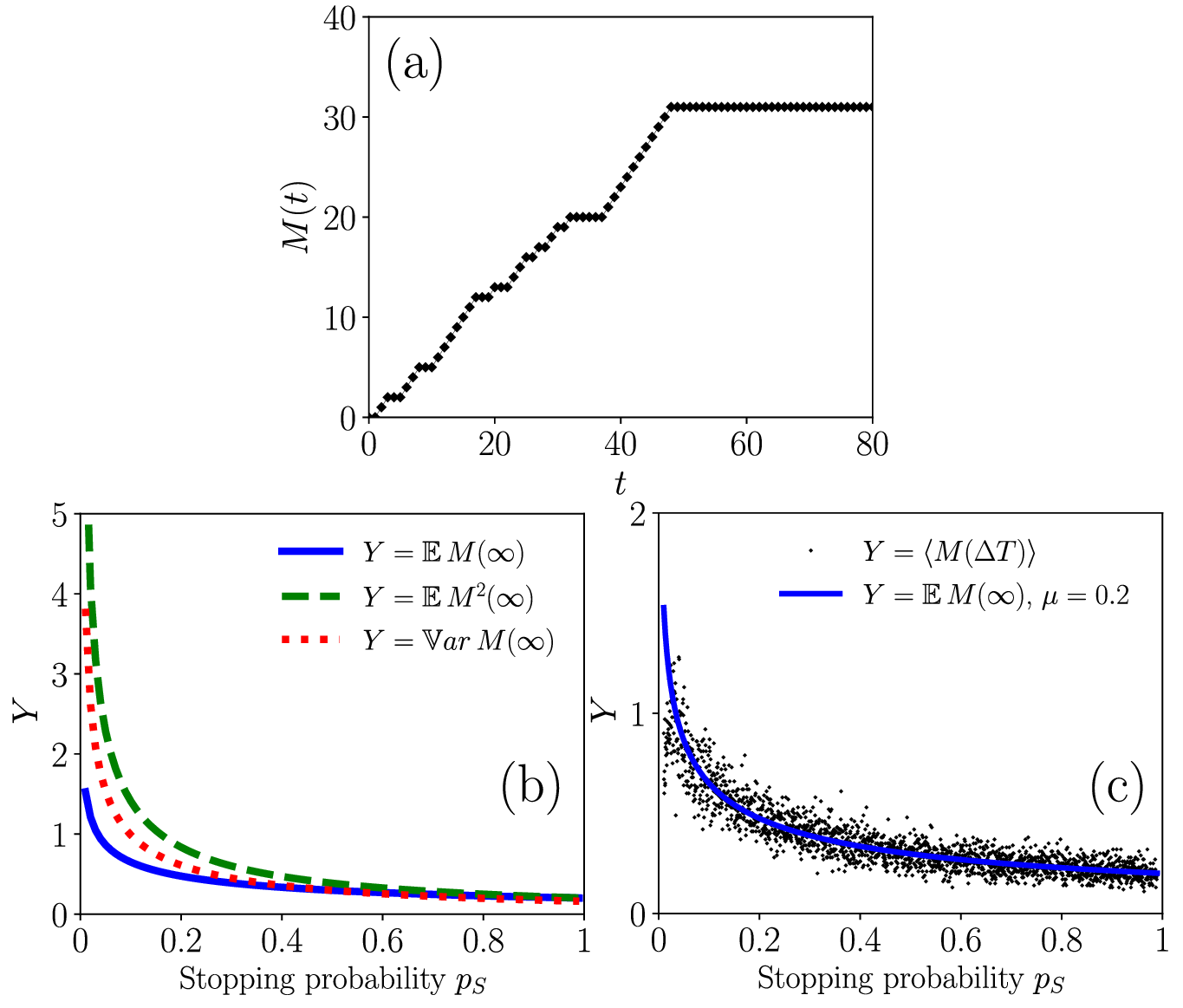}
}
%
%
%
\caption{
Geometrically stopped Sibuya process $N_{II}(t)$ with $\mu=0.2$.
The stopping time is non-defective.
(a): A sample path illustrating the qualitative behavior of the process ($p_S=0.01$) (b): Infinite time limits of (\ref{Ber-Sib}) plotted with respect to  $p_S$. 
(c): Sample path average ($100$ realizations) of $M(\Delta T)$ as a function of $p_S$. Each realization is run for maximum $t = 100$. The mean 
$\mathbb{E} M(\infty)$ from (\ref{Ber-Sib}) is represented in blue. 
}
\label{M-infinity}
\end{figure}

\bigskip
 
\noindent 
{\bf (ii) Geometrically stopped Bernoulli process}
\\[2ex]
Assume $N_{II}(t)$ to be a Bernoulli process with non-defective geometric stopping time $\Delta T$
with PDF $\psi_S(t)= p_Sq_S^{t-1}$ and let $\psi_{II}(t)= pq^{t-1}$ be the PDF associated to $N_{II}(t)$.
%
%
For this AP, which is clearly of type I for $p_S>0$,  considering \eqref{DFB-GF} for $\mathcal{Q} = 1$, we have that
\beq
\label{BB-infty}
\mathbb{E} M(\infty)  = \frac{p}{p_S} , \hspace{0.5cm} \mathbb{E} M^2(\infty)  =\frac{p[1+q_S(p-q)]}{p_S^2} \hspace{0.5cm}   \mathbb{V}\text{ar} M(\infty) = \frac{p[q+q_S(p-q)]}{p_S^2}.
\eeq
We depict the behavior of these quantities in Fig.~\ref{BerstopsB}.
\begin{figure}
\centerline{
\includegraphics[width=0.7\textwidth]{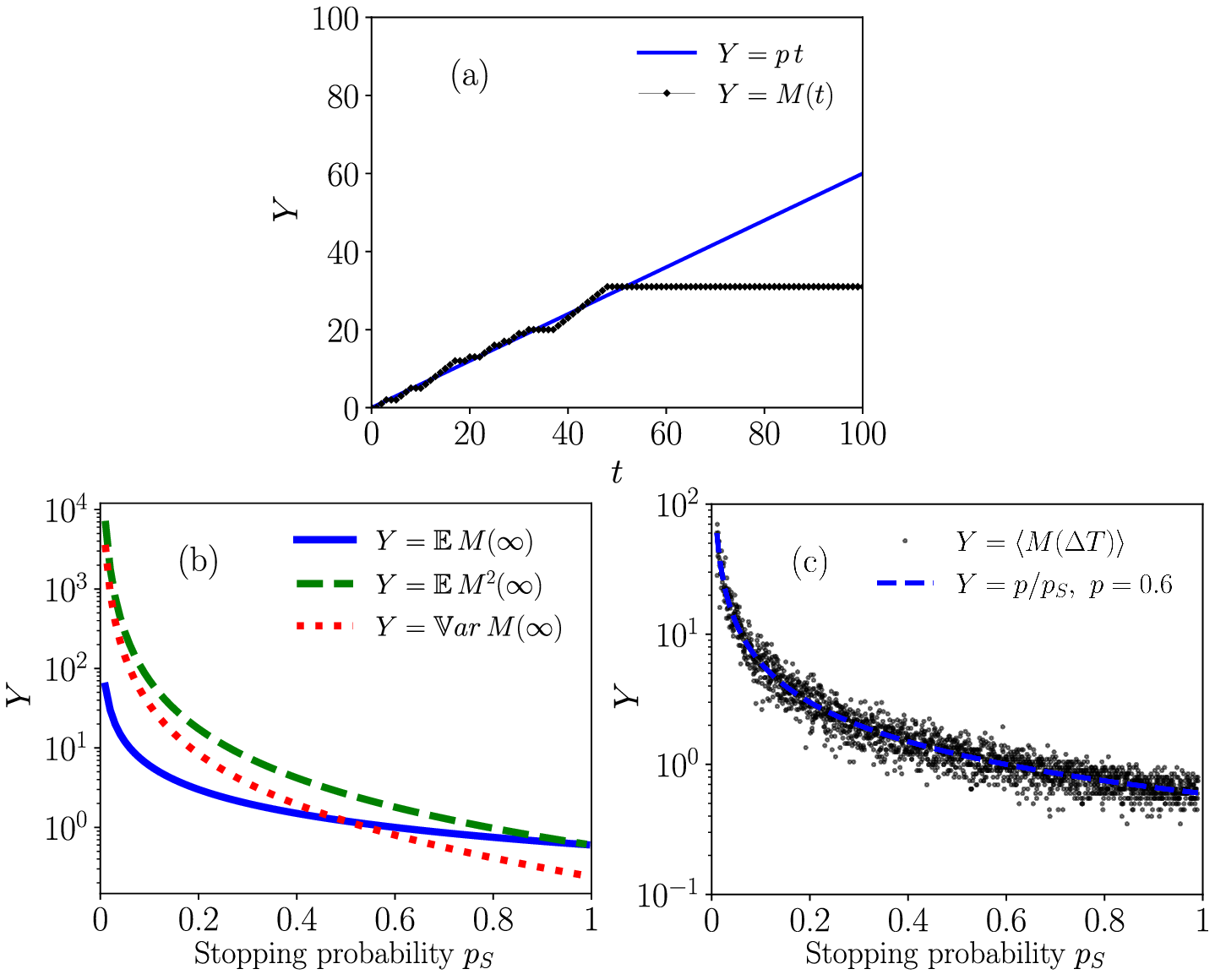}
}
\caption{A Bernoulli process $N_{II}(t)$ with  $p=0.6$ 
is geometrically stopped with success probability $p_S$. The geometric stopping time is non-defective. (a): A simulated sample path for $p_S=0.01$.
(b): The quantities in (\ref{BB-infty}) in log scale as functions of $p_S$. 
(c): Sample path average over $20$ realizations of $\langle M(\Delta T)\rangle$ as a function of $p_S$. Each realization is run for maximum $t = 5000$. The theoretical expected value is plotted in blue. 
}
\label{BerstopsB}
\end{figure}
%
%
In particular, Fig.~\ref{BerstopsB}-(c) shows a simulation of the average ($20$ realizations) of $M(\Delta T)$ as a function of $p_S$. The simulation matches with the theoretical value 
$\mathbb{E}\, M(\infty) = p/p_S$.

%
%
%
%
\subsubsection{Poisson-distributed $\bm{\Delta T}$}
\label{non-geo}
Here $N_{II}(t)$ is a Bernoulli process and 
$\Delta T$ is Poisson$(\lambda)$,  such that
$\psi_S (t) = \frac{\lambda^{t-1}}{(t-1)!} e^{-\lambda}$, $t \in \mathbb{N}$.
The stopping time GF is ${\overline \psi}_S(u) = ue^{\lambda(u-1)}$, $|u|<1$, thus from \eqref{limiting_M},
\beq
\label{Minfties}
\mathbb{E} M(\infty)  = p \frac{d}{du}[ue^{\lambda(u-1)}]\bigg|_{u=1}=  p(\lambda+1) , \hspace{0.3cm} \mathbb{E} M^2(\infty) = p\left[p(\lambda +1)^2 + \lambda + q \right].
\eeq
Hence, the variance of $M(\infty)$ reads
$\mathbb{V}\text{ar}M(\infty) = p(\lambda +q)$.
%
%
%
%
%

On the other hand, the limit which corresponds to stopping the Bernoulli process at $t=1$ a.s.\ is $\lambda \to 0+$, 
where the formulas in (\ref{Minfties}) boil down to (\ref{tone}) and (\ref{varP1}).
These asymptotic values increase monotonically as the mean stopping time of $\Delta T$ increases, that is the broader $\psi_S(t)$ the longer $N_{II}(t)$ remains unstopped.

\subsection{Defective $\bm{\Delta T}$}
\label{defective_psiS}
So far we assumed that the stopping time, i.e.
$\psi_S(t)$ was non-defective. 
In this section we consider the absorbing
time $\Delta T$ to be a defective random variable.
%
%
Recall that
\beq
\label{survial_def_finitetime}
\Phi_S^{(0)}(t)= 
\mathbb{E}\Theta(\Delta T-1-t) = 1-\mathcal{Q}_S+\sum_{r=t+1}^{\infty}\psi_S(r)  =1-\sum_{r=1}^t \psi_S(r)
\eeq
with the large time limit
\beq
\label{survival_defective}
\Phi_S^{(0)}(\infty)= \lim_{t\to \infty} \mathbb{P}[\Delta T >t] =1- \sum_{r=1}^{\infty}\psi_S(r) =1-{\cal Q}_S , \hspace{1cm} {\cal Q}_S \in (0,1]
\eeq
denoting the probability that $\Delta T=\infty$.
%
Equation (\ref{survial_def_finitetime}) is the consequence of the following proposition that holds for possibly defective RVs supported on the positive integers.
\begin{Proposition}
\label{averaging_defective_varibal}
Let $\Delta T$ be a discrete random variable with PDF $\psi_S(r)$, and $(0,1] \ni {\cal Q}_S= \sum_{r=1}^{\infty} \psi_S(r)$. Then
\beq
\label{averaging_feature_defective}
\mathbb{E} f(\Delta T) = \sum_{r=1}^{\infty} \psi_S(r)f(r) + (1-{\cal Q}_S)f(\infty)
\eeq
for suitable measurable functions $f(r)$ where $f(\infty) = \lim_{r\to \infty} f(r)$. 
\end{Proposition}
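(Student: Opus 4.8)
The plan is to realize the defective variable $\Delta T$ as a genuine (proper) random variable taking values in the one-point extension $\overline{\mathbb{N}} = \mathbb{N}\cup\{\infty\}$. First I would observe that, since the atoms $\psi_S(r)$ over the finite positive integers sum to $\mathcal{Q}_S \le 1$, the remaining mass must be carried by the adjoined point $\infty$; that is, declaring $\mathbb{P}[\Delta T = \infty] = 1 - \mathcal{Q}_S$ turns $\psi_S$ into a bona fide probability distribution on $\overline{\mathbb{N}}$. This is precisely the step that makes the ``defect'' $1-\mathcal{Q}_S$ interpretable as the probability $\mathbb{P}[\Delta T = \infty]$, consistent with the survival limit \eqref{survival_defective}.

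Next I would apply the definition of expectation for a discrete variable on this extended space, writing $\mathbb{E} f(\Delta T) = \sum_{r\in\overline{\mathbb{N}}} f(r)\,\mathbb{P}[\Delta T = r]$ and splitting off the single term at $r=\infty$. Since $f(\infty) := \lim_{r\to\infty} f(r)$ is assumed to exist, the value of $f$ at the extra point is well defined, and collecting the finite-integer contributions with the isolated atom at infinity yields directly $\mathbb{E} f(\Delta T) = \sum_{r=1}^{\infty} \psi_S(r) f(r) + (1-\mathcal{Q}_S) f(\infty)$, as claimed.

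The only point requiring care --- and the reason for the hypothesis of ``suitable measurable functions'' --- is ensuring both the existence of $f(\infty)$ and the absolute convergence of $\sum_{r} \psi_S(r) f(r)$, so that the expectation is well defined and the split is legitimate. If one prefers to avoid invoking the extended state space outright, the same identity can be recovered by truncation: approximate $f$ by functions $f_N$ that agree with $f$ up to $N$ and are constant equal to $f(\infty)$ thereafter, compute the corresponding expectations exactly (a finite head plus a tail of total mass $1-\sum_{r=1}^N\psi_S(r)$), and pass to the limit $N\to\infty$ using boundedness together with dominated convergence. I expect the extension argument to be the cleanest, and the main --- indeed only --- obstacle to be the bookkeeping of this integrability condition rather than any structural difficulty in the statement itself.
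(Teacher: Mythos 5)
Your proposal is correct and coincides with the paper's own (very brief) treatment: the paper gives no formal proof, only the remark that ``a probability mass $1-{\cal Q}_S$ is located at infinity,'' which is exactly the realization of $\Delta T$ as a proper random variable on $\mathbb{N}\cup\{\infty\}$ that you formalize. Your additional care about integrability and the truncation/dominated-convergence alternative go beyond what the paper records, but the underlying idea is the same.
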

This proposition accounts for the fact that a probability mass
$1-{\cal Q}_S$ is
located at infinity.
The non-defective case appears for ${\cal Q}_S=1$.
This relation yields infinity for the expected waiting time $\mathbb{E} \Delta T$ when $\Delta T$ is defective.
Considering $f(r)=u^r$, $|u| \le 1$, in (\ref{averaging_feature_defective}) yields
\beq
\label{mean_pwower}
\mathbb{E} u^{\Delta T} = \left\{\begin{array}{ll} {\overline \psi}_S(u) , & |u|<1 \\ \\
1 , & u=1 
\end{array}\right.
\eeq
which is discontinuous at $u=1$ for $\mathcal{Q}_S\ne 1$, namely
\begin{equation}\label{thermos}
    {\overline \psi}_S(u)\big|_{u=1} = \mathbb{E} u^{\Delta T}\big|_{u=1-} = {\cal Q}_S.
\end{equation}
It is then straightforward to recover the GF
\begin{equation}\label{tablet}
    {\overline \Phi}_S^{(0)}(u) = \frac{1 -\mathbb{E} u^{\Delta T}}{1-u} = \frac{1 - {\overline \psi}_S(u)}{1-u} , \hspace{1cm} |u|<1,
\end{equation}
which has the same structure as in the non-defective case (\ref{GF_state_II}) but this time with the property in \eqref{thermos}.
%
%
From \eqref{tablet} we recover the infinite-time limit (\ref{survival_defective}), namely
%
%
\beq
\label{recovere_asymp}
\Phi_S^{(0)}(\infty) = \lim_{u\to 1-} (1-u){\overline \Phi}_S^{(0)}(u) 
=1-{\cal Q}_S .
\eeq
Proposition \ref{averaging_defective_varibal} leads to the following extension of Theorem \ref{first-theorem} for the defective case.
\begin{thm}
\label{psi_s_defect_theorem}
Let $M(t)$ be defined as in \eqref{conditional-process}, and 
$\psi_S(r)$ be defective with $\sum_{r=1}^{\infty} \psi_S(r) = \mathcal{Q}_S \in (0,1]$. 
Then, the PDF of $M(t)$ reads
\begin{equation}
    \label{prob_mass_defect}
    \mathbb{P}[M(t)=m]= (1-\mathcal{Q}_S)\Phi_{II}^{(m)}(t) + \sum_{r=1}^{\infty} \psi_S(r) \left(\Phi_{II}^{(m)}(t) \Theta(r-t-1)+ \Phi_{II}^{(m)}(r)\Theta(t-r)\right).
\end{equation}
\end{thm}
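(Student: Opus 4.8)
The plan is to follow verbatim the decomposition used in the proof of Theorem \ref{first-theorem}, the only new ingredient being the probability mass that a defective $\Delta T$ places at infinity. I would start, exactly as in \eqref{state_prob_M}, by splitting the event according to whether the stopping process has already fired by time $t$:
\[
\mathbb{P}[M(t)=m] = \mathbb{P}[M(t)=m,\, N_S(t)=0] + \mathbb{P}[M(t)=m,\, N_S(t)\geq 1].
\]
On $\{N_S(t)=0\}=\{\Delta T>t\}$ the process has not yet been stopped, so $M(t)=N_{II}(t)$ by the definition \eqref{conditional-process}; by independence of $N_S$ and $N_{II}$ this contributes $\Phi_S^{(0)}(t)\,\Phi_{II}^{(m)}(t)$. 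On $\{N_S(t)\geq 1\}=\{\Delta T\leq t\}$ the process has been absorbed at some finite $r\leq t$ and thereafter holds the value $N_{II}(r)$, contributing $\sum_{r=1}^t \psi_S(r)\,\Phi_{II}^{(m)}(r)$, precisely as in \eqref{second_term_M}.

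The key step is to insert the \emph{defective} survival probability, and this is where Proposition \ref{averaging_defective_varibal} (through its consequence \eqref{survial_def_finitetime}) does the essential work: for a defective $\Delta T$ one has $\Phi_S^{(0)}(t)=(1-\mathcal{Q}_S)+\sum_{r=t+1}^{\infty}\psi_S(r)$, so the mass $1-\mathcal{Q}_S$ sitting at $\Delta T=\infty$ is genuinely part of the event $\{N_S(t)=0\}$. Substituting gives
\[
\mathbb{P}[M(t)=m] = \Big[(1-\mathcal{Q}_S)+\textstyle\sum_{r=t+1}^{\infty}\psi_S(r)\Big]\Phi_{II}^{(m)}(t) + \sum_{r=1}^t \psi_S(r)\,\Phi_{II}^{(m)}(r).
\]

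It then remains only to recast this in the indicator form of the statement. Using $\Theta(r-t-1)=1$ iff $r\geq t+1$ and $\Theta(t-r)=1$ iff $r\leq t$, I would recognize $\sum_{r=t+1}^\infty \psi_S(r)\Phi_{II}^{(m)}(t)=\sum_{r=1}^\infty \psi_S(r)\Phi_{II}^{(m)}(t)\Theta(r-t-1)$ together with $\sum_{r=1}^t \psi_S(r)\Phi_{II}^{(m)}(r)=\sum_{r=1}^\infty \psi_S(r)\Phi_{II}^{(m)}(r)\Theta(t-r)$, which collects exactly into the claimed expression, leaving the isolated correction term $(1-\mathcal{Q}_S)\Phi_{II}^{(m)}(t)$. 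An equivalent route, which I would mention as a cross-check, is to condition directly on $\Delta T\in\mathbb{N}\cup\{\infty\}$ and apply the averaging identity \eqref{averaging_feature_defective} to $f(r)=\mathbb{P}[M(t)=m\mid\Delta T=r]$, observing that $f(\infty)=\lim_{r\to\infty}f(r)=\Phi_{II}^{(m)}(t)$ since $M(t)=N_{II}(t)$ whenever the process is never stopped. The only genuinely delicate point — and the one this entire subsection is engineered to resolve — is the correct bookkeeping of the infinite-time mass $1-\mathcal{Q}_S$; once \eqref{survial_def_finitetime} is available, the remaining manipulations are the same algebra as in Theorem \ref{first-theorem}.
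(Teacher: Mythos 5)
Your proposal is correct and follows essentially the same route as the paper: the paper's proof likewise recalls the finite-time decomposition $\mathbb{P}[M(t)=m]=\Phi_S^{(0)}(t)\Phi_{II}^{(m)}(t)+\sum_{r=1}^{t}\psi_S(r)\Phi_{II}^{(m)}(r)$ from Theorem \ref{first-theorem} (valid verbatim for defective $\psi_S$, since the split on $\{N_S(t)=0\}$ versus $\{N_S(t)\geq 1\}$ only involves finite stopping values) and then substitutes the defective survival probability \eqref{survial_def_finitetime} to isolate the mass $1-\mathcal{Q}_S$. Your additional cross-check via Proposition \ref{averaging_defective_varibal} with $f(r)=\mathbb{P}[M(t)=m\mid\Delta T=r]$ is a nice consistency remark but not needed.
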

\begin{proof}
    Recalling
    \begin{equation}
    \label{state_M_arrival}
        \mathbb{P}[M(t)=m]= \Phi_S^{(0)}(t)\Phi_{II}^{(m)}(t)+ \sum_{r=1}^{t} \psi_S(r)\Phi_{II}^{(m)}(r),
    \end{equation}
    the result follows from Theorem \ref{first-theorem} taking into account \eqref{survial_def_finitetime}.
\end{proof}
We observe that the normalization condition 
%
%
remains fulfilled when the stopping time is defective
\beq
\label{norma_defect_state}
\sum_{m=0}^{\infty}\mathbb{P}[M(t)=m] = \Phi_S^{(0)}(t) + \sum_{r=1}^{t} \psi_S(r) = 1
\eeq
by virtue of (\ref{survial_def_finitetime}) and $\sum_{m=0}^{\infty}\Phi_{II}^{(m)}(t)=1$.
The probability that $M(t)$ exceeds $m_0$, with $m_0\in \mathbb{N}_0$, then writes
\beq
\label{exceed_prob}
\begin{array}{clr} 
\ds \mathbb{P}[M(t) >m_0] & = \ds \mathbb{E} \Theta(\Delta T-1-t) \mathbb{E} \Theta(t-J_{m_0+1}) +
\mathbb{E} \Theta(t-\Delta T)\Theta(\Delta T-J_{m_0+1})    & \\ 
 & =\ds  \Phi_S^{(0)}(t)\mathbb{P}[N_{II}(t) >m_0] +\sum_{r=1}^t\psi_S(r)\mathbb{P}[N_{II}(r) >m_0]. &
 \end{array}
\eeq

The infinite time limit yields
\beq
\label{exceed_probinfinite-time}
\mathbb{P}[M(\infty) >m_0] = \Phi_S^{(0)}(\infty)\mathbb{P}[N_{II}(\infty) >m_0] + \sum_{r=1}^{\infty}\psi_S(r)\mathbb{P}[N_{II}(r) >m_0] 
\eeq
where in the second term we used
\begin{align}
    \lim_{t\to \infty}\mathbb{E}\Theta(t-\Delta T)f(\Delta T) =\lim_{t\to \infty}\left\{ \sum_{r=1}^{t}\psi_S(r)f(r) + \Theta(t-\infty)f(\infty) \right\} =\sum_{r=1}^{\infty}\psi_S(r)f(r)
\end{align}
with $\lim_{t\to \infty}\Theta(t-\infty)=\lim_{t\to \infty} \{\lim_{r\to \infty}\Theta(t-r)\}=0$ where first we let $r\to \infty$ while $t$ is kept constant to apply Theorem \ref{psi_s_defect_theorem} and then we consider $t\to \infty$.
Since $N_{II}(t)$ is of type II
we know that $\lim_{t\to\infty}\mathbb{P}[N_{II}(t) >m_0] = 1$. Since $\Phi_S^{(0)}(\infty)= 1-{\cal Q}_S$, the probability that $M(t)$ is never stopped reads
\beq
\label{Lambda-M}
\Lambda_M =\lim_{m_0\to \infty}\mathbb{P}[M(\infty) >m_0] = 1-{\cal Q}_S + \lim_{m_0\to \infty} \sum_{r=1}^{\infty}\psi_S(r)\mathbb{P}[N_{II}(r) >m_0].
\eeq
Now take into account that
$\mathbb{P}[N_{II}(t)>m_0] =0$ for $t \leq m_0$ as $N_{II}(t) \leq t$. Therefore,
\beq
\label{reconsider_prob_inf_t}
\sum_{r=1}^{\infty} \psi_S(r) \mathbb{P}[N_{II}(r)>m_0] = \sum_{r=m_0+1}^{\infty} \psi_S(r) \mathbb{P}[N_{II}(r)>m_0]  \leq \sum_{r=m_0+1}^{\infty} \psi_S(r) = {\cal Q}_S-\sum_{r=1}^{m_0} \psi_S(r)
\eeq
and hence
\beq
\label{LambdaM_general}
\Phi_S^{(0)}(\infty) =
1-{\cal Q}_S \leq \Lambda_M \leq \lim_{m_0\to \infty}\bigg[ 1 - \sum_{r=1}^{m_0} \psi_S(r) \bigg]= 
\lim_{m_0 \to \infty} \Phi_S^{(0)}(m_0) = 1-{\cal Q}_S 
\eeq
thus $\Lambda_M=1-{\cal Q}_S$. 
The previous result suggests the following interpretation.
\begin{itemize}
    \item {\it  If the stopping time $\Delta T$ is defective, 
    %
    %
    %
    %
    then $M(t)$ is an IAP, and it is never stopped with probability $\Lambda_M=1-{\cal Q}_S = \Phi_S^{(0)}(\infty) <1$;} 
    \item {\it If the stopping time $\Delta T$ is non-defective, 
    %
    %
    then we have $\Lambda_M=0$, $M(t)$ is stopped almost surely and hence it is of type I.}
    \item {\it If ${\cal Q}_S \to 0+$, one has that $M(t)$ is of type $II$ (see (\ref{conditional-process})) and $\Lambda_M \to 1-$.}
\end{itemize}
\subsubsection{Defective geometrically-distributed \bm{$\Delta T$}}
%
%
%
Here we assume the stopping time $\Delta T$ to be defective with $\psi_S(t) = \mathcal{Q}_S p q^{t-1}$ 
where $\mathcal{Q}_S \in (0,1]$.
%
%
%
%
%
%
Then, as mentioned above, $M(t)$ is an IAP which is not stopped with probability $\Lambda_M=1-\mathcal{Q}_S$.
If $\mathcal{Q}_S \to 1-$ then $M(t)$ is of type I, whilst if $\mathcal{Q}_S \to 0+$ then $M(t)$ if of type II.
The state probabilities (\ref{prob_mass_defect}) and their GF read 
\beq
\label{state_defect_berBer_sto}
P_m(t) = (\Lambda_M+\mathcal{Q}_S q^t) \Phi_{II}^{(m)}(t) + \mathcal{Q}_S\frac{p}{q}\left[-\delta_{m,0}+\sum_{r=0}^t q^r \Phi_{II}^{(m)}(r)\right],
\eeq
\beq
\label{GFPm_defective_Ber}
\ds {\overline P}_m(u)  = \Lambda_M \frac{1- {\overline \psi}_{II}(u)}{1-u}[{\overline \psi}_{II}(u)]^m   +\frac{\mathcal{Q}_S}{q(1-u)}\left([1-{\overline \psi}_{II}(qu)][{\overline \psi}_{II}(qu)]^m  - p\delta_{m,0}\right),
\eeq
where \eqref{state_defect_berBer_sto}
and \eqref{GFPm_defective_Ber} respectively specialize to (\ref{state_Ber_sto}) and (\ref{GFPm}) for $\mathcal{Q}_S=1$.

Of interest is the infinite time limit 
\beq
\label{inifinite_timelimitPms}
P_m(\infty)  = \lim_{u\to 1} (1-u){\overline P}_m(u) =\frac{\mathcal{Q}_S}{q}[1-{\overline \psi}_{II}(q)][{\overline \psi}_{II}(q)]^m  -\mathcal{Q}_S\frac{p}{q}\delta_{m,0}
\eeq
where the first term in (\ref{GFPm_defective_Ber}) necessarily yields zero and for $\mathcal{Q}_S=1$ 
(\ref{inifinite_timelimitPms}) recovers (\ref{long-timePm}).
The following feature appears noteworthy. 
Since for any finite $t$, 
%
%
\begin{align}
\lim_{t\to \infty} \sum_{m=0}^{\infty} P_m(t) = 1 \neq \sum_{m=0}^{\infty} P_m(\infty) =\mathcal{Q}_S.
\end{align}
The reason for the difference lies in relation (\ref{averaging_feature_defective}). 

Summing up (\ref{GFPm_defective_Ber}) over $m$ takes us to
the GF of the state polynomial 
${\overline \Pi}_{\text{\it IAP}}(v,u)= \sum_{t=0}^{\infty}u^t \mathbb{E} v^{M(t)}$ which can be written as
\beq
\label{state_pol_IAP}
{\overline \Pi}_{\text{\it IAP}}(v,u) = 
\Lambda_M {\overline {\cal P}}_{II}(v,u) + \frac{\mathcal{Q}_S}{q(1-u)}\left(\frac{1-{\overline \psi}_{II}(qu)}{1-v{\overline \psi}_{II}(qu)}-p\right) , \hspace{1cm} |v| \leq 1, |u|<1 
\eeq
where ${\overline {\cal P}}_{II}(v,t)$ is the GF (\ref{state_poly}) associated to $N_{II}(t)$. 
For $\mathcal{Q}_S \to 1-$, 
(\ref{state_pol_IAP}) reduces to (\ref{GF_state_pol}) for the type I process $M(t)$.

Of interest is the large time asymptotics 
\beq
\label{large_time_state}
\mathbb{E} v^{M(\infty)} = \lim_{t\to \infty} \Pi_{\text{\it IAP}}(v,t) = \lim_{u\to 1-}(1-u){\overline \Pi}_{\text{\it IAP}}(v,u) = \frac{\mathcal{Q}_S}{q}\left(\frac{1-{\overline \psi}_{II}(q)}{1-v{\overline \psi}_{II}(q)}-p\right),      \quad |v| \leq 1.
\eeq
The non-negativity of this expression can be easily seen from ${\cal \psi}_{II}(q) < q$, $q\in (0,1)$. This result remains true for complex $v=e^{i\varphi}$, $\varphi \in (0,2\pi)$, whereas for $v=1$ and taking into account (\ref{state_pol_IAP}), we have that the limit $\lim_{t \to \infty}\Pi_{\text{\it IAP}}(1,t) = 1$. Furthermore, (\ref{large_time_state}) retrieves (\ref{GF_Pm}) for $\mathcal{Q}_S=1$.
\\
In the following examples we will consider the first two moments of $M(t)$ whose GFs take the general forms
\beq
\label{fist_mom_DBP}
\overline{\mathbb{E} {M}}(u) = \frac{\partial}{\partial v} {\overline \Pi}_{\text{\it IAP}}(v,u)\bigg|_{v=1} =
\frac{1}{1-u} \left(\Lambda_M \frac{{\overline \psi}_{II}(u)}{1-{\overline \psi}_{II}(u)} + 
\frac{\mathcal{Q}_S}{q}\frac{{\overline \psi}_{II}(qu)}{1-{\overline \psi}_{II}(qu)} \right),
\eeq
\beq
\label{second_mom_DBP}
\overline{\mathbb{E}{M^2}}(u) = 
\Lambda_M\frac{{\overline \psi}_{II}(u)[1+{\overline \psi}_{II}(u)]}{(1-u)[1-{\overline \psi}_{II}(u)]^2}
+\frac{\mathcal{Q}_S}{q}\frac{{\overline \psi}_{II}(qu)[1+{\overline \psi}_{II}(qu)]}{(1-u)[1-{\overline \psi}_{II}(qu)]^2}.
\eeq
%
%
%

%
%
%
%

%
%
\paragraph{ Bernoulli process stopped by a defective geometrically-distributed 
\bm{$\Delta T$} }
%
%

\vspace{2mm} Let $\psi_S(t)= \mathcal{Q}_Spq^{t-1}$, $\mathcal{Q}_S \in (0,1]$. Let $N_{II}(t)$ be an independent  Bernoulli process with success probability $p_0$ (and $q_0=1-p_0$).
The state polynomial of 
$N_{II}(t)$ is known to be ${\cal P}_{II}(v,t)= (p_0v+q_0)^t$, $t\in \mathbb{N}_0$. Thus, the GF (\ref{state_pol_IAP}) reads
\beq
\label{GF_Dber_Ber}
 {\overline \Pi}_{{\text{\it IAP}}}(v,u) =  \frac{\Lambda_M}{1-u{\cal A}(v)} + 
\frac{\mathcal{Q}_S}{1-q{\cal A}(v)}\left(\frac{p{\cal A}(v)}{1-u} + \frac{1-{\cal A}(v)}{1-uq{\cal A}(v)}\right) 
\eeq
where we have set ${\cal A}(v)= p_0v+q_0 = {\cal P}_{II}(v,1)$.
The state polynomial then writes
\begin{align}
\label{GF_Dber_Ber_statepoly}
\ds \Pi_{{\text{\it IAP}}}(v,t)  &= \ds \mathbb{E} v^{M(t)} = (\Lambda_M+\mathcal{Q}_Sq^t){\cal A}(v)^t + \frac{\mathcal{Q}_Sp{\cal A}(v)}{1-q{\cal A}(v)}[1-q^t{\cal A}(v)^t] \notag \\
 & = \ds \Lambda_M{\cal A}(v)^t + \frac{\mathcal{Q}_S}{1-q{\cal A}(v)}\left(p{\cal A}(v)+(1-{\cal A}(v))q^t{\cal A}(v)^t\right)
\end{align}
with $\Pi_{{\text{\it IAP}}}(1,t)=1$ and $\Pi_{{\text{\it IAP}}}(v,0)=1$. Observe that the process $M(t)$ is not Markovian. Actually, it is not even in the non-defective case. Indeed, consider the type I limit $\mathcal{Q}_S=1$, i.e. when 
the geometric stopping time becomes non-defective.
That $M(t)$ is not Markovian can be seen from
$\Pi_{{\text{\it IAP}}}(v,t)\Pi_{{\text{\it IAP}}}(v,1) \neq  \Pi_{{\text{\it IAP}}}(v,t+1)$, that violates the Markov condition. 
In particular, $\Pi_{{\text{\it IAP}}}(v,1)= {\cal A}(v)$ is a function of the Bernoulli parameter $p_0$ of $N_{II}(t)$ only, and not of the stopping probability $p$.
This is because no matter whether $N_{II}$ is stopped at $t=1$ or not, 
we have $M(1)=N_{II}(1)$ a.s., and the latter is independent of $\Delta T$,
%
%
see (\ref{conditional-process}). 
This does not hold true for $t\geq 2$, as $\Pi_{{\text{\it IAP}}}(v,t)$ is a function of both the parameters of $N_{II}$ 
and of $\Delta T$.
%
%
%
%
Hence ${\cal A}(v)^2 \neq \Pi_{{\text{\it IAP}}}(v,2)$, i.e.\ $M(t)$ is not Markovian (except for the special case $\mathcal{Q}_S=0$).
\\
For $|v|<1$ ($|{\cal A}(v)|<1$) we have the stationary value
\begin{align}\Pi_{{\text{\it IAP}}}(v,\infty) = \frac{\mathcal{Q}_S p{\cal A}(v)}{1-q{\cal A}(v)}
\end{align}
%
{which is, indeed, a NESS,  for $\mathcal{Q}_S \in (0,1]$, }
see (\ref{large_time_state}). If $\mathcal{Q}_S=p=1$, 
%
%
($M(t)$ is stopped at $t=1$ a.s.) the stationary value 
is taken already at $t=1$ with $\Pi_{{\text{\it IAP}}}(v,\infty)= \mathbb{E} v^{N_{II}(1)} = \mathcal{A}(v)$.
Clearly (\ref{GF_Dber_Ber_statepoly}) is strictly positive for $v \in [0,1]$ and
\beq
\label{Piv0}
\Pi_{{\text{\it IAP}}}(v,t)\bigg|_{v=0} = \mathbb{P}[M(t)=0] = \Lambda_M q_0^t + \frac{\mathcal{Q}_S}{1-q q_0}\left(pq_0+p_0q^t q_0^t\right)
\eeq
consistently with (\ref{state_defect_berBer_sto}) and considering the initial condition $\mathbb{P}[M(0)=0]=1$.
We point out that the state polynomial (\ref{GF_Dber_Ber_statepoly})
is an absolutely monotonic (AM) and non-decreasing function with respect to $v$, i.e. for $n \in \mathbb{N}_0$,
\beq
\frac{\partial^n}{\partial v^n} \Pi_{{\text{\it IAP}}}(v,t) \geq 0.
\eeq
From the relation between AM functions and CM functions \cite{bernstein1929}, the non-negativity of the state probabilities and moments of $M(t)$ follows. The absolute monotonicity can be easily verified in (\ref{GF_Dber_Ber_statepoly})
as $\frac{1-q^t{\cal A}(v)^t}{1-q{\cal A}(v)} = \sum_{r=0}^{t-1} q^r{\cal A}(v)^r$ and taking into account that $q^r{\cal A}(v)^r$ belongs to the AM class.
Now, let us consider the first two moments. We have
\beq
\label{IAM_1st_moment}
\mathbb{E} M(t) =  \Lambda_M p_0t+\frac{\mathcal{Q}_Sp_0}{p}(1-q^t):=\Lambda_M p_0t +\mathcal{Q}_SB(t), \qquad \mathbb{E} M(0) =0.
\eeq
For large times the dominating term in the first moment is  $\Lambda_M p_0t$ as $N_{II}(t)$ is eventually not stopped with probability $\Lambda_M$. 
 In Fig.~\ref{Expected_M} we depict $\mathbb{E} M(t)$ from (\ref{IAM_1st_moment}) for several values of $\mathcal{Q}_S$ (colored curves) and we compare them with Monte Carlo simulations.
For large times $\mathbb{E} M(t)$ behaves linearly. In particular, for $\mathcal{Q}_S=1$ 
the finite asymptotic value $\mathbb{E} M(\infty)=p_0/p=3.5$ is approached.
%
%

%
%
\begin{figure}
%
%
\centerline{
\includegraphics[width=0.7\textwidth]{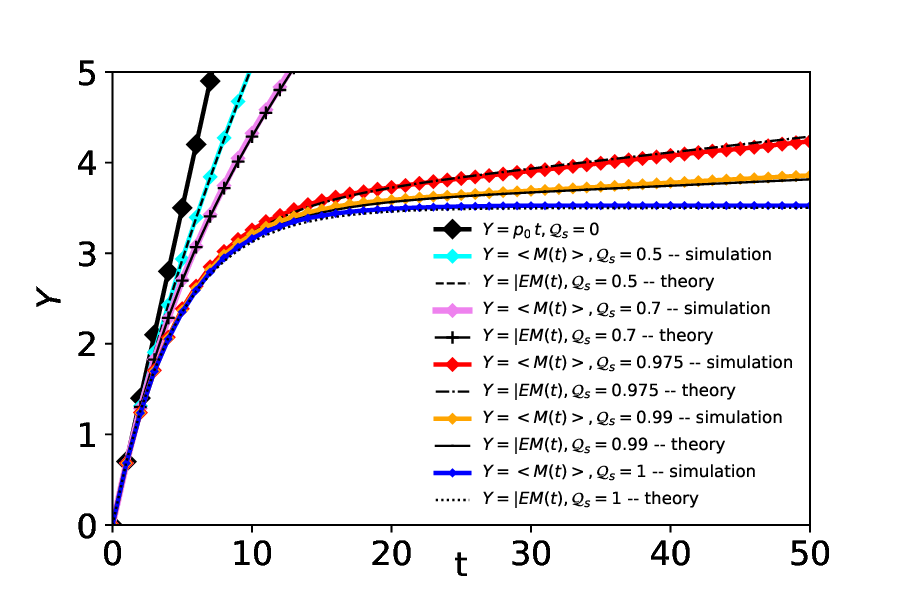}
}
\caption{ 
$\mathbb{E} M(t)$ from (\ref{IAM_1st_moment}) for several values of $\mathcal{Q}_S$ and with parameters $q_0=0.3$, $q=0.8$ (black curves). 
The colored curves represent sample averages of the process over $1500$ realizations obtained by Monte Carlo simulations.
}
\label{Expected_M}
\end{figure}
\noindent The second moment reads
\begin{eqnarray}
\label{M_second}
\ds \mathbb{E} M^2(t) &=& \ds \left(\frac{\partial^2}{\partial v^2} + \frac{\partial}{\partial v}\right) \Pi_{\textbf{\it IAP}}(v,t)\bigg|_{v=1} \notag \\
&=&\ds 
\Lambda_M(p_0^2t^2 +p_0q_0t) + \mathcal{Q}_S \left( \frac{2p_0^2q}{p^2}(1-q^t -ptq^{t-1}) + \frac{p_0}{p}(1-q^t) \right) \notag \\ 
&:=& \ds \Lambda_M (p_0^2t^2 +p_0q_0t) + \mathcal{Q}_S C(t). 
\end{eqnarray}
We are now ready to calculate the variance:
\begin{align}
\label{variance_defber}
\ds \mathbb{V}\text{ar}{M(t)} & =  \Lambda_M\left[\mathcal{Q}_Sp_0^2t^2+p_0q_0t\right] -2\Lambda_M\mathcal{Q}_Sp_0tB(t)+\mathcal{Q}_S\left[C(t)-\mathcal{Q}_SB^2(t)\right] \notag \\
 & = \ds C(t)-B^{\, 2}(t) +\Lambda_M[2B(t)(B(t)-p_0t) - C(t) + p_0^2t^2 +p_0q_0t] - \Lambda_M^{\, 2}\, (p_0t-B(t))^2.
\end{align}
The main feature is that the variance of $M(t)$ exhibits a $t^2$-behavior for large times (in contrast to the linear behavior of the unstopped Bernoulli emerging for $\Lambda_M=1$). This result can be interpreted that the defective Bernoulli stopping process introduces large fluctuations. 
In Fig. \ref{Var_plot} we depict  the variance (\ref{variance_defber}) for several values of the defect parameter $\mathcal{Q}_S$ starting from $\mathcal{Q}_S=1$
for which $M(t)$ is of type I and is eventually stopped, and the variance has a finite asymptotic value. 
For decreasing $\mathcal{Q}_S$, 
%
%
$\Delta T$ struggles to stop $N_{II}(t)$.
For $\mathcal{Q}_S \to 0+$ the linear behavior
of the  Bernoulli $N_{II}(t)$ emerges (black line).
In expression (\ref{variance_defber}) one can see that the coefficient $\Lambda_M\mathcal{Q}_S$ of the term $t^2$ has its maximum
value for $\Lambda_M=\mathcal{Q}_S=\frac{1}{2}$ where the intermediate feature of $M(t)$ is most pronounced and $\mathbb{V}\text{ar} M(t) \sim  \frac{p_0^2}{4} t^2$.
In general, the variance $\mathbb{V}\text{ar}M(t)$ is a parabolic function of $\Lambda_M$.
The value of $\Lambda_M$ maximizing $\mathbb{V}\text{ar} M(t)$ turns out to be 
\beq
\label{Max_Ps}
\Lambda_{M,\max}(t) = \frac{1}{2[p_0t-B(t)]^2}\left[p_0^2t^2+p_0q_0t -C(t) + 2B(t)(B(t)-p_0t)\right] , \qquad t \geq 2,
\eeq
where the asymptotic value $\Lambda_{M,\max}(t)$ is independent of $q$ and $q_0$ for $t \to \infty$ and rapidly approaches $1/2$ (Fig.~\ref{Psmax}). Hence, in this case we identify the IAP $M(t)$ with $\Lambda_M=1/2$ as the most fluctuating one in the long time limit.
We depict the time dependence of the variance for $\Lambda_M=1/2$ in Fig.~\ref{Var_plot} (cyan curve), and of $\Lambda_{M,\max}(t)$ in Fig.\ \ref{Psmax}.
This suggests that IAPs exhibit  large fluctuations in the large time limit.
\begin{figure}[H]
\centerline{\includegraphics[width=0.9\textwidth]{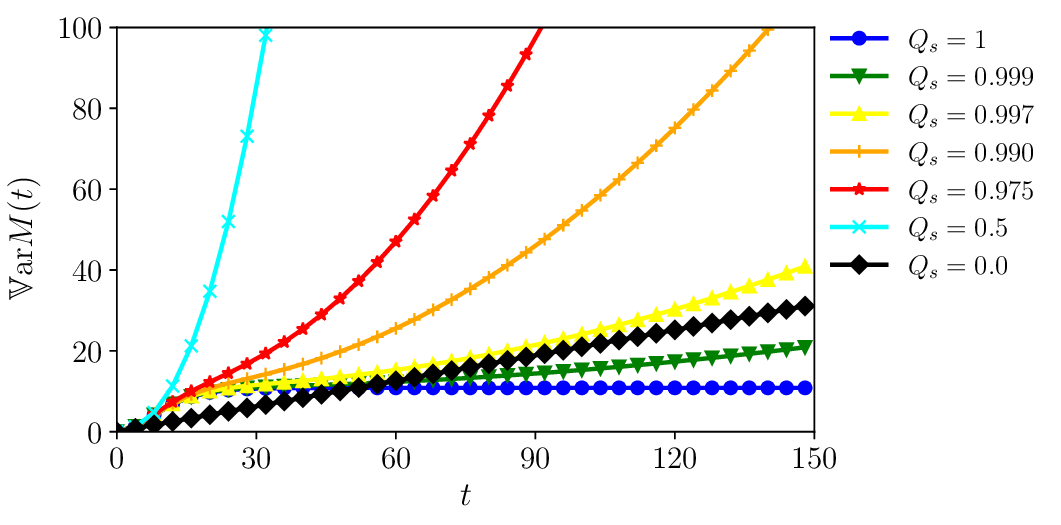}}
\vspace{-3mm}
\caption{The variance (\ref{variance_defber}) as a function of time for several choices of $\mathcal{Q}_S$. We have chosen here in all curves $q_0=0.3$ as the parameter of the  Bernoulli $N_{II}(t)$ while the stopping time parameter $q=0.8$. For $\mathcal{Q}_S=1$ (blue curve) the AP $M(t)$ is of type I and the variance has the finite asymptotic value $\mathbb{V}\text{ar}M(\infty) = \frac{2p_0^2q}{p^2}+\frac{p_0}{q} \approx 10.85$. For $\mathcal{Q}_S=0$ we have a type II limit $M(t)=N_{II}(t)$ where the variance increases linearly 
For $\mathcal{Q}_S \in (0,1)$ the variance exhibits second-order large time asymptotics corresponding to large fluctuations which are most pronounced in the
case $\mathcal{Q}_S=1/2$ (cyan curve).}
\label{Var_plot}
\end{figure}
\begin{figure}
\centerline{\includegraphics[width=0.7\textwidth]{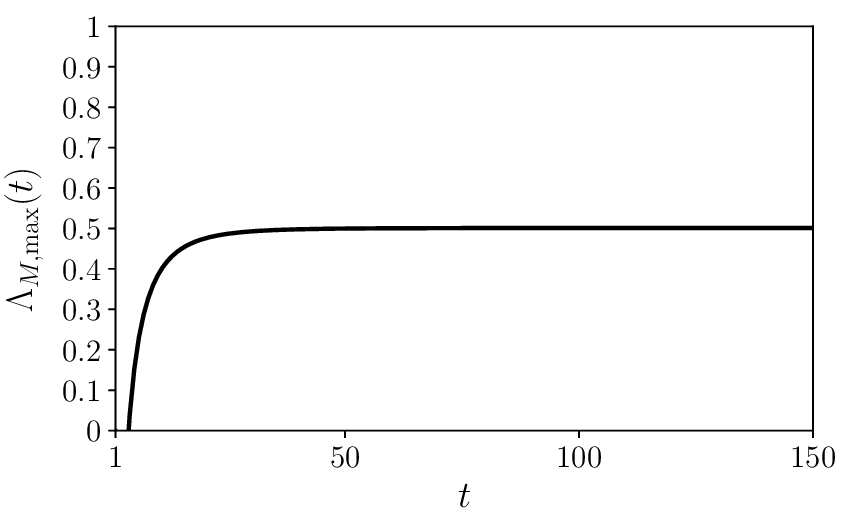}}
\vspace{-3mm}
\caption{ Time dependence of $\Lambda_{M,\max}(t)$ (formula (\ref{Max_Ps})) for which the fluctuations (\ref{variance_defber}) take a maximum. We observe that the asymptotic value $1/2$ is rapidly approached. The other parameters are $q_0=0.3$ and $q=0.8$ as in Fig. \ref{Var_plot}.}
\label{Psmax}
\end{figure}
%
%

%
%
\subsubsection{Bernoulli process stopped by a defective Sibuya \bm{$\Delta T$}}
\label{defective_Sibuya_stop}
Let $N_{II}$ be a Bernoulli process with parameter $p_0$ which is stopped by a defective Sibuya $\Delta T$, see (\ref{DSP_den}). 
%
%
%
%
%
%
The resulting AP $M(t)$ clearly is an IAP for $\mathcal{Q}_s \in (0,1)$.
Using relation (\ref{higher_moment}) and the survival probability (\ref{large-time-survival}) yields
\beq
\label{expected_events_def_sib_stop}
\mathbb{E} M(t) = \left [\mathcal{P}_s + \mathcal{Q}_s (-1)^{t} \binom{\mu-1}{t} \right] p_0t +p_0\mathcal{Q}_s\sum_{r=1}^t (-1)^{r-1} \binom{\mu}{r} r
\eeq
where, for $t\geq 1$,
$$
 (-1)^{t} \binom{\mu-1}{t}\, t =  \frac{1}{\Gamma(1-\mu)} \frac{\Gamma(t-\mu+1)}{\Gamma(t)} , \hspace{1cm} \sum_{r=1}^t (-1)^{r-1} \binom{\mu}{r} r = 
 \frac{\mu}{\Gamma(2-\mu)} \frac{\Gamma(t-\mu+1)}{\Gamma(t)}.
$$
Plugging these expressions into (\ref{expected_events_def_sib_stop}) we obtain 
\beq
\label{expected_events_def_sib_stop_result}
\mathbb{E} M(t)  = \mathcal{P}_s p_0t + \mathcal{Q}_s\, p_0 \, \frac{\Gamma(t-\mu+1)}{\Gamma(2-\mu) \, \Gamma(t) } , \hspace{1cm} t \geq 1
\eeq
and $\mathbb{E} M(0) =0$.
%
%
%
%
Interestingly, as $t$ goes to infinity
\beq
\label{large_time_def_sib_stop}
\mathbb{E} M(t)  \sim  \mathcal{P}_s p_0 t + \mathcal{Q}_s\, p_0 \, \frac{t^{1-\mu}}{\Gamma(2-\mu)},  
\qquad \mu \in (0,1).
\eeq
%
%
%
\\
Remarkable is the non-defective limit $\mathcal{Q}_s =1$ in which the stopping time is drawn from a Sibuya distribution and
$\mathbb{E} M(t)$ increases as a power of order $1-\mu$. This behavior reflects the fat-tailed feature of the Sibuya stopping time:
despite each realization of $M(t)$ is stopped a.s. 
 all integer moments diverge. 
\\
Fig.~\ref{Ber_defective_Sib_stopped} shows a comparison between Monte Carlo simulations and the analytical result (\ref{expected_events_def_sib_stop_result}).
%
%
\begin{figure}
\centerline{\includegraphics[width=0.7\textwidth]{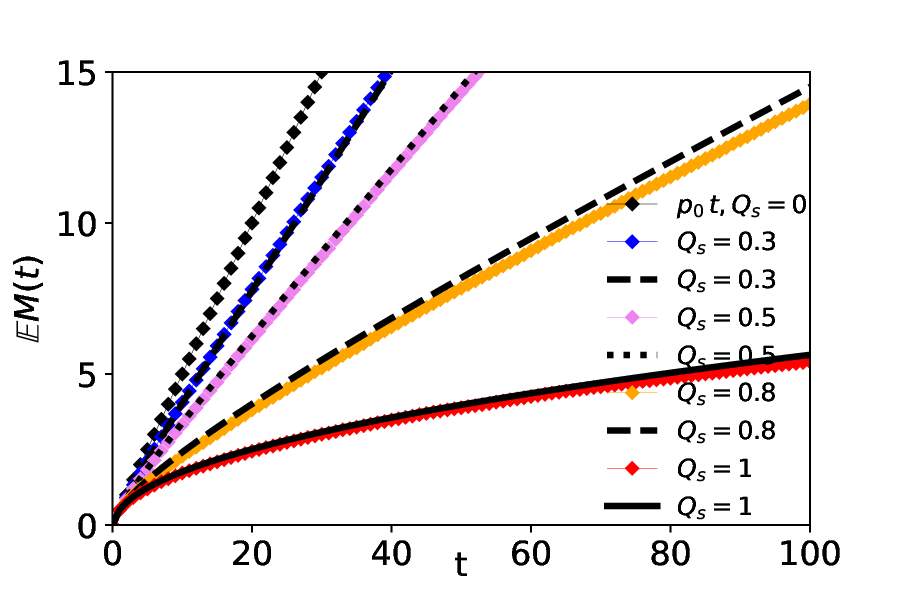}}
\vspace{-3mm}
\caption{Bernoulli process with $p_0=0.5$ and defective Sibuya stopping time ($\mu=0.5$) for several values $\mathcal{Q}_s$. 
Colored curves refer to Monte Carlo
simulations ($1500$ realizations), black curves show the corresponding analytical expressions, see  (\ref{expected_events_def_sib_stop_result}). }
\label{Ber_defective_Sib_stopped}
\end{figure}
\begin{figure}
\centerline{\includegraphics[width=0.7\textwidth]{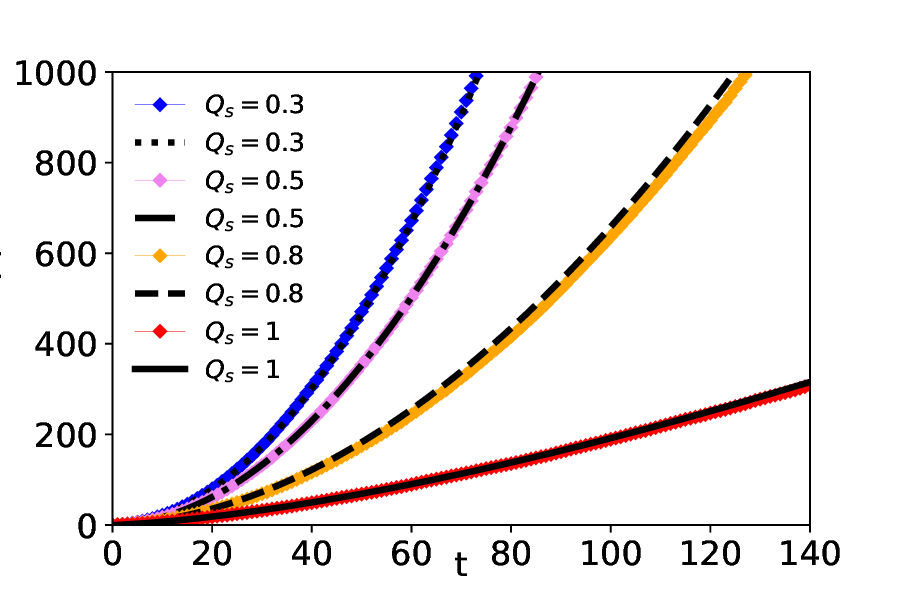}}
\vspace{-3mm}
\caption{
Bernoulli process with $p_0=0.5$ 
and defective Sibuya stopping time ($\mu=0.5$) for several values $\mathcal{Q}_s$. 
Colored curves refer to Monte Carlo
simulations ($2000$ realizations), black curves show the corresponding  analytical expression, see (\ref{second_mo_def_sib_stp_ber}).
}
\label{Ber_defective_Sib_stopped_second_moment}
\end{figure}
Similarly, by using again relation (\ref{higher_moment}) we also derive the second moment.
Using the relations
$$
 (-1)^{t} \binom{\mu-1}{t}\, t(t-1) =  \frac{1}{\Gamma(1-\mu)} \frac{\Gamma(t-\mu+1)}{\Gamma(t-1)} , \hspace{1cm} t \geq 2 
$$
$$\sum_{r=1}^t (-1)^{r-1} \binom{\mu}{r} r(r-1) =   \frac{\mu}{(2-\mu)\Gamma(1-\mu)} \frac{\Gamma(t-\mu+1)}{\Gamma(t-1)} , \hspace{1cm} t \geq 2.  
$$
we get the explicit expression 
%
%
\begin{align}
\label{second_mo_def_sib_stp_ber}
\ds \mathbb{E} M^2(t) &=\ds  \left\{\mathcal{P}_s + \mathcal{Q}_s (-1)^{t} \binom{\mu-1}{t} \right\}[p_0^2t(t-1)+p_0t]+\mathcal{Q}_s\sum_{r=1}^t (-1)^{r-1} \binom{\mu}{r} [p_0^2r(r-1)+p_0r]  \nonumber\\[3ex]
& =\ds  \mathbb{E} M(t) +\mathcal{P}_sp_0^2t(t-1) + \frac{2\mathcal{Q}_sp_0^2 \Theta(t-2)}{(2-\mu)\Gamma(1-\mu)}\frac{\Gamma(t-\mu+1)}{\Gamma(t-1)}  \nonumber \\
& = \ds \mathcal{P}_s(p_0q_0t + p_0^2t^2) + 
\mathcal{Q}_s\, p_0 \, \frac{\Theta(t-1)\Gamma(t-\mu+1)}{\Gamma(2-\mu) \, \Gamma(t) } +\frac{2\mathcal{Q}_sp_0^2 \Theta(t-2)}{(2-\mu)\Gamma(1-\mu)}\frac{\Gamma(t-\mu+1)}{\Gamma(t-1)},
\end{align}
where the discrete Heaviside functions in this expression account for the respective support of the involved terms.
As long as the stopping time is defective, the second moment behaves asymptotically as $t^2$, inheriting this feature from the standard Bernoulli process.
\\
Worthy of mention is the case $\mathcal{Q}_s =1$ where the large time asymptotics is dominated by the $t^{2-\mu}$ term: $$\mathbb{E} M^2(t) \sim 
\frac{2\mathcal{Q}_sp_0^2 }{(2-\mu)\Gamma(1-\mu)} \, t^{2-\mu}.$$
The exponent is in the range $1 < 2-\mu <2$ and can be connected to superdiffusion in a random walk framework \cite{MetzlerKlafter2000}. 

Fig.~\ref{Ber_defective_Sib_stopped_second_moment} shows a comparison between  
the exact expression (\ref{second_mo_def_sib_stp_ber}) (black curves) and Monte Carlo simulations (colored curves) for the same cases as in Fig.~\ref{Ber_defective_Sib_stopped}. The second moment $\mathbb{E} M^2(t)$ increases as $\mathcal{Q}_s$ decreases. 
%
%
%
%
%

\subsection{IAPs and discrete Bernstein functions}
\label{discrete_Bernsetin}
We assume that the PDF $\psi_S(r)$ is a superposition of geometric densities 
$pq^{r-1}$ where we put for our convenience $p=1-e^{-\lambda}$, $\lambda \in (0,\infty)$, namely
\begin{align}
\label{superpos_geo}
\psi_S(r) = \int_0^{\infty} (1-e^{-\lambda})e^{-\lambda(r-1)} \, a({\rm d}\lambda) ,  \hspace{1cm} r\in \mathbb{N}.
\end{align}
Here $a$ is a
probability measure on $\mathbb{R}_+$, which we allow to be defective. Equation  (\ref{superpos_geo}) can be read as a linear combination $\psi_S(r)={\hat a}(r-1)-{\hat a}(r)$ of Laplace transforms
\begin{align}
{\hat a}(t) = \int_0^{\infty}e^{-\lambda t}a({\rm d}\lambda).
\end{align}
The normalization directly follows:
\beq
\label{norma}
\sum_{r=1}^{\infty}\psi_S(r) =\int_0^{\infty} a({\rm d}\lambda) = {\cal Q}_S
\eeq
where ${\cal Q}_S <1 $ if $\Delta T$ is defective
(which corresponds to $a$ being defective). While, in the case ${\cal Q}_S=1$, $a$ is non-defective.

\begin{rmk}
Note that $\psi_S(r) = \mathbb{E} \bigl(\mathbb{P}[Y(1) = r] \bigr)$, where $(Y(\tau))_{\tau \in \mathbb{R}_+}$ is a pure birth process in continuous time, independent of 
%
$\Delta T$ and $N_{II}(t)$, and with random individual birth rate with distribution $a$.
\end{rmk}
Let us briefly highlight the connections of (\ref{superpos_geo}) with discrete versions of Bernstein functions and discrete complete monotonicity \cite{jedidi} 
(consult \cite{Schilling2012,Widder1941} for their continuous counterparts).
To this end, let us introduce the discrete difference operator $D= 1-{\cal T}_{-1}$ with
${\cal T}_{-r}f(t) = f(t-r)$, where $t,r \in \mathbb{Z}$. 
\begin{defin}
\label{CM_discrete}
We call a causal discrete function $f(t) \geq 0$, $t \in \mathbb{N}_0$, discrete completely monotone (DCM) if
\beq
\label{CM}
(-1)^nD^nf(t) \geq 0 , \hspace{1cm} t \geq n , \hspace{1cm} n \in \mathbb{N}_0
\eeq
and  discrete Bernstein (DB) if
\beq
\label{BF}
(-1)^{n-1}D^n f(t) \geq 0 , \hspace{1cm} t \geq n , \hspace{1cm} n \in \mathbb{N}.
\eeq
\end{defin}
\noindent
A basic DCM function is $e^{-\lambda t}$ for $\lambda \geq 0$. Indeed, $(-1)^nD^ne^{-\lambda t}=(e^{\lambda}-1)^n e^{-\lambda t}$ (remaining causal for $t\geq n$). Then, we can infer that any superposition for positive $\lambda$ is DCM, namely
\beq
\label{superposition_measure}
g(t)= \int_0^{\infty} e^{-\lambda t} \nu({\rm d}\lambda) 
\eeq
for any integrable non-negative measure $\nu({\rm d}\lambda)$ (see e.g.\ \cite{jedidi}). This representation of DCM functions can be seen as the discrete version of Bernstein's theorem (also known as Hausdorff–Bernstein–Widder theorem), which justifies (\ref{superpos_geo})
where $\psi_S(r)$ is in the class of DCM functions. This remains true in both the defective and non-defective cases.
It is worthy of mention that the cumulative distribution function
\beq
\label{CDF}
1-\Phi_S^{(0)}(t) = \sum_{r=1}^t \psi_S(r) =  
\int_0^{\infty} (1-e^{-t\lambda}) \, a({\rm d}\lambda) , \hspace{1cm} t \in \mathbb{N}_0
\eeq
is a DB function because so is $(1-e^{-t\lambda})$,
and that  $\Phi_S^{(0)}(t)$ is a DCM function. Moreover, (\ref{CDF}) is in fact the L\'evy--Khintchine representation of the DB function $1-\Phi_S^{(0)}(t)$ where $a$ represents the corresponding L\'evy measure.
\newline
Plugging (\ref{superpos_geo}) into (\ref{Lambda-M}), and recalling  (\ref{GF_Fn0}) leads to
\beq
\label{Lambda-M_limit_lambda-int}
\Lambda_M = 1- {\cal Q}_S + \lim_{m_0\to \infty}  \int_0^{\infty}  e^{\lambda}\, [{\overline \psi}_{II}(e^{-\lambda})]^{m_0+1}a({\rm d}\lambda).
\eeq
Consider now the case $a({\rm d}\lambda) = a(\lambda) {\rm d}\lambda$ where $a(\lambda)$ is a defective or non-defective PDF with respect to $\lambda$.
Note that $e^{\lambda} {\overline \psi_{II}}(e^{-\lambda}) < 1$ for $\lambda >0$ due to the convexity of ${\overline \psi_{II}}(u)$ for $u\in (0,1)$. Now, we rescale $\lambda$ as $m_0^{-\frac{1}{\mu}}\lambda $, $\mu \in (0,1]$,
to get, as $m_0 \to \infty$,
\beq
\label{Lambda-M_limit_lambda-int_rescale}
\Lambda_M \sim 1- {\cal Q}_S + \frac{1}{m_0^{\frac{1}{\mu}}} \int_0^{\infty}  
e^{\lambda m_0^{-\frac{1}{\mu}}}
[{\overline \psi}_{II}(e^{-\lambda m_0^{-\frac{1}{\mu}}})]^{m_0} 
a(\lambda m_0^{-\frac{1}{\mu}}){\rm d}\lambda
\eeq
with the two cases
\beq
\label{two-cases_psi_II}
{\overline \psi}_{II}(e^{-h}) = \ds \left\{ \begin{array}{l} 1-A_1h +o(h) \\ \\ \ds
1-A_{\mu} h^{\mu} +o(h^{\mu}), \hspace{1cm} \mu \in (0,1) 
\end{array}\right.  \hspace{2cm} h \to 0
\eeq
where $A_1,A_{\mu} >0$. The first line refers to the case in which $\psi_{II}(t)$ is light-tailed and has finite mean $A_1=\sum_{r=1}^{\infty}r\psi_{II}(r)$, whereas $\mu<1$ is the case of a fat-tailed $\psi_{II}(t)$ with infinite mean (such as the Sibuya distribution).
We have
\beq
\label{two-cases_psi_II_power}
\ds \lim_{m_0\to \infty}[{\overline \psi}_{II}(e^{-m_0^{-\frac{1}{\mu}} \lambda})]^{m_0} = \left\{ \begin{array}{l} \ds
e^{-A_1\lambda} \\ \\ \ds
e^{-A_{\mu}\lambda^{\mu}}, \hspace{1cm} \mu \in (0,1).
\end{array}\right.  
\eeq
Now we assume in (\ref{superpos_geo}) the following density
\beq
\label{density_a}
a_{\gamma,\zeta}(\lambda)= e^{-\zeta \lambda} \frac{\lambda^{\gamma-1}}{\Gamma(\gamma)} , 
\hspace{1cm} \zeta \geq 1, \hspace{0.5cm} \gamma > 0
\eeq
which is a defective PDF for $\zeta >1$ with ${\cal Q}_S = \zeta^{-\gamma}$ and is a  $\Gamma$-distribution for
$\zeta=1$. In the range $\gamma \in (0,1)$, the density (\ref{density_a}) is weakly singular at $\lambda=0+$ with $a_{\gamma}(\lambda) \sim  \frac{\lambda^{\gamma-1}}{\Gamma(\gamma)}$.
From (\ref{superpos_geo}) we get
\beq
\label{psi_gamma}
\psi_S^{(\gamma,\zeta)}(t)= \frac{1}{(t-1+\zeta)^{\gamma}} - \frac{1}{(t+\zeta)^{\gamma}} , \hspace{1cm}
t \in \mathbb{N}  , 
\hspace{1cm} \zeta \geq 1, \hspace{0.5cm} \gamma > 0
\eeq
that for $\gamma \in (0,1]$
is fat-tailed as $\psi_S^{(\gamma,\zeta)}(t) \sim \gamma t^{-\gamma-1}$, as $t\to \infty$ (i.e.\ with diverging expected absorbing time). For $\gamma > 1$, $\psi_S^{(\gamma,\zeta)}(t)$, exhibits finite mean stopping time in the non-defective case $\zeta=1$. The probability that the process is not stopped up to time $t$ writes
\beq
\label{defective_case}
\Phi_S^{(0),\gamma,\zeta}(t) = 1-\sum_{r=1}^{t}\psi_S^{(\gamma,\zeta)}(r)= 1-\frac{1}{\zeta^{\gamma}}+ \frac{1}{(t+\zeta)^{\gamma}} ,\hspace{1cm} t \in \mathbb{N}_0  , 
\hspace{1cm} \zeta \geq 1, \hspace{0.5cm} \gamma > 0
\eeq
fulfilling the initial condition $\Phi_S^{(0),\gamma,\zeta}(t)\big|_{t=0} =1$ and
$\lim_{t\to \infty} \Phi_S^{(0),\gamma,\zeta}(t) =1-\zeta^{-\gamma}$ remaining positive in the defective case.
\\
Then, the probability that $M(t)$ is never stopped (see (\ref{Lambda})) then reads
\beq
\label{fat-tailedpsi_s}
\Lambda_M = 1-\zeta^{-\gamma} +\lim_{m_0\to\infty} m_0^{-\frac{\gamma}{\mu}} \int_0^{\infty}
e^{-A_{\mu}\lambda^{\mu}} e^{-\lambda(\zeta-1)m_0^{-\frac{1}{\mu}}} \frac{\lambda^{\gamma-1}}{\Gamma(\gamma)} {\rm d}\lambda =
 1-\zeta^{-\gamma} = \Phi_S^{(0),\gamma,\zeta}(\infty)
\eeq
where $e^{-\lambda(\zeta-1)m_0^{-\frac{1}{\mu}}} \to 1$. Hence, the integral converges to a constant
$C = \int_0^{\infty}
e^{-A_{\mu}\lambda^{\mu}} \frac{\lambda^{\gamma-1}}{\Gamma(\gamma)} {\rm d}\lambda$ with
$\Lambda_M \sim 1-\zeta^{-\gamma} +  m_0^{-\frac{\gamma}{\mu}} C$. 

By suitably choosing $\zeta$,  in the general DCM class (\ref{superpos_geo}) with density (\ref{density_a}) we generate examples within each family of arrival processes: type I, IAP and type II for  $\zeta=1$, $\zeta \in (1,\infty)$ and $\zeta=\infty$, respectively. 
%
%
%
%
%
%
%
%
\section{Application to random walks}
\label{RWs}

\subsection{General framework}
\label{gen_fram}
Let $\vec{X}_t$ be the position vector of a discrete time random walk on $\mathbb{Z}^d$ defined as
\beq
\label{random_position}
\vec{X}_t = \sum_{r=1}^{{\mathcal M(t)}} \vec{\xi}_r = \sum_{\ell=1}^{\infty} \vec{\xi}_{\ell} \, \Theta(t-T_{\ell})  , \hspace{1cm} \vec{X}_0 = \vec{0} , \hspace{1cm} t \in \mathbb{N}_0 ,
\eeq
%
where $\vec{\xi}_r$ denote the steps of the walker which we assume to be independent of $\mathcal M(t)$. 
The walker, at $t=0$, starts at the origin. 
Let $\mathcal M(t) = \max(m \in \mathbb{N}_0: T_m \leq t)$ be a 
discrete-time AP 
and let us denote its arrival times as $T_m \in \mathbb{N}$.
%
%
We call $\mathcal M(t)$ the generator process of the walk (\ref{random_position}). Clearly, (\ref{random_position}) is a random sum (see e.g.\ \cite{Pinsky_Karlin}).
We further assume that the steps $\vec{\xi}_r$ and the waiting times $T_m-T_{m-1}$ are independent.
%
The steps $\vec{\xi}_r$ take values in $\mathbb{Z}^d$ and are drawn from the discrete PDF
$\mathbb{P}[\vec{\xi} = \vec{y}] = \mathbb{E} \delta_{\vec{y},\vec{\xi}} = W(\vec{y})$ independent of $\mathcal M(t)$
and are such that their first and second moments are finite.
Per construction (\ref{random_position}) is a random walk time-changed with ${\mathcal M(t)}$, which represents its operational time. If ${\mathcal M(t)}$ is a renewal process, the walk (\ref{random_position}) is a discrete-time version of a Montroll--Weiss type walk \cite{MontrollWeiss1968}.
For the following it is convenient to rewrite (\ref{random_position}) as
\beq
\label{MW_rep}
\vec{X}_t = \sum_{n=0}^{\infty} \Theta(T_n,t,T_{n+1}) \sum_{r=1}^n \vec{\xi}_r,
\eeq
involving the indicator function $\Theta(T_n,t,T_{n+1})$  (see (\ref{Theta-a-b})). 
Further, we denote by $\delta_{\vec{x},\vec{X}_t}$ the indicator function sitting at $\vec{x}$ of the 
position of the walker. The notation
$\delta_{\vec{a},\vec{b}} = \delta_{a_1,b_1}\cdot \ldots \cdot \delta_{a_d,b_d} $ stands for the $d$-dimensional Kronecker symbol, where $a_i$ and $b_i$, $i \in \{1,\dots,d\}$, represent the components of the vectors $\vec{a}$ and $\vec{b}$, respectively. 
Then, considering the representation (\ref{MW_rep}) we have

\beq
\label{indicator}
\delta_{\vec{x},\vec{X}_t} = \sum_{n=0}^{\infty} \delta_{\vec{x}, \sum_{r=1}^n \vec{\xi}_r}\,  \Theta(T_n,t,T_{n+1}) .
\eeq
By taking the expectation of \eqref{indicator} we obtain the propagator, that is the spatial PDF $P(\vec{x},t)$ of the walker, which gives the probability of finding the walker on the lattice point $\vec{x} \in \mathbb{Z}^d$ at time $t$:
\beq
\label{propa}
P(\vec{x},t) 
= \sum_{n=0}^{\infty} \mathbb{E} \Theta(T_n,t,T_{n+1})   \mathbb{E} \delta_{\vec{x}, \sum_{r=1}^n \vec{\xi}_r}.
\eeq
In \eqref{propa} we used the independence between the steps $\vec{\xi}_r$ and the arrival process ${\mathcal M(t)}$. Further, the quantity 
$\mathbb{E} \Theta(T_n,t,T_{n+1})  = \mathbb{P}[{\mathcal M(t)}=n] $, the state probabilities of ${\mathcal M(t)}$,
and $\mathbb{E} \delta_{\vec{x}, \sum_{r=1}^n \vec{\xi}_r}  =  
\mathbb{E} [\delta_{\vec{x},\vec{\xi}} \, \star]^n $ refers to the transition matrix of $n$ independent steps $\vec{\xi}_r$.
The Fourier representation of the spatial PDF is
\begin{eqnarray}
\label{spatial_PDF}
 P(\vec{x},t) &=& \mathbb{E}  \frac{1}{(2\pi)^d} \int_{-\pi}^{\pi} \ldots \int_{-\pi}^{\pi} e^{i\varphi(\vec{x}-\vec{X}_t)\cdot \vec{\varphi}}  {\rm d}\varphi_1 \ldots {\rm d}\varphi_d \notag \\
& = &  \frac{1}{(2\pi)^d} \int_{-\pi}^{\pi} \ldots \int_{-\pi}^{\pi} e^{i\vec{\varphi}\cdot\vec{x}} {\hat P}(\vec{\varphi},t) {\rm d}\varphi_1 \ldots {\rm d}\varphi_d 
\end{eqnarray}
where we used the Fourier representation of the Kronecker symbol $ \delta_{r,s} = \frac{1}{2\pi} 
\int_{-\pi}^{\pi} e^{i\varphi(r-s)}{\rm d}\varphi $, $r,s \in \mathbb{Z}^d$, and wrote $\vec{a}\cdot\vec{b}=\sum_{r=1}^d a_rb_r$ for scalar products. 
The characteristic function of the random walk $ \vec{X}_t$ is
\begin{align}
\label{charact_function}
{\hat P}(\vec{\varphi},t) &= \mathbb{E} e^{-i\vec{\varphi}\cdot \vec{X}_t}  =
\sum_{m=0}^{t} \mathbb{P}[{\mathcal M(t)}=m] [{\hat W}(\vec{\varphi})]^m \notag \\
&= \mathcal{P}_{\mathcal M}({\hat W}(\vec{\varphi}),t) =\mathbb{E} \, {\hat W}(\vec{\varphi})^{\mathcal M(t)} , \hspace{1cm} \varphi_j \in [-\pi, \pi].
\end{align}
%
%
%
Further, ${\hat P}(\vec{\varphi},t)\big|_{\vec{\varphi}=\vec{0}}=1$ gives the spatial normalization of the propagator.
Finally, notice the notation
\beq
\label{transitionPDF-fourier}
{\hat W}(\vec \varphi) = \mathbb{E} e^{-i\vec{\varphi} \cdot \vec{\xi}} = 
\sum_{\vec{x} \in \mathbb{Z}^d} W(\vec{x}) e^{-i\vec{\varphi} \cdot \vec{x}}.
\eeq
Using Wald's identity  we deduce the expected position, denoting with $X_j(t)$ the Cartesian components of
 $\vec{X}_t$, as 
\beq
\label{mean_dis}
\mathbb{E} X_j(t) 
=\mathbb{E}\mathcal M(t) \mathbb{E} \xi_j.
\eeq
From (\ref{charact_function}) the second moment of the position reads
\begin{equation}  
\label{second_moment_spatial}
\ds \mathbb{E} X_j(t)^2 
=  \ds  \mathbb{E}\mathcal M(t)(\mathcal M(t)-1) [\mathbb{E} \xi_j]^2 + \mathbb{E}\mathcal M(t) \mathbb{E} \xi_j^2  = 
\mathbb{E}\mathcal M(t)^2 [\mathbb{E} \xi_j]^2 + \mathbb{E}\mathcal M(t)\mathbb{V}\text{ar}{\xi_j}   
\end{equation}
and therefore,
\beq
\label{Var_X}
\mathbb{V}\text{ar}X_j(t) =\mathbb{V}\text{ar}{\mathcal M(t)} (\mathbb{E} \xi_j)^2 + \mathbb{E}\mathcal M(t) \mathbb{V}\text{ar} {\xi_j}.
\eeq
The GF of the characteristic function (\ref{charact_function}) is
\beq
\label{GF_charfunction}
 {\hat {\overline  P}}(\vec{\varphi},u)  = \sum_{t=0}^{\infty}u^t {\hat P}(\vec{\varphi},t) = \sum_{m=0}^{\infty} {\overline \Phi}_{\cal M}^{(m)}(u)[{\hat W}(\vec{\varphi})]^m 
 \eeq
where the GFs ${\overline \Phi}_{\cal M}^{(m)}(u)$ of the state probabilities $\mathbb{P}[{\mathcal M(t)}=m]$ of the generator process ${\mathcal M(t)}$ come into play.

In the following we consider a graph whose nodes constitutes a subset of $\mathbb Z^d$ and the edges are determined by the distribution of the steps $\vec{\xi}$, thus defining the topology of the graph. 
We assume that all the nodes of the graph have constant degree $\Lambda_d$.

The transition matrix for a step $\vec{x} \to \vec{x}+\vec{y}$, $\vec{x},\vec{y} \in \mathbb{Z}^d$, can then be represented as
\beq
\label{transition_op_representation}
W(\vec{y})  = \mathbb{E} \delta_{\vec{y},\vec{\xi}} = \sum_{r=1}^{\Lambda_d} p_r  \delta_{\vec{y},\vec{a}^{(r)}} 
\eeq
where the walker performs the step $\vec{a}^{(r)}$ with probability $p_r$.
Note that $\delta_{\vec{y},\vec{a}}$ is the matrix describing the deterministic step $\vec{x} \to \vec{x}+\vec{a}$
and (\ref{transition_op_representation}) averages over all possible steps. 
The probabilities $p_r$ can be conceived as the weights of the edges in a weighted
graph and clearly
\begin{align}
\sum_{\vec{y} \in \mathbb{Z}^d} W(\vec{y})  = \sum_{r=1}^{\Lambda_d} p_r=1.
\end{align}
The vectors $\vec{a}^{(r)}$ represent the directed edges of the graph.
This representation allows us to capture a wide range of possibly biased walks on weighted graphs 
(see \cite{RiasMiMedin2020,Riascos-et-al2023,VanMiegem2011} for related models).

In order to focus on the effects induced by the generator process ${\mathcal M(t)}$, we focus on simple walks in which the Fourier transform 
(\ref{transitionPDF-fourier}) takes the form
\beq
\label{fourier_trans}
{\hat W}(\vec{\varphi}) = \sum_{r=1}^{\Lambda_d} p_r e^{-i\vec{\varphi}\cdot \vec{a}^{(r)}}. 
\eeq
For walks on undirected graphs (unbiased walks), all moments of odd order vanish, thus (\ref{fourier_trans}) is real and takes the form  $W(\vec{\varphi}) =  
\sum_{r=1}^{\Lambda_d} p_r \cos{\vec{\varphi}\cdot \vec{a}^{(r)}}$. 
This case includes the well-known unbiased walk on $\mathbb{Z}^d$
with $\Lambda_d = 2d$ and
\beq
\label{unbiased_W}
{\hat W}(\vec{\varphi})=\frac{1}{d}\sum_{r=1}^d\cos{\varphi_r} 
\eeq
(consult for instance \cite{Polya1921} and \cite{grimmett}). 
We study now a few cases of random walks of the form (\ref{random_position}). We use the following classification (cf. Section \ref{classification}):
\begin{defin}
\label{RW_classification}
\end{defin}
\begin{itemize}
\item $\vec{X}_t$ is a type I RW if ${\mathcal M(t)}$ is an AP of type I; 
\item $\vec{X}_t$ is of type II RW if ${\mathcal M(t)}$ is an AP of type II; 
\item $\vec{X}_t$ is an intermediate random walk (IRW) if ${\mathcal M(t)}$ is an IAP. 
\end{itemize}
%
%
%
%
\begin{rmk}
With reference to Remark \ref{quattrouno}, we observe that the corresponding random walks can still be classified according to Definition \ref{RW_classification}. 
\end{rmk}

\subsection{Intermediate random walks: defective Bernoulli stops the random walker}
\label{RW-transient}

The aim of the present section is to analyze an IRW (\ref{random_position}) {with generator process $M(t)$} is the IAP defined by (\ref{conditional-process}).
Here, we assume that 
%
%
the stopping time $\Delta T$ is distributed as the inter-arrival times of a DBP (see section \ref{DBP}). Recall its waiting time PDF is $\psi_S(t)=\mathcal{Q}_Spq^{t-1}$, $t \in \mathbb{N}$ (and $\Lambda_M=1-\mathcal{Q}_S$).
This IRW is a discrete-time version of a Montroll--Weiss walk (that is, a RW time-changed with a type II RP $N_{II}(t)$) which is stopped at the first event of an independent DBP. 
Using (\ref{state_pol_IAP}), the GF (\ref{GF_charfunction}) writes 
\begin{align}
\label{trans_mat_GF_geostop}
\ds {\hat {\overline  P}}(\vec{\varphi},u) & =\ds {\overline \Pi}_{\text{\textit{IAP}}}({\hat W}(\vec{\varphi}),u) \notag \\
& =  \ds \Lambda_M \frac{1-{\overline \psi}_{II}(u)}{(1-u)[1-{\hat W}(\vec{\varphi}){\overline \psi}_{II}(u)]} +
\frac{\mathcal{Q}_S}{q(1-u)}\left(\frac{1-{\overline \psi}_{II}(qu)}{1- {\hat W}(\vec{\varphi}){\overline \psi}_{II}(qu)} 
- p \right).           
\end{align}
For $\Lambda_M=1$ we have the discrete-time counterpart of the Montroll--Weiss walk.
If $\mathcal{Q}_S=1$
it is a type I RW with geometric absorbing time.
Note that ${\hat {\overline  P}}(\vec{\varphi},u)|_{\vec{\varphi}=\vec{0}} = (1-u)^{-1}$ and 
${\hat {\overline  P}}(\vec{\varphi},u)|_{u=0}=1$
are consistent with the normalization of the propagator and the initial condition $\vec{X}_0=\vec{0}$ a.s., respectively.

\subsubsection{Case: ${N_{II}(t)}$ is a Bernoulli process}
\label{walk-NII-Ber}
Let $N_{II}(t)$ be a Bernoulli process with $\psi_{II}(t)$ with parameter $p_0$.
Then, by using (\ref{GF_Dber_Ber_statepoly}), the characteristic function \eqref{charact_function} takes the explicit form
\beq
\label{propa_charact}
{\hat P}(\vec{\varphi},t) = \Pi_{IAP}({\hat W}(\vec{\varphi}),t) =
\Lambda_M {\cal A}^t + \frac{\mathcal{Q}_S}{1-q{\cal A}}\left(p{\cal A}+(1-{\cal A})q^t{\cal A}^t\right) , \hspace{1cm} {\cal A}(\vec{\varphi}) =q_0+p_0{\hat W}(\vec{\varphi}).
\eeq
In order to explore the diffusive features of the RW, we investigate the first two moments of the position $\vec{X}_t$ of the walker. Recalling (\ref{IAM_1st_moment}), the expected value of the $j$-th component of the position of the walker is 
\beq
\label{first-moment-in-time}
\ds \mathbb{E} X_j(t) =  \mathbb{E} M(t) \, \mathbb{E} \xi_j = 
 \left[\Lambda_M p_0t+\frac{\mathcal{Q}_Sp_0}{p}(1-q^t) \right]\,  \mathbb{E} \xi_j.   
\eeq
For large values of $t$, the drift of this walk is dominated by the linear contribution of the Bernoulli process $N_{II}(t)$.
From (\ref{second_moment_spatial}), together with (\ref{IAM_1st_moment}) and (\ref{M_second}), the second moment of the $j$-th component of the position is

\begin{align}
\label{MST-in-time}
\ds \mathbb{E} X_j^2(t) = {} & \ds 
\mathbb{E} M^2(t) (\mathbb{E}\xi_j)^2 + \mathbb{E}M(t) \mathbb{V}\text{ar} \, {\xi_j} \notag  \\ 
= {} &  \ds \left(\Lambda_M(p_0^2t^2 +p_0q_0t) + \mathcal{Q}_S \biggl[\frac{2p_0^2q}{p^2}(1-q^t -ptq^{t-1}) + \frac{p_0}{p}(1-q^t)\biggr] \right) (\mathbb{E}\xi_j)^2 \notag \\  &  + \ds  \left(\Lambda_M p_0t+\frac{\mathcal{Q}_Sp_0}{p}(1-q^t)\right) \mathbb{V}\text{ar} \, {\xi_j}. 
\end{align}

(i) \textbf{Unbiased walk.}
We observe the following behavior. If the walk is unbiased (i.e. if $\mathbb{E}\xi_j=0$ $\forall j=1,\ldots, d$), the mean square displacement (MSD) and the variance grow linearly for large times:
\beq
\label{unbiased}
\mathbb{E} \sum_{j=1}^d X_j^2(t) \sim K \, t ,\qquad  K = \Lambda_M p_0  \sum_{j=1}^d \mathbb{E} \xi_j^2, \qquad t \to \infty,
\eeq
corresponding to a  diffusive behavior with  diffusion coefficient $K$ determined by the unbounded sample paths of $N_{II}(t)$. This implies that the asymptotics \eqref{unbiased} does not depend on $p$. However, the  diffusion coefficient $K$ decreases as $\mathcal{Q}_S$ approaches one, and is  maximal if $\mathcal{Q}_S=0$.
\begin{rmk}
    In the simple  unbiased walk on $\mathbb{Z}^d$ with next neighbor steps
    along the  edges of the d-dimensional
hypercubic lattice \cite{grimmett},
    occurring with equal probability $1/(2d)$ we have
$\mathbb{E} \xi_j  = 0$ and $\mathbb{E} \xi_j^2 = 1/d $ ($j=1,\ldots, d$).
The diffusion coefficient defined in (\ref{unbiased}) then is $K=\Lambda_M p_0$.
Note that different $\Delta T$ and $N_{II}(t)$, and hence different values of $\Lambda_M$ and $p_0$, are able to reproduce the same diffusion coefficient $K$.
\end{rmk}
(ii) \textbf{Biased walk.}
The walk is biased if $\mathbb{E} \xi_j \neq 0$ for at least one component $j$. From \eqref{MST-in-time} we observe a ballistic superdiffusive behavior
\beq
\label{ballisstic}
\mathbb{E} X_{j}^2(t)\sim  \Lambda_M p_0^2 (\mathbb{E} \xi_{j})^2 \, t^2  , \qquad t \to \infty.
\eeq
The dominating term of the asymptotic variance of the position is quadratic and comes from the variance of the generator $M(t)$. 

We notice that the behaviors of cases (i) and (ii) are well-known in the context of continuous-time random walks \cite{Shlesinger1974,ScherMontroll1975}.
The above theory is valid for every Bravais lattice \cite{martin_perk2019} of which $\mathbb{Z}^d$ is the most prominent example. A further example is the triangular lattice, which we discuss in the two-dimensional case in the next section.
\subsubsection{IRWs on the triangular lattice}
We consider biased and unbiased versions of the IRW (\ref{random_position}) on an infinite triangular lattice in the case in which the generator process 
${M(t)}$ is the IAP defined by (\ref{conditional-process}).
 
In the biased RW we allow only four steps towards neighbor lattice points
$\vec{a}^{(r)} = (\cos\frac{r\pi}{3}, \sin \frac{r\pi}{3})$, $r \in\{0,1,2,3\}$, each with equal probability $p_r=1/4$, see Fig. \ref{biased_trinagular}(a).
In the unbiased RW steps to all the $6$ closest neighbor lattice points $\vec{a}^{(r)}$, $r \in\{0,\ldots,5\}$, occur with equal probability $p_r=1/6$, see Fig. \ref{biased_trinagular}(b).
\begin{figure}
\centerline{\includegraphics[width=0.7\textwidth]{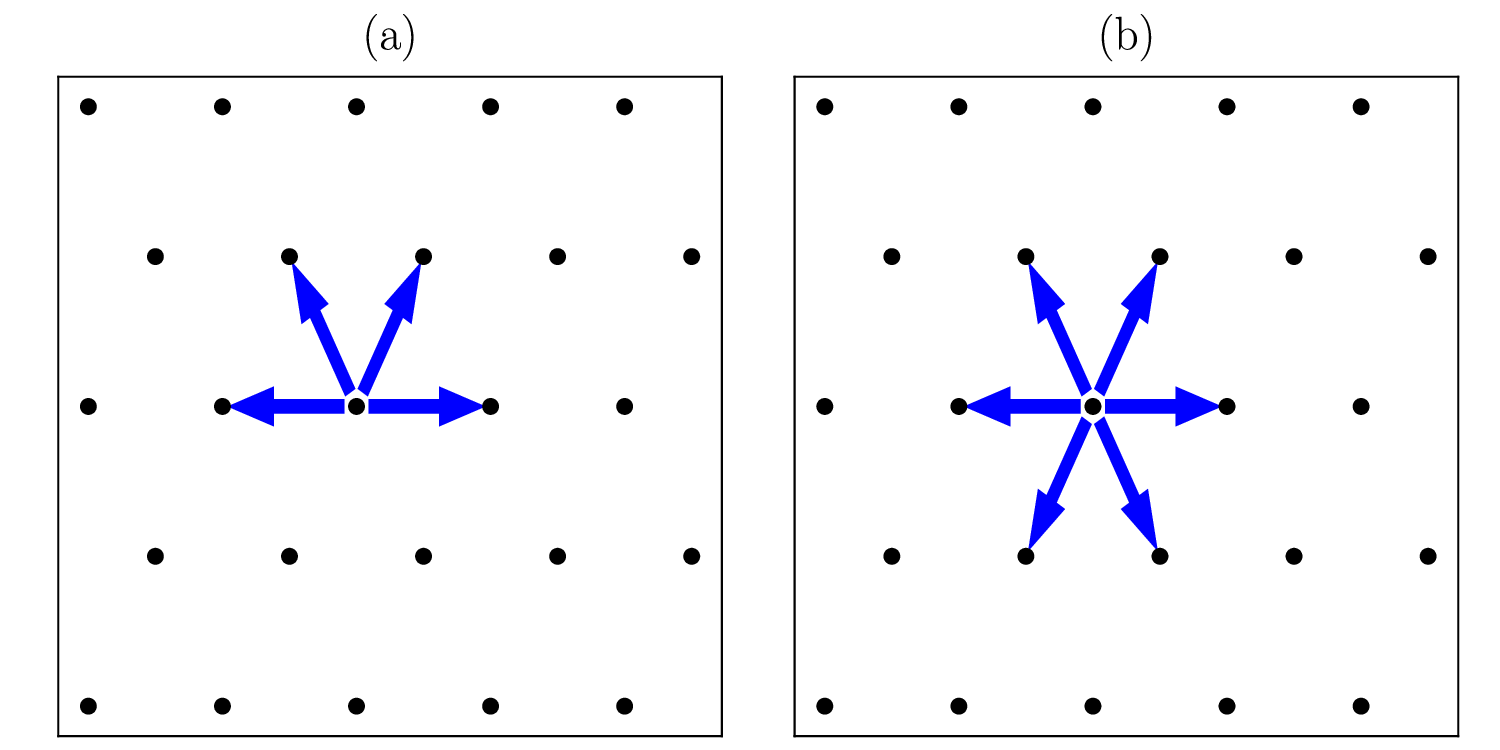} 
}
\caption{ Blue arrows represent the allowed and equally probable steps in a biased walk (panel (a)) and in an unbiased walk (panel (b)) on the triangular lattice with lattice constant set equal to unity.}
\label{biased_trinagular}
\end{figure}
(i) \textbf{Unbiased case.}
We have that  $\mathbb{E} \xi_j=0$ and $\mathbb{E}(\xi_1^2+\xi_2^2) = 1$  thus the MSD reads
\beq
\label{MSD_unbiased-triangular}
\sum_{j=1}^2 \mathbb{E} X_j^2(t) = \mathbb{E} M(t). 
\eeq
In the special case of a DBP stopped by a  Bernoulli r.v., for large values of $t$ a diffusion behavior emerges. Namely,
$\sum_{j=1}^2 \mathbb{E} X_j^2(t) \sim \Lambda_M p_0 t$, (see (\ref{IAM_1st_moment}) and Fig.~\ref{Expected_M})). 

\medskip

(ii) \textbf{Biased case.}
We have  $\mathbb{E} \xi_1 = 0$, $\mathbb{E} \xi_2 = \frac{\sqrt{3}}{4}$,  $\mathbb{E}(\xi_1^2 +\xi_2^2) = 1$ 
 giving the MSD:
\beq
\label{MSD_biased-triangular}
\sum_{j=1}^2 \mathbb{E} X_j^2(t) = \frac{3}{16} \mathbb{E} M^2(t) +\frac{13}{16} \mathbb{E} M(t).
\eeq
Again in the DBP stopped by a  Bernoulli r.v. we observe a ballistic law that is 
\begin{equation}    
\sum_{j=1}^2 \mathbb{E} X_j^2(t)  \sim  \frac{3}{16} \Lambda_M p_0^2t^2, \qquad t \to \infty. 
\end{equation}
Of some interest is also the expected position of the walker at time $t$. Since $\mathbb{E} \xi_1=0$ (and thus $\mathbb{E}X_1(t) =0$) only the second component is non-zero. In particular,
\beq
\label{X2-direction}
\mathbb{E}X_2(t) = \frac{\sqrt{3}}{4} \mathbb{E} M(t)\sim \frac{\sqrt{3}}{4} \Lambda_Mp_0 t, \qquad t\to \infty, 
\eeq
showing a linear drift.
We plot the exact expression of the MSD (\ref{MSD_biased-triangular}) in Fig. \ref{MSD_biased_fig} for the same values of $\mathcal{Q}_S$
as in Figs.~\ref{Expected_M},~\ref{Var_plot}. 
In the type I limit of the walk ($\mathcal{Q}_S=1$) the MSD has a finite asymptotic value (blue curve). One can clearly see that
the smaller $\mathcal{Q}_S$ the stronger the MSD grows. The black line corresponds to the  type II limit of the walk. 
\begin{figure}
\centerline{\includegraphics[width=0.7\textwidth]{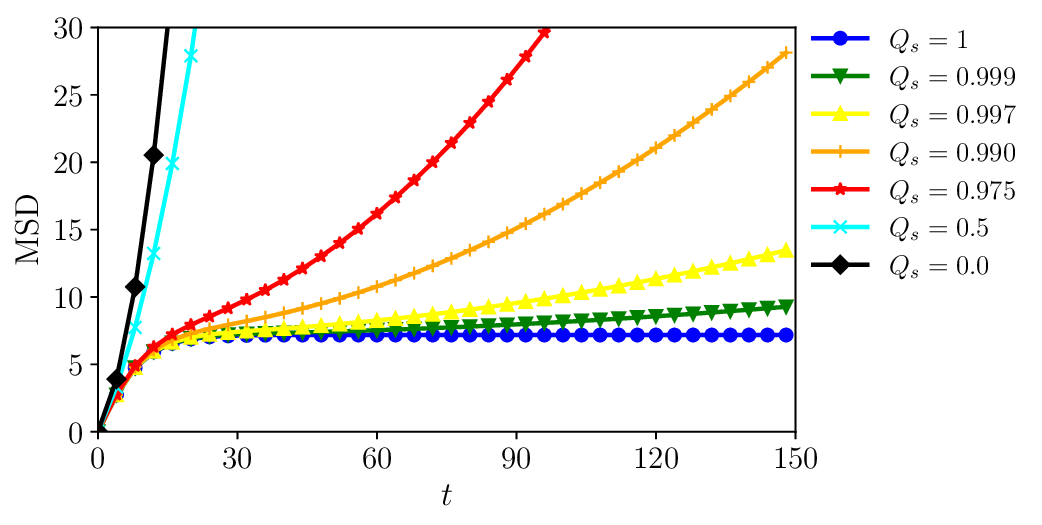}}
\vspace{-4mm}
\caption{MSD of the RW in the biased case from (\ref{MSD_biased-triangular}). The  parameters are chosen as in Fig.~\ref{Var_plot}.}
\label{MSD_biased_fig}
\end{figure}
\section*{Acknowledgments}
 G.D. and F.P. have been partially supported by the MIUR-PRIN 2022 project \lq\lq Non-Markovian dynamics and non-local equations\rq \rq, no. 202277N5H9 and by the GNAMPA group of INdAM. We thank two anonymous reviewers for their valuable comments helping us to 
 substantially improve the presentation.

\hfill

\section*{APPENDIX}

\begin{appendix}
\section{Further results on random walks: type I case}

\label{type_I_limit}
Let us investigate now the type I limit of $M(t)$, that is the stopping time $\Delta T$ is non-defective
For simplicity we consider the case in which $\Delta T$ is geometrically distributed
%
%
%
%
and $N_{II}(t)$ is an arbitrary type II RP as considered in Section \ref{geometric_stop}.
This case generates a RW of type I. To this end we evaluate the characteristic function (\ref{charact_function}) for $t\to \infty$ 
(see (\ref{GF_Pm}): 
\beq
\label{charfunction_geostop}
{\hat P}(\vec{\varphi},\infty) = \Pi({\hat W}(\vec{\varphi}),\infty) =\lim_{u\to 1-}(1-u) {\hat {\overline  P}}(\vec{\varphi},u)  = - \frac{p}{q}+ \frac{1-{\overline \psi}_{II}(q)}{q[1- {\hat W}(\vec{\varphi}) \, {\overline \psi}_{II}(q)]} .
\eeq
The quantity
\beq\label{DMW}
{\hat {\overline P}}_q(\vec{\varphi},u) := \frac{1-{\overline \psi}_{II}(qu)}{(1-u)[1-{\hat W}(\vec{\varphi}){\overline \psi}_{II}(qu)]} 
\eeq
refers to a Montroll--Weiss-type walk (\ref{random_position}) with as a generator the renewal process ${\cal R}_q(t)$ with defective waiting time density $ \psi_q(t) = \psi_{II}(t) q^t$ (see
the end of Section \ref{geometric_stop}). 
Then, the propagator (\ref{trans_mat_GF_geostop}) (with $\Lambda_M=0$) has the representation
\beq
\label{space-time}
P(\vec{x},t)= \frac{1}{q} P_q(\vec{x},t) -\frac{p}{q} \delta_{\vec{x},\vec{0}}
\eeq
with the initial condition $P(\vec{x},t)|_{t=0} = P_{q}(\vec{x},t)|_{t=0} = \delta_{\vec{x},\vec{0}}$. 

\noindent Note that for $p \in (0,1)$ the type I nature of the walk implies the existence of the stationary PDF, which reads
\beq
\label{space-inftime}
P(\vec{x},\infty) = \frac{1}{q} P_q(\vec{x},\infty) -\frac{p}{q} \delta_{\vec{x},\vec{0}} ,  \hspace{1cm} \vec{x} \in \mathbb{Z}^d,
\eeq
where 
\beq
\label{limiting}
 P_q(\vec{x},\infty) = \frac{1-{\overline \psi}_{II}(q)}{(2\pi)^d} \int_{-\pi}^{\pi} \dots \int_{-\pi}^{\pi} \frac{e^{i\vec{\varphi} \cdot \vec{x}}}{[1-{\hat W}(\vec{\varphi}){\overline \psi}_{II}(q)]}  {\rm d}\varphi_1\ldots{\rm d}\varphi_d ,  \hspace{1cm} \vec{x} \in \mathbb{Z}^d.
 \eeq 
This limit propagator describes a `non-equilibrium steady state' (NESS) as known in the context of stochastic resetting, see \cite{Barkai-etal-2023,Eule-et-al2016}.

\subsection{Continuous-space scaling limit to a universal NESS}
\label{stationary}
Here, we consider a scaling limit for $p \to 0+$ where the infinite time propagator 
(\ref{space-inftime}) boils down to (\ref{limiting}).
To do that, we introduce the scaling parameter (see (\ref{limit_M_Ber}))
\beq
\label{limit_param}
 \lambda \sim \mathbb{E} M(\infty) = \frac{{\overline \psi}_{II}(q)}{q[1-{\overline \psi}_{II}(q)]} \sim \frac{1}{1-{\overline \psi}_{II}(q)} \sim
 \frac{1}{C_{\mu}p^{\mu}}  \to \infty  ,\qquad p\to 0+,
\eeq
where we have used the asymptotic expansion ${\overline \psi}_{II}(1-p) \sim 1- C_{\mu} p^{\mu} +o(p^{\mu})$, $C_{\mu} >0$, $\mu \in (0,1]$ (see also \eqref{two-cases_psi_II}). Clearly, (\ref{space-inftime}) and (\ref{limiting}) converge to the same limiting propagator which we denote by $P_c(\vec{x},\infty)$. The latter is
related to the asymptotic behavior of the type II  auxiliary renewal process $\mathcal{R}_q(t)$ whose interarrival time PDF becomes non-defective for $q\to 1-$ (see (\ref{typeI_renewal_state}--\ref{renewal_eq_R}).
\\
We will see in the following that the emerging NESS is universal, in the sense that it is the same for any type II renewal process $N_{II}(t)$: it depends only on the 
probabilistic properties of the steps captured by ${\hat W}(\vec{\varphi})$.

In the following sections, we derive the NESS propagator for special classes of walks.
\paragraph{Light-tailed step distribution.}
Here we investigate the class of walks  such that the first and second step moments $A_j = \mathbb{E} \xi_j$ and $B_j =\mathbb{E} \xi_j^2$ are finite. For this class we have the expansion
\beq
\label{Wphi}
{\hat W}(\vec{\varphi}) = 1 -i\sum_{j=1}^d A_j\varphi_j -\frac{1}{2} \sum_{j=1}^d\varphi_j^2B_j + \ldots, \qquad |\vec{\varphi}| \to 0.
\eeq
In particular, we focus on the two cases of  biased and unbiased walks.
\bigskip

(i) \textbf{Biased case ($A_j \neq 0$ for at least one $j$).}

\noindent The Fourier transform of (\ref{limiting}) then writes 
\beq
\label{axiliary_repre}
\frac{1}{1-\frac{{\overline \psi}_{II}(q)}{1-{\overline \psi}_{II}(q)}[{\hat W}(\vec{\varphi}) -1]} = \frac{1}{1+\lambda[1-\hat W(\vec{\varphi})]}
= \int_0^{\infty} e^{-\tau[1+\lambda[1-\hat W(\vec{\varphi})]} {\rm d}\tau \overset{\vec{\varphi} \to \vec0}{\longrightarrow} \int_0^{\infty} e^{-\tau(1+i\vec{k}\cdot{\vec{A}})}{\rm d}\tau
\eeq
where $k_j = \lambda \varphi_j \in [-\pi\lambda, \pi\lambda]$ and we denote the  components of the rescaled position vector by $y_j = x_j/\lambda$,\, (that is $\vec{Y} \in \lambda^{-d} \mathbb{Z}^d \to \mathbb{R}^d$ as $\lambda \to \infty$, and $\lambda^{-1}$ is the rescaled lattice constant). In 
(\ref{axiliary_repre}) we use 
\begin{equation}
\lambda\left[1-{\hat W}\left(\frac{\vec{k}}{\lambda}\right)\right] = i\sum_j^dA_jk_j+ \frac{1}{2\lambda}\sum_j^d B_j k_j^2 + o(\lambda^{-1}) \ \overset{\lambda \to \infty}{\longrightarrow} \ 
i\sum_j^dA_jk_j = i\vec{k}\cdot \vec{A}.
\end{equation}
Note that $ {\hat W}(\vec{k}/\lambda) = \sum_{r=1}^{\Lambda_d}p_r e^{-i\vec{k}\cdot \vec{a}_r/\lambda} =\mathbb{E} e^{-i\vec{k}\cdot\vec{a}'}$ contains the rescaled steps $\vec{a}_r'=\vec{a}_r/\lambda$.
The continuous space limiting propagator yields
\beq
\label{limit-large-lamda}
\begin{array}{clr}
 \ds P_c(\vec{y},\infty) & = \ds  \lim_{\lambda  \to \infty} \lambda^d P_q(\vec{x},\infty) = \ds  \lim_{\lambda  \to \infty} \frac{1}{(2\pi)^d}\int_{-\lambda \pi}^{\lambda \pi} \ldots \int_{-\lambda \pi}^{\lambda \pi} \frac{e^{i\vec{k}/\lambda \cdot \vec{x}}}{1+\lambda(1-{\hat W}(\vec{k}/\lambda))} {\rm d}k_1\ldots{\rm d}k_d  &  \\ \\ 
 & = \ds \int_0^{\infty}{\rm d}\tau \,  e^{-\tau} \frac{1}{(2\pi)^d}\int_{-\infty}^{\infty}{\rm d}k_1\ldots 
 \int_{-\infty}^{\infty}{\rm d}k_d e^{i\vec{k}\cdot(\vec{y}-\tau\vec{A})} & \\ \\ & = \ds   
 \int_0^{\infty}{\rm d}\tau \,  e^{-\tau} \delta^d(\vec{y}-\tau\vec{A}). &
\end{array}
\eeq
For $d=1$ we can evaluate this case explicitly, obtaining
\beq
\label{fin-result}
P_{c}(y,\infty) \sim \frac{1}{|A|} \Theta\left(\frac{y}{A}\right) e^{-\frac{y}{A}}  
\eeq
which is a one-sided exponential PDF
and is non-null only in the sense on the direction of the bias, i.e. for $\text{sgn}(y)=\text{sgn}(A)$. 
We plot (\ref{fin-result}) for several values of $A$ in Fig. \ref{limitdis-biased}. Note that $A= \mathbb{E}X_c(\infty)$ is the expected final
position of the walk. Further, as $A$ increases, the distribution (\ref{fin-result}) becomes more spread-out.
\begin{figure}
\centerline{\includegraphics[width=0.7\textwidth]{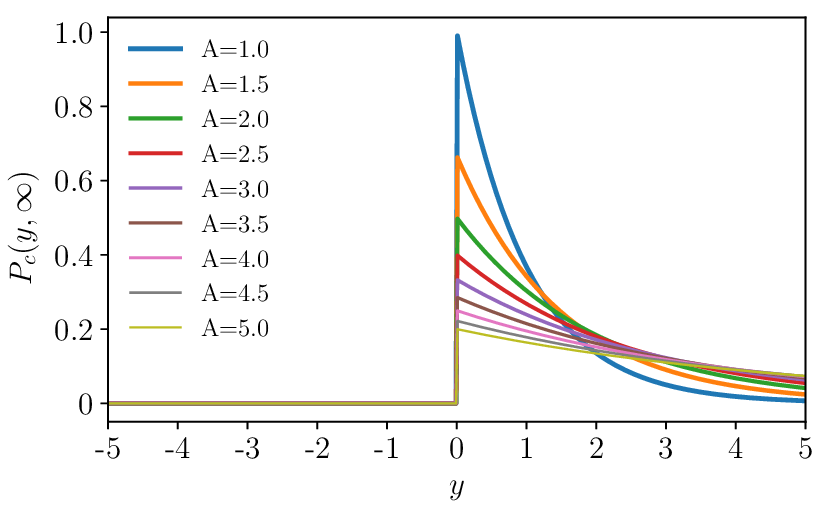}}
\vspace{-3mm}
\caption{ Limiting propagator $P_c(y,\infty)$  of the biased walk from (\ref{fin-result}) for different values of $A$.}
\label{limitdis-biased}
\end{figure}

\bigskip

(ii) \textbf{Unbiased case ($A_j=0$ for  $j=1,\ldots, d$, and $B_j > 0$ for at least one $j$).}

\noindent Differently from the biased case, we rescale the spatial coordinates as $y_j= x_j \lambda^{-1/2}$ with
$\vec{Y} \in \lambda^{-d/2} \mathbb{Z}^d \to \mathbb{R}^d$, as $\lambda \to \infty$, and $k_j=\sqrt{\lambda}\varphi_j \in [-\pi\sqrt{\lambda}, \pi \sqrt{\lambda}]$.
The continuous space limit propagator, considering that $\lambda(1-\hat{W}(\vec{k}/\sqrt{\lambda})) \to  \sum_{j=1}^d\frac{B_j}{2} k_j^2$, reads
\begin{eqnarray} 
\label{unbiased_limit}
 \ds P_{c}(\vec{y},\infty) &=& \ds \lim_{\lambda \to \infty} \lambda^{\frac{d}{2}} P_q(\vec{x},\infty) \notag \\
 & = & \ds \frac{1}{(2\pi)^d}  \int_0^{\infty} {\rm d}\tau \, e^{-\tau} 
\int_{-\infty}^{\infty}{\rm d}k_1\ldots \int_{-\infty}^{\infty}{\rm d}k_d \, e^{i\vec{k}\cdot\vec{y} -\frac{\tau}{2}\sum_{j=1}^d B_j k_j^2}.
\end{eqnarray}
Note that each of the above integrals take a 
Gaussian form:
\beq
\label{gaussin_int}
\ds \frac{1}{2\pi}\int_{-\infty}^{\infty} e^{iky}
 e^{-\frac{\tau B k^2}{2}}{\rm d k} = \ds \frac{e^{\frac{-y^2}{2B\tau}}}{\sqrt{2\pi B\tau}} .
\eeq
Therefore,
\beq
\label{well_scales_propa}
 P_{c}(\vec{y},\infty)  = \ds  \int_0^{\infty} 
\frac{  e^{ -\frac{1}{2\tau}\sum_{j=1}^d \frac{y_j^2}{B_j} } }{ \sqrt{(2\pi\tau)^d B}} \, e^{-\tau} \, {\rm d}\tau , \hspace{1cm} B = \prod_{j=1}^d B_j ,
\eeq
i.e. the unbiased limiting propagator turns out to be a superposition of symmetric Gaussians where $\mathbb{E} Y_j^2 =B_j$ is retained.
For $d=1$ we can evaluate this case explicitly: 
\beq
\label{fin_evaluation}
P_{c}(y,\infty) =\frac{1}{2\pi} \int_{-\infty}^{\infty} \frac{e^{iky}}{\frac{B}{2}k^2+1}{\rm d}k = 
\frac{1}{2\pi} \int_{-\infty}^{\infty} \frac{e^{iky}}{\frac{B}{2}(k-\kappa_1)(k-\kappa_2)}{\rm d}k
\eeq
with complex conjugate zeros $\kappa_{1,2}=\pm i\sqrt{\frac{2}{B}}$. Closing the integration path for $y>0$ by an infinite half circle in the upper and for $y<0$ in the lower complex plane and applying the residue theorem yields
\beq
\label{fin_evaluation_residua}
P_{c}(y,\infty) = \Theta(y) \frac{2i}{B} \frac{e^{i\kappa_1y}}{\kappa_1-\kappa_2} -\Theta(-y)\frac{2i}{B} 
\frac{e^{i\kappa_2y}}{\kappa_2-\kappa_1} = \frac{e^{-|y|\sqrt{\frac{2}{B}}}}{\sqrt{2B}}
\eeq
which is the PDF of a Laplace random variable with location parameter zero and scale parameter $\sqrt{B/2}$. Note that $B$ is the variance of the random walk with limiting propagator $P_c(y, \infty)$. 
In Fig.~\ref{limitdis-unbiased} we plot the propagator $P_c(y, \infty)$ for several values of $B$. 
\begin{figure}
\centerline{\includegraphics[width=0.7\textwidth]{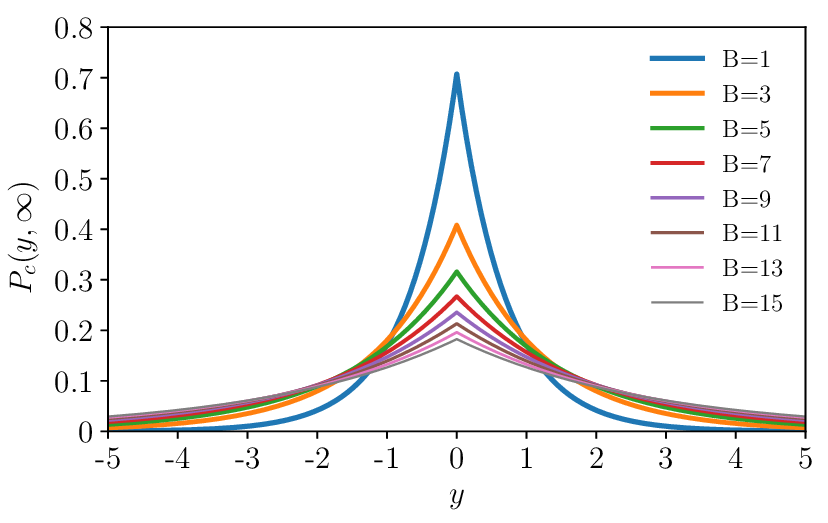}}
\vspace{-3mm}
\caption{Limiting propagator $P_c(y,\infty)$ from (\ref{fin_evaluation_residua}) for the unbiased case for different values of $B$.}
\label{limitdis-unbiased}
\end{figure}
\paragraph{Power-law distributed steps.}
We briefly explore a few cases of walks with power-law distributed steps with infinite mean or variance.
We discriminate between biased and unbiased walks in $d=1$.

\bigskip

{\it (a) Sibuya walk}

\medskip
\noindent Consider a strictly increasing walk with Sibuya distributed steps on $\mathbb{N}$ (\lq\lq Sibuya walk\rq\rq, see e.g.\ \cite{MiPoRia_fractal_fract2020}, Section 4). 
We have then, instead of (\ref{Wphi}), the characteristic function of the steps
\beq
\label{Sibuya_steps}
{\hat W}_{\alpha}(\varphi) = 1-[1-e^{-i\varphi}]^{\alpha} \!\!, \qquad \alpha \in (0,1).
\eeq
We introduce the scaling
$k=\lambda^{\frac{1}{\alpha}}\varphi$. The rescaled walk $Y= \lambda^{-\frac{1}{\alpha}} X \in \lambda^{-\frac{1}{\alpha}}\mathbb{N}$ exhibits the essential feature,
\begin{align}
\lambda[1- {\hat W}_{\alpha}(\lambda^{-\frac{1}{\alpha}} k )] \to (ik)^{\alpha} \! \!, \qquad \lambda \to \infty.
\end{align} 
It turns out that the limiting propagator is
\beq
\label{limit_biased_levy}
\ds P_c(y,\infty)  = \ds \lim_{\lambda \to \infty}\lambda^{\frac{1}{\alpha}}P_q(x,\infty) =
\int_0^{\infty}{\rm d}\tau e^{-\tau} \frac{1}{2\pi}\int_{-\infty}^{\infty}e^{iky} e^{-\tau(ik)^{\alpha}}{\rm d}k. 
\eeq
In formula \eqref{limit_biased_levy}, the inner integral corresponds to the
one-sided stable probability density function ${\cal L}_{\alpha}(y)= 
\frac{1}{2\pi}\int_{-\infty}^{\infty} e^{iky} e^{-(ik)^{\alpha}}{\rm d}k$, $\alpha \in (0,1)$. 
This PDF is causal and has a $y^{-\alpha-1}$ fat-tailed decay for $y\to +\infty$, which entails an infinite mean.
Consult \cite{MiRiaFract-calc2020,WangBarkai2024} for related models of random walks with bias.

\bigskip

{\it (b) One-sided power-law steps with finite mean}

\medskip

\noindent If the strictly increasing walk has finite mean and infinite variance, then the characteristic function of the steps takes the form \cite{GSD-Squirrel-walk2023} 
\beq
\label{biased_diverging_var}
{\hat W}(\varphi) = 1 - A i \varphi + B_{\mu}(i\varphi)^{\mu} +o(|\varphi|^{\mu}), 
\qquad \mu \in (1,2)
\eeq
with $A,B_{\mu} >0$. Due to the lowest order $i\varphi$ in this expansion, the limiting distribution coincides with (\ref{fin-result}).
\paragraph{A unified description of the one-dimensional case: L\'evy steps.}

\medskip

\noindent Now we consider the canonical form of a stable density $\mathcal{L}_\alpha(y)$:
\beq
\label{L_alpha_kanonic}
 {\cal L}_{\alpha}(y) =\frac{1}{2\pi} \int_{-\infty}^{\infty} e^{iky} e^{ - (i^{\theta} k)^{\alpha}} {\rm d}k   , \hspace{1cm} \alpha \in (0,2],
\eeq
containing a further real parameter $\theta$, 
and where 
$ (i^{\theta} k)^{\alpha} = |k|^{\alpha} e^{\frac{\pi i}{2} \text{sgn}(k) \alpha\theta} $. For large $|y|$  and $\alpha \in (0,2)$,  ${\cal L}_{\alpha}(y)$
scales as $|y|^{-\alpha-1}$.
We consider here a random walk whose characteristic function is
\beq
\label{Levy_steps}
{\hat W}_\alpha(\varphi) = 1 -  (i^{\theta} \varphi)^{\alpha} +o(|\varphi|^{\alpha})
\eeq
with  index $\alpha \in (0,2]$. One can show that the previously discussed cases 
are contained in (\ref{Levy_steps}). Now, by the re scaling $k=\lambda^{\frac{1}{\alpha}}\varphi$ 
we obtain 
the Fourier transform of the NESS propagator ${\hat P}_c(k,\infty)=(1+ (i^{\theta} k)^{\alpha})^{-1} $
and hence
\beq
\label{NESS_LEVY}
P_c(y,\infty) = \int_0^{\infty}{\rm d}\tau \, e^{-\tau}  \frac{1}{2\pi} \int_{-\infty}^{\infty}e^{iky} 
e^{-\tau(i^{\theta} k)^{\alpha}}  {\rm d}k = 
\int_0^{\infty}{\rm d}\tau \, e^{-\tau} \tau^{-\frac{1}{\alpha}} \frac{1}{2\pi} \int_{-\infty}^{\infty}e^{i\kappa X/\tau^{\frac{1}{\alpha}}} 
e^{-(i^{\theta} \kappa)^{\alpha}}  {\rm d}\kappa.
\eeq
Formula (\ref{Levy_steps}) contains as a special case the class of symmetric L\'evy flights for which the steps are such that ${\hat W}_{\alpha}(\varphi) = 1 - |\varphi|^{\alpha} +o(|\varphi|^{\alpha})$, that is symmetric $\alpha$-stable random variables with PDF ${\cal L}_{\alpha}(y) = \frac{1}{2\pi}\int_{-\infty}^{\infty} e^{iky} e^{-|k|^{\alpha}}{\rm d}k$.
Clearly, our discussion on the case of  asymptotically power-law distributed steps is not exhaustive.
We refer to \cite{Levy_flights_Chechkin2008,MetzlerKlafter2000} (and the references therein) 
for a detailed analysis of the dynamics of $\alpha$-stable propagators.

The previous construction yields the following general expression of the NESS propagator. 

\begin{Proposition}
    For $d=1$  the NESS propagator \eqref{limiting} for $p\to 0^+$  takes the following form 
    \beq
\label{NESS}
P_c(y,\infty) = \int_0^{\infty}e^{-\tau} \tau^{-\frac{1}{\alpha}} {\cal L}_{\alpha}(y \tau^{-\frac{1}{\alpha}}){\rm d}\tau  , \hspace{1cm} y \in \mathbb{R},
\eeq
written in terms of ${\cal L}_{\alpha}(y)$ that denotes a specific  alpha-stable PDF which  depends on the random walk \eqref{random_position}.
\end{Proposition}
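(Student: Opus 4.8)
The plan is to work entirely in Fourier space and to recognize the claimed formula as the assembly of the computations already carried out in the ``L\'evy steps'' discussion above, specialized to $d=1$. First I would specialize the limiting propagator \eqref{limiting} to one dimension and rewrite its Fourier kernel by means of the algebraic identity
\[
\frac{1-{\bar \psi}_{II}(q)}{1-{\hat W}(\varphi){\bar \psi}_{II}(q)} = \frac{1}{1+\frac{{\bar \psi}_{II}(q)}{1-{\bar \psi}_{II}(q)}\,[1-{\hat W}(\varphi)]},
\]
so that, with the scaling parameter $\lambda$ of \eqref{limit_param} (recall $\lambda\sim[1-{\bar \psi}_{II}(q)]^{-1}$ as $p\to 0+$), the kernel becomes $\{1+\lambda[1-{\hat W}(\varphi)]\}^{-1}$, exactly as in \eqref{axiliary_repre}. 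I would then introduce the rescaling $k=\lambda^{1/\alpha}\varphi$, $y=x\,\lambda^{-1/\alpha}$ and pass to the rescaled density $P_c(y,\infty)=\lim_{\lambda\to\infty}\lambda^{1/\alpha}P_q(x,\infty)$, which absorbs the Jacobian and the rescaling of the lattice constant.

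The core step uses the L\'evy-type expansion \eqref{Levy_steps}, which yields the pointwise limit $\lambda[1-{\hat W}_\alpha(\lambda^{-1/\alpha}k)]\to (i^\theta k)^\alpha$ as $\lambda\to\infty$, while the integration range $[-\pi\lambda^{1/\alpha},\pi\lambda^{1/\alpha}]$ opens up to the whole real line. Hence the Fourier transform of the propagator converges to ${\hat P}_c(k,\infty)=[1+(i^\theta k)^\alpha]^{-1}$. I would then insert the Laplace representation $[1+(i^\theta k)^\alpha]^{-1}=\int_0^\infty e^{-\tau}e^{-\tau(i^\theta k)^\alpha}\,{\rm d}\tau$, interchange the $\tau$- and $k$-integrations, and invert the inner Fourier integral; the substitution $\kappa=\tau^{1/\alpha}k$ turns it into $\tau^{-1/\alpha}{\cal L}_\alpha(y\tau^{-1/\alpha})$ by the canonical definition \eqref{L_alpha_kanonic} (since $\tau(i^\theta k)^\alpha=(i^\theta\kappa)^\alpha$ and $ky=\kappa y\tau^{-1/\alpha}$). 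Collecting the terms reproduces \eqref{NESS_LEVY} and therefore \eqref{NESS}.

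The main obstacle is the rigorous justification of the two limit interchanges. For the passage $\lambda\to\infty$ under the integral sign one needs an integrable dominating function: since ${\hat W}$ is a characteristic function one has $\mathrm{Re}[1-{\hat W}]\ge 0$, whence $|1+\lambda[1-{\hat W}]|\ge 1$, but this alone does not control the integrand as the range grows, and for $\alpha\in(0,1)$ the limiting position-space integrand is only conditionally integrable. It is therefore cleaner to argue at the level of characteristic functions, establishing the pointwise convergence ${\hat P}_c(k,\infty)=[1+(i^\theta k)^\alpha]^{-1}$ and invoking the continuity theorem, rather than manipulating the position-space integral directly. The Laplace representation in turn requires $\mathrm{Re}(i^\theta k)^\alpha=|k|^\alpha\cos(\tfrac{\pi}{2}\alpha\theta)\ge 0$, i.e.\ the stable-law admissibility condition $|\alpha\theta|\le 1$, which simultaneously guarantees convergence of the $\tau$-integral and that $[1+(i^\theta k)^\alpha]^{-1}$ is a genuine characteristic function (of an exponentially subordinated $\alpha$-stable law); under this condition Fubini applies and the identification with ${\cal L}_\alpha$ is legitimate.
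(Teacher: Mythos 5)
Your proposal is correct and follows essentially the same route as the paper: rewriting the Fourier kernel of \eqref{limiting} as $\{1+\lambda[1-\hat{W}(\varphi)]\}^{-1}$ (the paper's \eqref{axiliary_repre}), rescaling $k=\lambda^{1/\alpha}\varphi$, using the L\'evy expansion \eqref{Levy_steps} to obtain $\hat{P}_c(k,\infty)=[1+(i^{\theta}k)^{\alpha}]^{-1}$, and then applying the exponential integral representation together with the substitution $\kappa=\tau^{1/\alpha}k$ to recover \eqref{NESS_LEVY} and hence \eqref{NESS}. Your additional care about the limit interchanges (continuity theorem) and the admissibility condition $|\alpha\theta|\le 1$ makes explicit what the paper leaves implicit, but the underlying argument is the same.
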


\begin{rmk}
   For $d=1$ ($A_1=A$, $y_1=y$) we identify (\ref{limit-large-lamda}) with (\ref{NESS}) where we consider 
${\cal L}_1(y) = \delta(y-A)$.

For $d=1$, (\ref{well_scales_propa}) takes the form (\ref{NESS}) where ${\cal L}_2(y)$  is Gaussian with mean zero and variance equal to $B$.

As mentioned above, for the Sibuya walk, \eqref{limit_biased_levy}
for $k'= k\tau^{\frac{1}{\alpha}}$ gives (\ref{NESS})  where $\ds {\cal L}_{\alpha}(y)= 
\frac{1}{2\pi}\int_{-\infty}^{\infty} e^{iky} e^{-(ik)^{\alpha}}{\rm d}k$ (index $\alpha \in (0,1)$) is a one-sided stable PDF.
\end{rmk}

\setcounter{equation}{0}
\renewcommand{\theequation}{A.\arabic{equation}}
%
%
%
%
%
%
%
%
\section{Monte Carlo simulation method}
\label{simulation_method}
%
%
%
%
Here we give a brief sketch of the numerical simulation method performed in this paper,  
such as Figs.~\ref{arrivals_def_ber}(b), and \ref{DSP_counting_var_simu}. To obtain a sample realization of  $N_{I}(t)$ or $N_{II}(t)$, we use the following general representation of the survival probability (see \cite{PachonPolitoRicciuti2021})
\beq
\label{survival_general_proba_res}
\Phi^{(0)}(t) = \prod_{k=1}^t (1-\alpha_k) ,\hspace{1cm} t \in \mathbb{N} , \hspace{1cm} \Phi^{(0)}(0) = 1,
\eeq
Representation (\ref{survival_general_proba_res}) is general and holds true for any transient or recurrent renewal process we consider. In this product, $\alpha_t \in [0,1]$ denote the conditional probabilities that an event occurs at time $t$, given no event has occurred up to and including time
$t-1$. The PDF that an event occurs at time $t$ is therefore $\psi(t)= \alpha_t\,\Phi^{(0)}(t-1)$
from which 
\beq
\label{alpha_t_explicit}
\alpha_t= \frac{\psi(t)}{\Phi^{(0)}(t-1)} ,\hspace{1cm} t \in \mathbb{N},
\eeq
with $\alpha_0=0$. 
Let $\mathcal{R} \in [0,1]$ be a random number which is independently generated at each integer time increment. 
A new event occurs at time $t$ (i.e., $t=J_{n+1}$ is a new arrival time) if $\mathcal{R} \leq \alpha_k$ ($k=t-J_n$ is the time delay 
from the previous latest arrival time $J_n<t$ to time $t$).
The method applies to either transient or recurrent renewal processes.
Clearly for the cases in which $ {\cal S}_{\infty} = \sum_{k=1}^{\infty} \alpha_k  < \infty$, one obtains $\Phi^{(0)}(\infty) ={\cal P} >0$ covering the class of transient renewal processes.
On the other hand, for divergent ${\cal S}_{\infty} = \sum_{k=1}^{\infty} \alpha_k = \infty$ the infinite product $\Phi^{(0)}(\infty)=0$ corresponds to recurrent renewal processes.
%
%

For instance, for the DBP we have (see (\ref{DFB-GF}), (\ref{DBP-survival}))
\beq
\label{DFB_alpha}
\alpha_t = \frac{\mathcal{Q}pq^{t-1}}{\mathcal{P}+\mathcal{Q}q^{t-1}} , \hspace{1cm} t \in \mathbb{N}.
\eeq
The RP is Markovian only if $\alpha_t$ is constant. Clearly this is the case only 
in the non-defective limit $\mathcal{Q}=1$ recovering the Bernoulli process for which (\ref{DFB_alpha}) gives $\alpha_k=p$.
The time dependence of (\ref{DFB_alpha}) shows non-markovianity of DBP and $\alpha_t$ decays geometrically in $t$, reflecting transience of the DBP with ${\cal S}_{\infty} < \infty$.
%

%
%
Finally, by a slight modification of the above procedure we introduced a stopping rule to obtain the simulations in Figs.~\ref{M-infinity} and \ref{BerstopsB}.
%

\end{appendix}

\end{document}